\def\bt{\begin{thm}}
\def\et{\end{thm}}
\def\bl{\begin{lem}}
\def\el{\end{lem}}
\def\bd{\begin{defi}}
\def\ed{\end{defi}}
\def\bc{\begin{cor}}
\def\ec{\end{cor}}
\def\bp{\begin{proof}}
\def\ep{\end{proof}}
\def\br{\begin{rem}}
\def\er{\end{rem}}
\newtheorem{thm}{Theorem}[section]
\newtheorem{prop}[thm]{Proposition}
\newtheorem{lem}[thm]{Lemma}
\newtheorem{defn}[thm]{Definition}
\newtheorem{example}[thm]{Example}
\newtheorem{rem}[thm]{Remark}
\newtheorem{cor}[thm]{Corollary}
\numberwithin{equation}{section}
\newcommand{\A}{\mathcal{A}}
\newcommand{\Z}{\Bbb{Z}^m}
\newcommand{\R}{\Bbb{R}^m}
\newcommand{\C}{\Bbb{C}^m}
\newcommand{\Cdn}{\Bbb{C}^{d_N}}
\newcommand{\Pm}{\Bbb{P}^m}
\newcommand{\T}{(\Bbb{C}^*)^m}
\newcommand{\probn}{{\bf{P}_N}}
\newcommand{\p}{{\bf{P}}}
\newcommand{\pbn}{\sigma_N}
\newcommand{\U}{\mathscr{R}}
\newcommand{\la}{\langle}
\newcommand{\ra}{\rangle}
\newcommand{\bthm}{\begin{thm}}
\newcommand{\ethm}{\end{thm}}
\newcommand{\bstp}{\begin{stp}}
\newcommand{\estp}{\end{stp}}
\newcommand{\blem}{\begin{lemma}}
\newcommand{\elem}{\end{lemma}}
\newcommand{\bprop}{\begin{prop}}
\newcommand{\eprop}{\end{prop}}
\newcommand{\bpf}{\begin{pf}}
\newcommand{\epf}{\end{pf}}
\newcommand{\bdefn}{\begin{defn}}
\newcommand{\edefn}{\end{defn}}
\newcommand{\brk}{\begin{rmrk}}
\newcommand{\erk}{\end{rmrk}}
\newcommand{\bcrl}{\begin{crl}}
\newcommand{\ecrl}{\end{crl}}
\title{Zero distribution of random sparse polynomials}
\author{Turgay Bayraktar}
\date{\today}
\address{Mathematics Department, Syracuse University, NY, USA}
\email{tbayrakt@syr.edu}
\keywords{Random polynomial, Newton polytope, distribution of zeros, Bernstein-Kouchnirenko Theorem, amoeba}
\subjclass[2000]{Primary 60D05; Secondary 32U15, 52A22}
\begin{document}

\begin{abstract}
We study asymptotic zero distribution of random Laurent polynomials whose support  are contained in dilates of a fixed integral polytope $P$ as their degree grow.  We consider a large class of probability distributions including the ones induced from i.i.d. random coefficients whose distribution law has bounded density with logarithmically decaying tails as well as moderate measures defined over the projectivized space of Laurent polynomials. We obtain a quantitative localized version of Bernstein-Kouchnirenko Theorem. 
\end{abstract}

\maketitle
\section{Introduction}
Recall that Newton polytope of a Laurent polynomial $f(z_1,\dots,z_m)\in\Bbb{C}[z_1^{\pm 1},\dots,z_m^{\pm 1}]$ is the convex hull (in $\R$) of the exponents of monomials in $f(z).$ It is well-known that for a system $(f_1,\dots,f_m)$ of Laurent polynomials in general position the common zeros is a discrete set in $\T:=(\Bbb{C}\setminus \{0\})^m$ and the number of simultaneous zeros of such a system is given by the mixed volume of Newton polytopes of $f_i's$ \cite{Bernstein,Ko}. In this work, we study asymptotic behavior of zeros of the systems of random Laurent polynomials with prescribed Newton polytope as their degree grow. More precisely, we consider Laurent polynomials whose support are contained in dilates $NP$ for a fixed integral polytope $P\subset\R$ with non-empty interior. Random Laurent polynomials with independent identically distributed (i.i.d.) coefficients whose distribution law is absolutely continuous with respect to Lebesgue measure and has logarithmically decaying tails arise as a special case. In particular, standard real and complex Gaussians are among the examples of such distributions. In another direction moderate measures defined on projectivized space of Laurent polynomials also fall into frame work of this paper.\\ \indent
 Computation of simultaneous zeros of deterministic as well as Gaussian systems of sparse polynomials has been studied by various authors (see eg. \cite{HSt,Rojas,MAR,DGS}) by using mostly methods of algebraic and toric geometry. In this work, we employ methods of pluripotential theory (cf. \cite{SZ1,DS3,BloomS, CM1,BloomL,B6}) which is extensively used in the dynamical study of holomorphic maps (see \cite{FS2} and references therein). Along the way, we develop a pluripotential theory for plurisubharmonic (psh for short) functions which are dominated by the support function of $P$ (up to a constant) in logarithmic coordinates on $\T$. We remark that the class of psh functions that we work with is a generalization of the Lelong class which corresponds here to the special case $P=\Sigma$ where $\Sigma$ is the standard unit simplex in $\R$. 
 For a weighted compact set $(K,q)$ i.e. a nonpluripolar compact set $K\subset \T$ and a continuous weight function $q:\T\to \Bbb{R}$, we define a weighted global extremal function $V_{P,K,q}$ on $\T.$ 
 Then for given integral polytopes $P_i$ with non-empty interior, we show that the mixed complex Monge-Amp\'ere measure $MA_{\Bbb{C}}(V_{P_1,K,q},\dots,V_{P_m,K,q})$ of the extremal functions $V_{P_i,K,q}$ is well defined on $\T$ and is of total mass equal to the mixed volume of $P_1,\dots,P_m.$  We use Bergman kernel asymptotics to prove that the normalized expected zero current along simultaneous zero set of independent random Laurent polynomials converges weakly to the external product $dd^cV_{P_1,K,q}\wedge\dots\wedge dd^cV_{P_k,K,q}$ in any codimension (Theorem \ref{main}). Moreover, if $P\subset \R_{\geq0},$ expected distribution of zeros has a self-averaging property in the sense that almost surely the normalized zero currents are asymptotic to $dd^cV_{P_1,K,q}\wedge\dots\wedge dd^cV_{P_k,K,q}$. In particular, almost surely number of zeros of $m$ independent Laurent polynomials $(f_1,\dots,f_m)$ in an open set $U\Subset \T$ is asymptotic to $N^mMA_{\Bbb{C}}(V_{P_1,K,q},\dots,V_{P_m,K,q})(U)$ (Theorem \ref{sa}). As a result, we obtain a quantitative localized version of Bernstein-Kouchnirenko theorem. In the last section, we obtain a generalization of the above results (Theorem \ref{main2}) for certain unbounded closed subsets $K\subset \T$ and certain weight functions $q.$ Recall that in the latter setting zero distribution of Gaussian Laurent polynomials is studied by Shiffman and Zelditch \cite{SZ1}. More precisely, the setting of \cite{SZ1} corresponds here to the special case $P\subset p\Sigma$ for some $p\in \Bbb{Z}_+,\ K=\T$ and $q(z)=\frac{p}{2}\log(1+\|z\|^2).$   \\ \indent
 For a Laurent polynomial $f$ the \textit{amoeba} $\mathscr{A}_f$ is by definition \cite{GKZ} the image of the zero locus of $f$ under the map $Log(z_1,\dots,z_m)=(\log|z_1|,\dots,\log|z_m|).$ Amoebas are useful tools in several areas such as complex analysis, real algebraic geometry and tropical algebra (see eg. \cite{PasR,FPT,Mi1,Mi3} and references therein). Complex plane curve amoebas were studied by Passare and Rullg{\aa}rd \cite{PasR} in which they proved that area of such amoebas is bounded by a constant times the volume of Newton polytope of $f.$ In certain cases, one can obtain asymptotic distribution of amoebas from our results.
 
\subsection{Statement of results}
Recall that a Laurent polynomial is of the form 
$$f(z)=\sum_J a_Jz^J\in\Bbb{C}[z_1^{\pm1},\dots,z_m^{\pm1}]$$
where $a_J\in\Bbb{C}$ and $z^J:=z_1^{j_1}\dots z_m^{j_m}.$ The set $S_f:=\{J\in\Z:a_J\not=0\}$ is called the \textit{support} of $f$ and convex hull of $S_f$ in $\R$ is called \textit{Newton polytope} of $f.$ For an integral polytope $P$ (i.e. convex hull of a finite subset of $\Z$), we denote the space of Laurent polynomials whose Newton polytope is contained in $P$ by
$$Poly(P):=\{f\in \Bbb{C}[z_1^{\pm1},\dots,z_m^{\pm1}]: S_f\subset P\}$$ Such polynomials are called \textit{sparse polynomials} in the literature. For each $N\in \Bbb{Z}_+$ we denote the $N$-dilate of $P$ by $NP.$ We let  $P_1,\dots, P_m$ denote integral polytopes with non-empty interior and we denote their mixed volume by $D:=MV_m(P_1,\dots,P_m).$ We assume that the mixed volume is normalized so that $MV_m(\Sigma):=MV_m(\Sigma,\dots,\Sigma)=1$ where $\Sigma:=\{t\in \R_{\geq0}: \sum_{j=1}^m t_j=1\}$ denotes the standard unit simplex in $\R.$\\ \indent  
 We are interested in asymptotic patterns of zero distribution of Laurent polynomial systems $(f_N^1,\dots,f_N^m)$ such that $S_{f^i_N}\subset NP_i$ as $N\to\infty.$ It follows from Bernstein-Kouchnirenko theorem \cite{Bernstein,Ko} that for systems in general position the set of common zeros are isolated points in $\T$ and the number of simultaneous roots of the system counting multiplicities is given by $DN^m.$  
      
 For a \textit{weighted compact set} $(K,q)$ i.e. a nonpluripolar compact set $K\subset \T$ and a continuous function $q:\T\to\Bbb{R},$ we define the weighted global extremal function 
 $$V_{P,K,q}:=\sup\{\psi\in Psh(\T): \psi(z)\leq \max_{J\in P}\log|z^J|+C_{\psi}\ \text{on}\ \T\ \text{and}\ \psi\leq q\ \text{on}\ K\}.$$ We remark that in the special case $P=\Sigma,$ the function $V_{\Sigma,K,q}$ coincides with the upper envelope of Lelong class of psh functions defined in \cite[Appendix B]{SaffTotik}.  It follows that $V_{P,K,q}$ is a locally bounded psh function on $\T$ and grows like the support function of $P$ in logarithmic coordinates (see section (\ref{global}) for details). By definition, a weighted compact set $(K,q)$ is regular if $V_{P,K,q}$ is continuous. Throughout this note we assume that $(K,q)$ is a regular weighted compact set. Unit polydisc and round sphere in $\C$ are among the examples of regular compact sets. \\ \indent 
For a measure $\tau$ supported in $K,$ we fix an orthonormal basis (ONB) $\{F^N_j\}_{j=1}^{d_N}$ for $Poly(NP)$ with respect to the inner product 
\begin{equation}\label{inner}
\la f,g\ra:=\int_{K}f(z)\overline{g(z)}e^{-2Nq(z)}d\tau(z). 
\end{equation} Then a Laurent polynomial $f_N$ can be written uniquely as
$$f_N=\sum_{j=1}^{d_N} a_jF^N_j$$ 
where $d_N=\dim(Poly(NP)).$  Throughout this note we assume that the Bergman functions associated with $Poly(P)$ $$B(\tau,q)(z):=\displaystyle\sup_{\|f\|_{L^2(e^{-2q}\tau)}=1}|f(z)|e^{-q(z)}$$  has sub-exponential growth, that is 
$$\sup_{z\in K} B(\tau,Nq)(z)=O(e^{N\epsilon})$$
 for all $\epsilon>0$ and $N\gg1.$  Such measures $\tau$ which always exist on regular weighted compact sets $(K,q)$ when $P\subset \R_{\geq0}$, are called Bernstein-Markov (BM) measures in the literature (see \S3.1 for details). 

 \subsection*{Randomization of $Poly(NP)$} We identify $Poly(NP)$ with $\Bbb{C}^{d_N}$ and endow it with a probability measure $\pbn.$ We remark that the probability space $(Poly(NP),\pbn)$ depends on the choice of ONB (i.e. the unitary identification $Poly(NP)\simeq \Cdn$ given by (\ref{inner})) unless $\sigma_N$ is the Gaussian induced by (\ref{inner}). However, asymptotic distribution of zeros is independent of the choice of this identification (cf Theorems \ref{main} and \ref{sa}). We also remark that our results apply in a quite general setting including random sparse polynomials with independent identically distributed (iid) coefficients whose distribution law has bounded density and logarithmically decaying tails (Proposition \ref{iid}) as well as  moderate measures (Proposition \ref{moderate}) supported on the unit sphere $S^{2d_N-1}$ with respect to the $L^2$ norm induced by (\ref{inner}).  

 It follows from Bertini's theorem that for generic systems $(f_N^1,\dots,f_N^k)$ of Laurent polynomials, their zero locuses are smooth and intersect transversely.  In particular,
$$Z_{f^1_N,\dots,f_N^k}:=\{z\in\T:f^1_N(z)=\dots=f_N^k(z)=0\}$$ 
is smooth and of codimension $k$ in $\T.$ 
We let $[Z_{f^1_N,\dots,f_N^k}]$ denote the current of integration along the zero set $Z_{f^1_N,\dots,f_N^k}.$ For generic systems $(f_N^1,\dots,f_N^k)$ the current $N^{-k}[Z_{f^1_N,\dots,f_N^k}]$ has finite mass on $\T$ bounded by the mixed volume $MV_m(P_1,\dots,P_k,\Sigma,\dots,\Sigma)$ (see Remark \ref{rem}) hence the \textit{expected zero current} 
$$\la\Bbb{E}[Z_{f^1_N,\dots,f_N^k}],\Theta\ra:=\int_{Poly(NP_1)\times\dots\times Poly(NP_k)}\la[Z_{f^1_N,\dots,f_N^k}],\Theta\ra d\sigma_N(f_N^1)\dots d\sigma_N(f_N^k)$$
 is well-defined on test forms $\Theta\in \mathcal{D}_{m-k,m-k}(\T).$  
\begin{thm}\label{main} Let $P_i\subset \R$ be an integral polytope with non-empty interior for each $i=1,\dots, m$ and $(K,q)$ be a regular weighted compact set. 
If
\begin{equation*}
\sup_{u\in S^{2d_N-1}}|\int_{\Bbb{C}^{d_N}}\log|\la a,u\ra|d\sigma_N(a)|=o(N)\ \text{as}\ N\to \infty \tag{$A1$}
\end{equation*}
then for each $1\leq k\leq m$
$$N^{-k}\Bbb{E}[Z_{f_N^1,\dots,f_N^k}] \to dd^c(V_{P_1,K,q})\wedge\dots \wedge dd^c(V_{P_k,K,q})$$ weakly on $\T$ as $N\to \infty.$ 
In particular, expected number of zeros
$$N^{-m}\Bbb{E}[\#\{z\in U: f_N^1(z)=\dots=f_N^m(z)=0\}] \to \int_U MA_{\Bbb{C}}(V_{P_1,K,q},\dots,V_{P_m,K,q})$$ as $N\to \infty$ for every smoothly bounded domain $U\subset\T.$
\end{thm}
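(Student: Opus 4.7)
The plan is to reduce convergence of the expected zero current to convergence of plurisubharmonic potentials and then invoke Bedford--Taylor continuity of the mixed Monge--Amp\`ere operator. Fix an orthonormal basis $\{F_j^{N,i}\}_{j=1}^{d_N^i}$ for $Poly(NP_i)$ with respect to the weighted inner product (\ref{inner}) and write $f_N^i = \sum_j a_j^i F_j^{N,i}$. For $z \in \T$ put $F^{N,i}(z) := (F_1^{N,i}(z),\dots,F_{d_N^i}^{N,i}(z))$ and decompose
$$\log|f_N^i(z)| = \log\|F^{N,i}(z)\| + \log\bigl|\la a^i,\, \overline{F^{N,i}(z)/\|F^{N,i}(z)\|}\ra\bigr|.$$
Taking $\Bbb{E}$ and applying hypothesis $(A1)$ to the unit vector $u(z) = F^{N,i}(z)/\|F^{N,i}(z)\| \in S^{2d_N^i-1}$ gives
$$\Bbb{E}[\log|f_N^i(z)|] = \log\|F^{N,i}(z)\| + g_N^i(z),\qquad \|g_N^i\|_{L^{\infty}(\T)} = o(N).$$
Combined with the weighted logarithmic Bergman kernel asymptotic $N^{-1}\log\|F^{N,i}(\cdot)\| \to V_{P_i,K,q}$ locally uniformly on $\T$, which is a consequence of the Bernstein--Markov hypothesis on $\tau$ together with the extremal characterization of $V_{P_i,K,q}$, this shows that
$$u_N^i(z) := N^{-1}\Bbb{E}[\log|f_N^i(z)|] \longrightarrow V_{P_i,K,q}(z)$$
locally uniformly on $\T$, with each $u_N^i$ a psh function that is locally uniformly bounded.

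For the codimension-$k$ statement, note that $\Bbb{E}[\log|f_N^i|]$ is psh (as a mean of psh functions). By Bertini, a generic system $(f_N^1,\dots,f_N^k)$ has smooth, transversely intersecting zero loci, so Poincar\'e--Lelong gives
$$[Z_{f_N^1,\dots,f_N^k}] = dd^c\log|f_N^1| \wedge \dots \wedge dd^c\log|f_N^k|$$
as a Bedford--Taylor current. Using independence of the $f_N^i$ and iterated integration-by-parts against a test form $\Theta \in \mathcal{D}_{m-k,m-k}(\T)$, each expectation can be transferred inside the corresponding $dd^c$, yielding
$$\Bbb{E}[Z_{f_N^1,\dots,f_N^k}] = dd^c\Bbb{E}[\log|f_N^1|] \wedge \dots \wedge dd^c\Bbb{E}[\log|f_N^k|] = N^{k}\, dd^c u_N^1 \wedge \dots \wedge dd^c u_N^k.$$
Since the $u_N^i$ are locally uniformly bounded psh functions converging locally uniformly to $V_{P_i,K,q}$, the Bedford--Taylor continuity theorem for mixed Monge--Amp\`ere operators yields
$$dd^c u_N^1 \wedge \dots \wedge dd^c u_N^k \longrightarrow dd^c V_{P_1,K,q} \wedge \dots \wedge dd^c V_{P_k,K,q}$$
weakly on $\T$, which is the first assertion after dividing by $N^k$. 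For the second assertion take $k=m$; the limit mixed Monge--Amp\`ere measure charges no smooth real hypersurface (by continuity of the $V_{P_i,K,q}$), so $\partial U$ has zero limit-mass and weak convergence of positive measures upgrades to convergence of masses on the smoothly bounded domain $U$.

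The main obstacle is the interchange of $\Bbb{E}$ with the wedge of $dd^c$ factors in the identity $\Bbb{E}[Z_{f_N^1,\dots,f_N^k}] = \bigwedge_i dd^c \Bbb{E}[\log|f_N^i|]$. Because $\log|f_N^i|$ is $-\infty$ on its zero locus, the Fubini step requires $\log|f_N^i|$ to be absolutely integrable against the $(m,m)$-measure $dd^c\Theta \wedge \bigwedge_{j\neq i} dd^c\log|f_N^j|$ with enough uniformity in the random parameters. This is where $(A1)$ earns its keep: one truncates to $\max(\log|f_N^i|,-M)$, interchanges $\Bbb{E}$ with $dd^c$ at the truncated (bounded) level, and sends $M \to \infty$ using monotone convergence together with the $o(N)$ control supplied by $(A1)$. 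Once this measure-theoretic point is cleared, the remainder of the argument is pure pluripotential theory driven by the Bergman kernel asymptotics.
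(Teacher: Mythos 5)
Your proposal follows essentially the same route as the paper: decompose $\log|f_N^i(z)|=\tfrac12\log S_N^i(z,z)+\log|\langle a^i,u^i_N(z)\rangle|$, use $(A1)$ to kill the second term after averaging, use Proposition~\ref{BS} (Bergman kernel asymptotics) to pass to $V_{P_i,K,q}$, and finish with Bedford--Taylor continuity of the mixed Monge--Amp\`ere operator along locally uniformly convergent sequences. The only organizational difference is that you collapse the codimension-$k$ step into the single identity $\Bbb{E}[Z_{f_N^1,\dots,f_N^k}]=\bigwedge_i dd^c\Bbb{E}[\log|f_N^i|]$ (which is the paper's Proposition~\ref{propex}) and apply Bedford--Taylor once to the psh potentials $u_N^i=N^{-1}\Bbb{E}[\log|f_N^i|]$, whereas the paper runs an induction on $k$ tracking the error currents $T_N^k$ explicitly and restricting to the generic zero locus $Z_{f_N^k}$ at each step. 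These are equivalent in content; your version streamlines the bookkeeping, the paper's makes the Fubini/Stokes exchanges (which both proofs rely on, and which you rightly flag as the delicate point) more visible.

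There is, however, a genuine error in your justification of the final ``In particular'' clause. You claim that the limit measure $MA_{\Bbb{C}}(V_{P_1,K,q},\dots,V_{P_m,K,q})$ ``charges no smooth real hypersurface (by continuity of the $V_{P_i,K,q}$).'' That is false: continuity of a locally bounded psh function guarantees its Monge--Amp\`ere measure puts no mass on \emph{pluripolar} sets, but a smooth real hypersurface in $\T$ is not pluripolar, and in fact by the paper's Proposition~\ref{support} the measure is supported inside the compact set $K$ --- which in the paper's own running example is the sphere $S^3\subset\Bbb{C}^2$, a smooth real hypersurface on which the entire mass sits. So your step from weak convergence of measures to convergence of masses on an arbitrary smoothly bounded $U$ does not go through as written; one needs a separate argument (e.g.\ via regularity of the domain with respect to the limit measure, or by sandwiching $U$ between domains whose boundaries carry no mass), and the paper itself leaves this corollary largely unargued.
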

Here $MA_{\Bbb{C}}(V_{P_1,K,q},\dots,V_{P_m,K,q})$ denotes the mixed complex Monge-Amp\'ere of the extremal functions $V_{P_1,K,q},\dots,V_{P_m,K,q}$ (see \S \ref{CMA} for details). 

In the special case $P\subset p\Sigma$ for some $p\in\Bbb{Z}_+,$ we can identify $Poly(NP)$ with a subspace $\Pi_{NP}$ of $H^0(\Bbb{P}^m,\mathcal{O}(pN))$ where $\mathcal{O}(1)\to \Bbb{P}^m$ denotes the hyperplane bundle on the complex projective space $\Bbb{P}^m.$ Then we consider the product space $\mathscr{P}=\prod_{N=1}^{\infty}\Pi_{NP}$ endowed with the product measure. Thus, elements of $\mathscr{P}$ are random sequences of global holomorphic sections of powers of $\mathcal{O}(p)$. Next, we obtain the following self averaging property of random zero currents.
\begin{thm}\label{sa}
Let $P_i\subset \R_{\geq0}$ be an integral polytope with non-empty interior  for each $i=1,\dots, m$ and $(K,q)$ be a regular weighted compact set.
If 
\begin{equation*}
\sum_{N=1}^{\infty}\sigma_N(a\in \Bbb{C}^{d_N}:\log\|a\|>N\epsilon)<\infty \ \text{for every}\ \epsilon>0 \tag{$A2$}
\end{equation*}
and for every $u\in S^{2d_N-1}$
\begin{equation*}
\sum_{N=1}^{\infty}\sigma_N(a\in \Bbb{C}^{d_N}:\log|\la a,u\ra|<-N t)<\infty \ \text{for every}\ t>0 \tag{$A3$}
\end{equation*}
  then for each $1\leq k\leq m$ almost surely
$$N^{-k}[Z_{f_N^1,\dots,f_N^k}] \to dd^c(V_{P_1,K,q})\wedge\dots \wedge dd^c(V_{P_k,K,q})$$ weakly on $\T$ as $N\to\infty.$
\end{thm}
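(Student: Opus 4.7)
The plan is to reduce the almost-sure weak convergence of currents to almost-sure $L^1_{loc}$-convergence of the normalized log-moduli $u_N^i := N^{-1}\log|f_N^i|$ toward $V_{P_i,K,q}$, and then to obtain the latter by feeding hypotheses $(A2)$ and $(A3)$ into a Borel-Cantelli argument built on top of the Bergman-kernel asymptotics already underlying Theorem \ref{main}. By the Poincar\'e-Lelong formula applied to a generically transverse intersection,
$$N^{-k}[Z_{f_N^1,\dots,f_N^k}] \,=\, dd^c u_N^1 \wedge \cdots \wedge dd^c u_N^k,$$
so continuity of the mixed Monge-Amp\`ere operator along $L^1_{loc}$-convergent psh sequences with continuous limits (the same pluripotential framework driving Theorem \ref{main}) reduces the statement, via the independence of the $f_N^i$, to proving that $u_N^i\to V_{P_i,K,q}$ in $L^1_{loc}(\T)$ almost surely for each fixed $i$.

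For the upper bound, writing $f_N=\sum_j a_j F^N_j$ and applying Cauchy-Schwarz yield
$$u_N(z)\,\leq\, \tfrac{1}{N}\log\|a\|\,+\,\tfrac{1}{2N}\log K_N(z,z),\qquad K_N(z,z):=\sum_j|F^N_j(z)|^2.$$
The second term converges locally uniformly to $V_{P,K,q}$ by the Bergman-kernel asymptotics and the Bernstein-Markov property (both valid since $P\subset\R_{\geq 0}$), while $(A2)$ with Borel-Cantelli forces $\tfrac{1}{N}\log\|a\|\to 0$ almost surely; hence $\limsup_N u_N \leq V_{P,K,q}$ locally uniformly a.s.

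For the matching pointwise lower bound on a countable dense set $D\subset\T$, I would decompose
$$u_N(z)\,=\,\tfrac{1}{N}\log|\la a,u^N_z\ra|\,+\,\tfrac{1}{2N}\log K_N(z,z),\qquad u^N_z:=\frac{(F^N_j(z))_j}{\sqrt{K_N(z,z)}} \in S^{2d_N-1},$$
and apply hypothesis $(A3)$ to the unit vector $u^N_z$. Borel-Cantelli then gives $\liminf_N \tfrac{1}{N}\log|\la a,u^N_z\ra|\geq 0$ almost surely for each fixed $z\in D$; intersecting the countably many full-measure events yields $u_N(z)\to V_{P,K,q}(z)$ simultaneously for all $z\in D$ with probability one.

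To upgrade from pointwise convergence on $D$ to $L^1_{loc}$-convergence, I would use the standard compactness of psh functions: the locally uniform upper bound makes $\{u_N\}$ precompact in $L^1_{loc}$, and any psh subsequential limit $v$ satisfies $v\leq V_{P,K,q}$ from the upper-bound step and $v(z_k)\geq \limsup_N u_N(z_k) = V_{P,K,q}(z_k)$ for $z_k\in D$ by Hartogs' inequality $\limsup_N u_N \leq v$ everywhere. Upper semicontinuity of $v$ combined with continuity of $V_{P,K,q}$ and density of $D$ then forces $v= V_{P,K,q}$ everywhere, and uniqueness of the subsequential limit promotes this to full-sequence convergence. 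The chief obstacle in carrying out the program is the continuity-of-Monge-Amp\`ere step in the reduction: the $u_N^i$ are \emph{not} locally bounded (they dive to $-\infty$ along $Z_{f_N^i}$), so the weak convergence of $dd^c u_N^1\wedge\cdots\wedge dd^c u_N^k$ to $dd^c V_{P_1,K,q}\wedge\cdots\wedge dd^c V_{P_k,K,q}$ has to be justified inside the pluripotential framework for psh functions dominated by the support function of $P$ developed earlier in the paper, rather than by classical Bedford-Taylor continuity for locally bounded sequences.
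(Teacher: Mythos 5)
Your $k=1$ argument matches the paper's proof almost verbatim: use $(A2)$ and Borel--Cantelli together with the Bergman kernel asymptotics to get the upper bound, use $(A3)$ pointwise at a countable dense set to get the lower bound, and close with Hartogs' lemma and a continuity/contradiction argument.

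The gap, as you partially sense yourself, is the reduction for $k\geq 2$. You write that almost-sure $L^1_{loc}$ convergence of each $u^i_N:=N^{-1}\log|f^i_N|$ to $V_{P_i,K,q}$ should, ``by continuity of the mixed Monge--Amp\`ere operator along $L^1_{loc}$-convergent psh sequences,'' yield weak convergence of $dd^cu^1_N\wedge\cdots\wedge dd^cu^k_N$. No such continuity principle exists. The mixed Monge--Amp\`ere operator is continuous along monotone sequences (Bedford--Taylor) or locally uniformly convergent sequences of locally bounded psh functions, but it is \emph{not} continuous along merely $L^1_{loc}$-convergent sequences of unbounded psh functions — and your $u^i_N$ are precisely of this type, with poles along $Z_{f^i_N}$. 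You flag this obstacle at the very end, but your suggested fix (justify it ``inside the pluripotential framework for functions dominated by the support function of $P$'') does not correspond to anything the paper's $\mathcal{L}_P$-theory actually provides; that theory gives the Siciak--Zaharyuta theorem, Bergman asymptotics and mass formulas, not a continuity theorem for the mixed Monge--Amp\`ere of unbounded, $L^1_{loc}$-convergent psh sequences.

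The paper's actual route for $k>1$ avoids this continuity issue entirely: it compactifies on $\mathbb{P}^m$ (this is where the hypothesis $P_i\subset\R_{\geq 0}$ is used to embed $Poly(NP_i)$ in $H^0(\mathbb{P}^m,\mathcal{O}(pN))$), and then invokes the Dinh--Sibony slicing and regularization machinery: the wedge product of $[Z_{f^1_N}]$ with a $\theta$-regularization $[Z_{f^2_N,\dots,f^k_N}]_\theta$ defines, as a function of $\theta\in\Delta$, a \emph{continuous subharmonic} function $u_N(\theta)$ (Proposition \ref{shc}). The induction hypothesis and Cauchy--Schwarz give $\limsup_N u_N\leq v$ where $v(\theta)=\langle T_{P,K,q}\wedge(T_{P,K,q}^{k-1})_\theta,\Phi\rangle$; one then gets equality for $\theta\in\Delta^*$ because the regularized current is \emph{smooth} there and one can pair against the weakly converging $N^{-1}[Z_{f^1_N}]$; and finally the subharmonicity of $u_N$ in $\theta$ and continuity of $v$ force the equality at $\theta=0$, which is exactly the claimed current convergence. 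None of this is a ``continuity of Monge--Amp\`ere'' statement; it is a complex-analytic extension argument across a pluripolar set in the auxiliary $\theta$-disc. If you want to prove the $k>1$ case you must supply an argument of this structure (or some genuine replacement) — the Poincar\'e--Lelong plus Monge--Amp\`ere-continuity reduction you propose does not go through.
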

In particular, when $k=m,$ it follows from Proposition \ref{mixma} that the total mass
$$\int_{\T}MA_{\Bbb{C}}(V_{P_1,K,q},\dots,V_{P_m,K,q})=MV_m(P_1,\dots,P_m).$$ 
Hence, almost surely the number of zeros in a domain $U\subset\T$ of $m$ independent random Laurent polynomials is asymptotic to $N^mMA_{\Bbb{C}}(V_{P_1,K,q},\dots,V_{P_m,K,q})(U).$ Thus, Theorem \ref{sa} gives a quantitative localized version of the Bernstein-Kouchnirenko theorem.
\subsection{Comparison with the results in the literature} 
Recall that a random Kac polynomial is of the form
$$f_N(z)=\sum_{j=0}^Na_jz^j$$ where coefficients $a_j$ are independent complex Gaussian random variables of mean zero and variance one. A classical result due to Kac and Hammersley \cite{Kac,Ham} asserts that normalized zeros of Kac random polynomials of large degree tend to accumulate on the unit circle $S^1=\{|z|=1\}.$ This ensemble of random polynomials has been extensively studied (see eg. \cite{LO,HN,SV,IZ}). Recently, Ibragimov and Zaporozhets \cite{IZ} proved that 
$$\Bbb{E}[\log(1+|a_j|)]<\infty$$ is a necessary and sufficient condition for zeros of random Kac polynomials to accumulate near the unit circle (see also the recent work \cite{TaoVu2} on local universality of zeros). In \cite{SZ3}, Shiffman and Zelditch remarked that it was an implicit choice of an inner product (see (\ref{inner})) that produced this concentration of zeros of Kac polynomials around the unit circle $S^1.$ More generally they proved that for a simply connected domain $\Omega\Subset\C$ with real analytic boundary $\partial\Omega$ and a fixed ONB $\{F_j^{N}\}_{j=1}^{n+1},$ zeros of random polynomials with i.i.d standard complex Gaussian coefficients $$f_N(z)=\sum_{j=1}^{N+1}a_jF^{N}_j(z)$$ concentrate near the boundary $\partial\Omega$ as $N\to \infty.$

\par Asymptotic zero distribution of multivariate random polynomials has been studied by several authors (see eg. \cite{SZ,SZ1,BloomS,Shiffman,DS3,BloomL,B6} and references therein). In particular, if the random coefficients $a_J$ in $f^i_N$ are i.i.d. standard complex Gaussian then we recover  \cite[Theorem 3.1]{BloomS})  (see also \cite[Theorem 7.3]{BloomL} and \cite[Theorem 1.2]{B6} for more general distributions).  On the other hand, Dinh and Sibony \cite{DS3} studied equidistribution problem by using formalism of meromorphic transforms. They considered moderate measures on the pojectivized space $\Bbb{P}Ploy(N\Sigma)$ which arise here as a special case. Recall that  Monge-Amp\`ere measure of a H\"{o}lder continuous qpsh function is among the examples of moderate measures (see \cite{DNS} for details).\\ \indent 
Theorem \ref{main} and \ref{sa} can be also considered as a global universality results in the sense that they extend some earlier known results for the Gaussian distributions to setting of distributions that has logarithmically decaying tails. For instance, letting $K=(S^1)^m$ the real torus and $q(z)\equiv0,$ we see that the monomials $\{z^J\}_{J\in NP\cap\Z}$ form an ONB for $Poly(NP)$ with respect to the normalized Lebesgue measure on the real torus. Moreover, endowing $Poly(NP)$ with complex (or real) Gaussian distribution with mean zero and a (positive definite and diagonal) variance matrix $C$ for each $1\leq k\leq m$ we observe that
$$N^{-k}\Bbb{E}[Z_{f_N^1,\dots,f^k_N}]=\omega_{NP_1}\wedge \dots \wedge \omega_{NP_k}$$ where $\omega_{NP_i}=\frac12dd^c\sum_{J\in NP_i\cap\Z}\log|z^J|^2$ is a K\"ahler form for sufficiently large $N$ and we obtain \cite[Theorem 2]{MAR}.  Then Example \ref{torus2} 
together with Theorem \ref{main} yields $$N^{-k}\Bbb{E}[Z_{f_N^1,\dots,f^m_N}]\to \frac{MV_m(P_1,\dots,P_m)}{(2\pi)^m}d\theta_1\dots d\theta_m\ \text{weakly as}\ N\to\infty$$
hence, we recover \cite[Theorem 1.8]{DGS}.

\par Next, we provide the following example to illustrate the impact of the choice of $(P,K)$ on zero distribution:
\begin{example}
Let $P=Conv((0,0),(0,1),(1,1),(T,0))\subset \Bbb{R}^2$ 
$$
\begin{tikzpicture} 
    \draw[fill=gray!50!white] plot[smooth,samples=100,domain=0:1](\x,{0}) -- 
    plot[smooth,samples=100,domain=1:0] (\x,{1});
\draw[fill=gray!50!white] plot[smooth,samples=100,domain=1:5](\x,{-0.25*\x+1.25}) -- 
    plot[smooth,samples=100,domain=1:0] (\x,{0});
    \node at (1,.5) {$P$};
     \draw[->] (-0.2,0) -- (6,0) node[right] {$x$};
    \draw[->] (0,-0.2) -- (0,2.5) node[above] {$y$};
    \draw (0,1) -- (1,1);
    \draw (1,1) -- (5,0);
   \fill (0,0) circle (2pt); 
   \fill (5,0) circle (2pt) node[above] {(T,0)};
    \fill (0,1) circle (2pt) node[left] {(0,1)};
     \fill (1,1) circle (2pt) node[above] {(1,1)};
      \end{tikzpicture}$$
where $T\geq 2$ is an integer and $K=S^3$ is the unit sphere in $\Bbb{C}^2.$ Then taking $q\equiv0$ we see that
$$c_Jz^J:=(\frac{(j_1+j_2+1)!}{j_1!j_2!})^{\frac12}z_1^{j_1}z_2^{j_2}\ \text{for}\ J=(j_1,j_2)\in NP$$
form an ONB for $Poly(NP)$ with respect to the inner product induced from $L^2(\sigma)$ where $\sigma$ is the probability surface area measure on $S^3.$ Then a random sparse polynomial is of the form 
\begin{equation}\label{rep} f_N(z)=\sum_{J\in NP}a_Jc_Jz^J.
\end{equation}
and by Theorem \ref{sa} almost surely
$$N^{-2}\sum_{\zeta\in Z_{f_N^1,f_N^2}}\delta_{\zeta}\to MA_{\Bbb{C}}(V_{P,K}).$$  
weakly as $N\to \infty$ where the measure $MA_{\Bbb{C}}(V_{P,K})$ is the complex Monge-Amp\'ere of the unweighted (i.e. $q\equiv0$) global extremal function $V_{P,K}$. By Proposition \ref{support}  the measure $MA_{\Bbb{C}}(V_{P,K})$ is supported in $S^3$. However, unlike the case $P=\Sigma$ the mass of $MA_{\Bbb{C}}(V_{P,K})$ is not uniformly distributed on $S^3$ (see Figures \ref{pic1} and \ref{pic2} below).\\ \indent 
 Figures \ref{pic1} and \ref{pic2} illustrate zero distribution of independent system of two random polynomials of the form (\ref{rep}) whose coefficients are complex i.i.d. standard Gaussian respectively Pareto-distributed with $T=5$ and $N=10$. 
\begin{figure}[ht]\centering
   \begin{minipage}{0.45\linewidth}
     \frame{\includegraphics[width=1\textwidth]{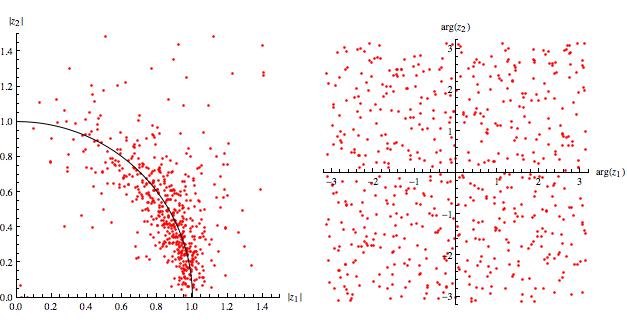}}
     \caption{Standard Gaussian }\label{pic1}
   \end{minipage}
   \quad
   \begin{minipage}{0.45\linewidth}
     \frame{\includegraphics[width=1\textwidth]{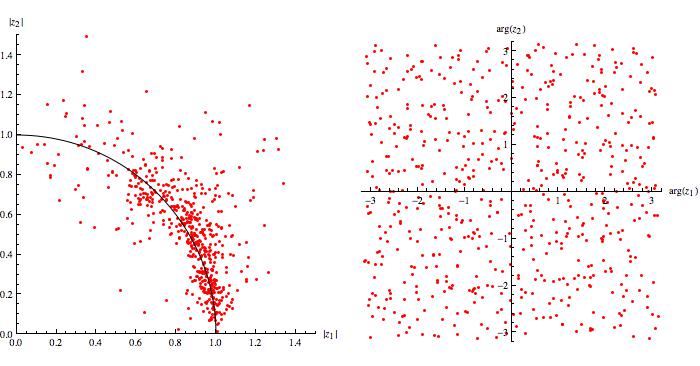}}
     \caption{Pareto distribution with $\p\{|a|>R\}\sim R^{-3}$}\label{pic2}
   \end{minipage}
   \end{figure}
\end{example}
In the last part of this work, we obtain a generalization of Theorem \ref{main} for certain unbounded closed sets $K\subset \T$ and weakly admissible weight functions $q$ (see $\S5$ for details):
\begin{thm}\label{main2}
Let $P_i\subset \R_{\geq0}$ be an integral polytope with non-empty interior and $(K,q_i)$ be a regular weighted  closed set with $q_i:\T\to\Bbb{R}$ be weakly admissible continuous weight function for each $i=1,\dots,k\leq m$. Assume that conditions $(A1),(A2)$ and $(A3)$ hold then
$$N^{-k}\Bbb{E}[Z_{f^1_N,\dots,f^k_N}] \to dd^c(V_{P_1,K,q_1})\wedge\dots \wedge dd^c(V_{P_k,K,q_k})$$ weakly as $N\to \infty.$ Moreover, almost surely 
$$N^{-k}[Z_{f_N^1,\dots,f_N^k}]\to dd^c(V_{P_1,K,q_1})\wedge\dots \wedge dd^c(V_{P_k,K,q_k})$$ weakly on $\T$  as $N\to \infty.$
\end{thm}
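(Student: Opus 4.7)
The plan is to follow the strategy of Theorems \ref{main} and \ref{sa}, reducing the statement to convergence of scalar Bergman kernel potentials via the Poincar\'e-Lelong formula, and then adapting that analysis to handle the unbounded closed set $K$ and the weakly admissible weights $q_i$. The core identity I would exploit is that for an ONB $\{F_j^N\}_{j=1}^{d_N}$ of $Poly(NP_i)$ and any $f_N^i = \sum_j a_j F_j^N$, one can decompose
$$\frac{1}{N}\log |f_N^i(z)| - q_i(z) = \frac{1}{N}\log \left|\la a, u_N^i(z)\ra\right| + \frac{1}{N}\log B(\tau, Nq_i)(z),$$
where $u_N^i(z)$ is the unit vector in $\Cdn$ obtained by normalizing $(\overline{F_j^N(z)})_j$. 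Combined with the Poincar\'e-Lelong formula $N^{-1}[Z_{f_N^i}] = dd^c(N^{-1}\log|f_N^i|)$, this reduces the problem to controlling the two terms on the right.

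First I would establish the weighted Bergman kernel asymptotics
$$\frac{1}{N}\log B(\tau, Nq_i)(z) \longrightarrow V_{P_i, K, q_i}(z)$$
locally uniformly on $\T$. In the bounded regular case of Theorem \ref{main} this comes from the Bernstein-Markov hypothesis combined with an $L^2$ extremal characterization of $V_{P,K,q}$; in the unbounded weakly admissible setting, weak admissibility controls the growth of $q_i$ at infinity well enough that one can localize to a compact exhaustion of $K$, run the bounded theory there, and pass to the limit using the growth characterization of $V_{P,K,q}$ as a majorant of the support function of $P_i$.

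Second, assuming Step 1, condition $(A1)$ gives $\Bbb{E}[\frac{1}{N}\log|\la a, u_N^i(z)\ra|] = o(1)$ uniformly in $z$, yielding $\frac{1}{N}\Bbb{E}[\log|f_N^i|] - q_i \to V_{P_i, K, q_i}$ in $L^1_{loc}(\T)$. Applying $dd^c$ handles the codimension-one case. For higher codimension, local boundedness of each $V_{P_i, K, q_i}$ (a consequence of weak admissibility) ensures that $dd^c V_{P_1,K,q_1}\wedge\cdots\wedge dd^c V_{P_k,K,q_k}$ is well-defined via Bedford-Taylor, and the inductive slicing/intersection argument from Theorem \ref{main}, which rests on the independence of the $f_N^i$'s, passes through. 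For the almost sure statement, $(A2)$ together with the trivial estimate $|\la a, u\ra|\leq\|a\|$ controls the upper tail of $\frac{1}{N}\log|\la a, u_N^i(z)\ra|$, while $(A3)$ combined with Borel-Cantelli controls the lower tail at each fixed $z$; the psh function compactness arguments from the proof of Theorem \ref{sa} then upgrade pointwise control to almost sure $L^1_{loc}$ convergence, after which wedging as above gives pathwise convergence of $N^{-k}[Z_{f_N^1,\dots,f_N^k}]$.

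The main obstacle will be Step 1: the standard proof of Bergman kernel asymptotics uses compactness of $K$ in an essential way through Bernstein-Markov, so one must verify that the supremum defining $B(\tau, Nq_i)(z)$ is essentially attained on a compact subset of $K$ whose size is governed by the growth of $q_i$, and that the weakly admissible extremal characterization of $V_{P,K,q_i}$ is compatible with this truncation (no loss in the class of admissible competitors $\psi$). Once Step 1 is in place, the remaining arguments are essentially direct adaptations of the compact-case proofs of Theorems \ref{main} and \ref{sa}.
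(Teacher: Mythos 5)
Your overall plan follows the paper's Section 5 exactly: extend the Bergman kernel asymptotics to the weakly admissible, unbounded setting and then replay the proofs of Theorems \ref{main} and \ref{sa}. The paper resolves your ``main obstacle'' slightly differently than your proposed compact-exhaustion scheme --- it takes the Bernstein-Markov measure to be a probability volume form $dV$ on $\T$, notes that $dV$ satisfies the weighted Bernstein-Markov inequality on $\T$ (weak admissibility precisely controls the growth at infinity), and then cites the argument of \cite{BloomS} (cf.\ \cite[Proposition 4.2]{SZ1}); both routes lead to the same Bergman asymptotics, so this is a difference in implementation rather than in strategy.

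There is, however, a normalization error in Step 1 that would propagate to a wrong limit. You claim $\frac{1}{N}\log B(\tau, Nq_i)(z) \to V_{P_i,K,q_i}(z)$, but with the paper's definition $B(\tau, Nq_i)(z) = \sqrt{S_N(z,z)}\,e^{-Nq_i(z)}$, so
$$\frac{1}{N}\log B(\tau, Nq_i)(z) = \frac{1}{2N}\log S_N(z,z) - q_i(z),$$
and the correct asymptotic (Proposition in Section 5, extending Proposition \ref{BS}) is $\frac{1}{2N}\log S_N(z,z)\to V_{P_i,K,q_i}(z)$ locally uniformly, i.e.\ $\frac{1}{N}\log B(\tau, Nq_i)\to V_{P_i,K,q_i}-q_i$. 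Plugging your version into your (otherwise correct) decomposition $\frac{1}{N}\log|f_N^i| - q_i = \frac{1}{N}\log|\la a, u_N^i\ra| + \frac{1}{N}\log B(\tau, Nq_i)$ and applying $dd^c$ yields $N^{-1}\Bbb{E}[Z_{f_N^i}]\to dd^c V_{P_i,K,q_i}+dd^c q_i$, an extra $dd^c q_i$ which is nonzero here (indeed $q_i$ is a nonconstant $p\omega_{FS}$-psh weight). Once you replace the Step 1 claim by $\frac{1}{2N}\log S_N(z,z)\to V_{P_i,K,q_i}$, the rest of your argument --- Poincar\'e-Lelong, $(A1)$ for the expectation, $(A2)/(A3)$ plus Borel-Cantelli and Hartogs compactness for the almost sure statement, and the inductive slicing via \cite{DS11} --- goes through and matches the paper.
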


In the special case, $P_i\subset p\Sigma$ for some $p\in\Bbb{Z}_+$ and $K=\T$ together with $q(z)=\frac{p}{2}\log(1+\|z\|^2)$  zero distribution of random Laurent polynomials with i.i.d. standard complex Gaussian coefficients is studied by Shiffman and Zelditch \cite{SZ1,Shiffman}. It follows from \cite[Theorem 4.1]{SZ1} that $V_{K,P_i,q}$ is continuous on $\T,$ in particular $(K,q)$ is a regular weighted set (see Example \ref{SZex} for details). Hence, Theorem \ref{main2} applies in this setting and we recover \cite[Theorem 1.4]{SZ1} and \cite[Theorem 1.5]{Shiffman}. Specializing further, if $P:=P_1=\dots=P_m$ by Proposition \ref{support} we see that asymptoticly zeros of random polynomials concentrate in the region $\mathcal{A}_P:=\mu_p^{-1}(P^{\circ})$ which is called \textit{classically allowed region} in \cite{SZ1}, where  
$$\mu_p:\T\to\R$$
$$\mu_p(z)=(\frac{p|z_1|^2}{1+\|z\|^2},\dots,\frac{p|z_m|^2}{1+\|z\|^2}).$$

\begin{example}
Let $P=Conv((0,0),(0,1),(1,1),(1,0))\subset \Bbb{R}^2$ be the unit square.    
$$
\begin{tikzpicture} 
    \draw[fill=gray!50!white] plot[smooth,samples=100,domain=0:1](\x,{0}) -- 
    plot[smooth,samples=100,domain=1:0] (\x,{1});
    \node at (.5,.5) {$P$};
     \draw[->] (-0.2,0) -- (3,0) node[right] {$x$};
    \draw[->] (0,-0.2) -- (0,3) node[above] {$y$};
    \draw (0,1) -- (1,1);
    \draw (1,1) -- (1,0);
    \draw (2,0)--(0,2);
\node[below] at (2,0) {2};
    \node[below] at (1,0) {1};
    \node[left] at (0,2) {2};
    \node[left] at (0,1) {1};
     \end{tikzpicture}$$
We also let $K=(\Bbb{C}^*)^2$ and $q(z)=\log(1+\|z\|^2)$ (i.e. $p=2$). It follows form \cite[Example 1]{SZ1} that the classically allowed region is given by $$\mathcal{A}_P=\{(z_1,z_2)\in\T: |z_1|^2-1<|z_2|^2<|z_1|^2+1\}$$ and 
\begin{equation}\label{cform}
V_{P,K,q} (z_1,z_2)= \begin{cases}
\log(1+\|z\|^2) & \text{for}\ z\in \mathcal{A}_P\\
\frac12\log|z_2|^2+\frac12\log(1+|z_1|^2)+\log2  & \text{for}\ |z_2|^2\geq |z_1|^2+1\\
\frac12\log|z_1|^2+\frac12\log(1+|z_2|^2)+\log2  & \text{for}\ |z_1|^2\geq |z_2|^2+1
\end{cases}
\end{equation}
Hence, $(K,q)$ is a regular weighted closed set and Theorem \ref{sa} applies. Moreover, 
$$c_Jz^J:=(\frac{(N+2)!}{2!(N-|J|)!j_1!\dots j_2!})^{\frac12}z_1^{j_1}z_2^{j_2}$$ form an ONB for $Poly(NP)$ with respect to the inner product
\begin{eqnarray*}
\la f,g\ra: &=&\int_{(\Bbb{C}^*)^2}f(z)\overline{g(z)}e^{-2Nq(z)}\omega_{FS}^2 \\ &=& \int_{(\Bbb{C}^*)^2}f(z)\overline{g(z)} \frac{2}{\pi^2(1+\|z\|^2)^{2N+3}}dz. 
\end{eqnarray*}
Thus a random polynomial in the present setting is of the form
\begin{equation}\label{formm}
f_N(z)=\sum_{J\in NP} a_Jc_Jz^J 
\end{equation}
and by Theorem \ref{main2} almost surely 
$$N^{-2}\sum_{\zeta\in Z_{f_N^1,f_N^2}}\delta_{\zeta}\to 1_{\mathcal{A}}\frac{2}{\pi^2(1+\|z\|^2)^3}dz. $$
\end{example}

\subsection{Connection with toric varieties}
Recall that an integral polytope $P\subset\R$ is called Delzant if a neighborhood of any vertex of $P$ is $SL(m,\Bbb{Z})$ equivalent to $\{x_i\geq0:i=1,\dots,m\}\subset\R.$ A theorem of Delzant asserts that if $P$ is an integral Delzant polytope then one can construct a toric variety  $X_P$ which is a projective manifold and an ample line bundle $L\to X_P$ such that 
$\frac12dd^c\sum_{J\in NP\cap\Z}\log|z^J|^2$ is a K\"ahler metric on $\T$ and it extends to a smooth global K\"ahler metric on the toric variety $X_P$ for sufficiently large $N.$ Moreover, the space of global holomorphic sections $H^0(X_P,L^{\otimes N})$ can be identified with $Poly(NP).$ 
In this setting, the asymptotic distribution of zeros was obtained in \cite[Theorem 1.1]{B6} (see also \cite{SZ} for the Gaussian setting). 

\section{Preliminaries}
\subsection{Lattice points, polytopes and convex analysis}
In what follows $\R_+$ (respectively $\R_{\geq0}$ denotes the set of points in the real Euclidean space with positive (respectively non negative) coordinates. By an integral polytope we mean convex hull $Conv(\A)$ in $\R$ of a non-empty finite set $\A\subset\Z.$ We let $\Sigma$ denote the standard unit simplex that is $\Sigma=Conv(0,e_1,\dots,e_m)$ where $e_i$ denote the standard basis elements in $\Z.$ For two non-empty convex sets $P_1,P_2$ we denote their Minkowski sum by 
$$P_1+P_2:=\{x_1+x_2:x_1\in P_1,x_2\in P_2\}.$$
In the present section, we let $P\subset \R$ be a convex body i.e. a compact convex set with non-empty interior $P^{\circ}$. Let $Vol_m$ denote the volume of a subset of $\R$ with respect to Lebesgue measure which is normalized such that $Vol_m(\Sigma)=\frac{1}{m!}.$ \\ \indent
A theorem by Minkowski and Steiner asserts that $Vol_m(N_1P_1 + \dots + N_kP_k)$ is a
homogeneous polynomial of degree $m$ in the variables $N_1, \dots , N_k \in \Bbb{Z}_+$ (see for instance \cite[\S 4]{CLO} for details). 
In the special case $k=m,$ the coefficient of the monomial $N_1\cdots N_m$ in the homogenous expansion of $Vol_m(N_1P_1+\dots+N_mP_m)$ is called \textit{mixed volume} of $P_1,\dots,P_m$ and denoted by 
$MV_m(P_1,\dots,P_m).$  
One can compute the mixed volume of convex sets $P_1,\dots,P_m$ by means of polarization formula
$$MV_m(P_1,\dots,P_m)=\sum_{k=1}^m\sum_{1\leq j_1\leq\dots\leq j_k\leq m}(-1)^{m-k}Vol_m(P_{j_1}+\dots+P_{j_k}) .$$In particular, if $P=P_1=\dots=P_m$ then 
$$MV_m(P):=MV_m(P,\dots,P)=m!Vol_m(P).$$
In the special case, $MV_m(\Sigma)=1.$

We denote the \textit{support function} of a convex body $P$ by
$\varphi_P:\R\to\Bbb{R}$
$$\varphi_P(x)=\sup_{p\in P}\langle x,p\rangle$$
which is a one-homogenous convex function. We let $d\varphi_{|x}$ denote the \textit{sub-gradient} of $\varphi$ at $x\in \R.$ Recall that $d\varphi_{|x}$ is a closed convex set in $\R$ defined by 
$$d\varphi_{|x}:=\{p\in\R: \varphi(y)\geq \varphi(x)+\langle p,y-x\rangle \ \text{for every}\ y\in\R\}.$$  
We remark that if $\varphi$ is differentiable at $x$ then $d\varphi_{|x}$ is a point and coincides with $\nabla\varphi(x)$. In the sequel we let $d\varphi(E)$ denote the image of $E\subset\R$ under the sub-gradient. 
\subsubsection{Real Monge-Amp\'ere of a convex function}
Following \cite{BAR}, we define \textit{real Monge-Amp\'ere} (or Monge-Amp\'ere in the sense of Aleksandrov) of a finite convex function $\varphi$ by
\begin{equation}\label{defn}
MA_{\Bbb{R}}(\varphi)(E):=m!\int_{d\varphi(E)}dVol_m
\end{equation}
where $E\subset\R$ is a Borel set. The role of normalization constant $m!$ will be explained in (\ref{CMA}). If $\varphi\in \mathcal{C}^2(\R)$ then its real Monge-Amp\'ere coincides with its Hessian that is 
\begin{eqnarray}
MA_{\Bbb{R}}(\varphi)(E)
&=&m!\int_E\det(\frac{\partial^2\varphi}{\partial x_ix_j})dx \label{eq}.
\end{eqnarray} 
Moreover, for a convex function $\varphi\in \mathcal{C}^2(\R)$ one can also define the \textit{real Monge-Amp\'ere} as
$$\mathcal{MA}_{\Bbb{R}}(\varphi):=d(\varphi_{x_1})\wedge\dots \wedge d(\varphi_{x_m})$$ where $\varphi_{x_i}:=\frac{\partial\varphi}{\partial x_i}.$
In fact, endowing the cone of convex functions with the topology of  locally uniform convergence and the space of measures on $\R$ by topology of weak converge it follows from \cite{BAR} that the operator $\mathcal{MA}_{\Bbb{R}}$ extends as a continuous symmetric  multilinear operator and the equality 
$$MA_{\Bbb{R}}(\varphi)=\mathcal{MA}_{\Bbb{R}}(\varphi)$$
 remains valid for merely convex functions $\varphi.$ 
Finally,  following \cite{PasR} one can define \textit{mixed real Monge-Amp\'ere} of convex functions $\varphi_1,\dots,\varphi_m$ by means of the polarization formula 

\begin{equation}\label{rpol}
MA_{\Bbb{R}}(\varphi_1,\dots,\varphi_m):=\frac{1}{m!}\sum_{k=1}^m\sum_{1\leq j_1\leq\dots\leq j_k\leq m}(-1)^{m-k}MA_{\Bbb{R}}(\varphi_{j_1}+\dots+\varphi_{j_k}).
\end{equation}
The following result provides a key link between mixed volume and the (mixed) real Monge-Amp\'ere operator. We refer the reader to \cite[Proposition 3]{PasR} and \cite[Lemma 2.5]{BerBo} for the proof.
\begin{prop}\label{PR1}
Let $P_i\subset\R$ be a convex body and $\varphi_i$ be a convex function on $\R$ such that $\varphi_i-\varphi_{P_i}$ is bounded for each $i=1,\dots,m$. Then the total mass
$$\int_{\R}MA_{\Bbb{R}}(\varphi_1,\dots,\varphi_m)=MV_m(P_1,\dots,P_m).$$
\end{prop}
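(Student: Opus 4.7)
The plan is to reduce the mixed statement to the single–function case by polarization, and to handle the single–function case by identifying the image of the sub-gradient of $\varphi$ with the polytope $P$ up to Lebesgue null sets.

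For the non-mixed statement, suppose first that $\varphi$ is a finite convex function on $\R$ with $|\varphi-\varphi_P|\leq C$. By the definition (\ref{defn}), the problem reduces to showing
$$\mathrm{Vol}_m\!\bigl(d\varphi(\R)\bigr)=\mathrm{Vol}_m(P).$$
I would prove the inclusion $d\varphi(\R)\subseteq P$ by a direct ray argument: given $p\in d\varphi|_x$, the subgradient inequality $\varphi(tu)\geq \varphi(x)+t\langle p,u\rangle-\langle p,x\rangle$ combined with $\varphi(tu)\leq \varphi_P(tu)+C=t\varphi_P(u)+C$ forces $\langle p,u\rangle\leq\varphi_P(u)$ for every unit vector $u$, which is the support-function characterization of membership in $P$. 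For the reverse direction (up to measure zero) I would use the Legendre transform $\varphi^*(p):=\sup_x(\langle x,p\rangle-\varphi(x))$. The bound $|\varphi-\varphi_P|\leq C$ gives $|\varphi^*-\varphi_P^*|\leq C$, and since $\varphi_P^*$ is the convex indicator of $P$, the effective domain of $\varphi^*$ is exactly $P$ (with $\varphi^*$ bounded there). Being convex and locally bounded, $\varphi^*$ is differentiable a.e.\ on $P^{\circ}$, and at each such $p$ the Fenchel duality identity $x\in d\varphi^*|_p \Leftrightarrow p\in d\varphi|_x$ shows that $p\in d\varphi(\R)$. Hence $d\varphi(\R)\supseteq P^{\circ}$ up to a null set, which finishes the single–function case: $\int_{\R}MA_{\Bbb R}(\varphi)=m!\,\mathrm{Vol}_m(P)=MV_m(P,\dots,P)$.

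The mixed statement follows by polarization. The point is that Minkowski addition of convex bodies corresponds to ordinary addition of support functions, $\varphi_{P_{j_1}+\cdots+P_{j_k}}=\varphi_{P_{j_1}}+\cdots+\varphi_{P_{j_k}}$, so the convex function $\psi:=\varphi_{j_1}+\cdots+\varphi_{j_k}$ satisfies $\psi-\varphi_{P_{j_1}+\cdots+P_{j_k}}$ bounded. Applying the single–function case gives $\int_{\R}MA_{\Bbb R}(\psi)=m!\,\mathrm{Vol}_m(P_{j_1}+\cdots+P_{j_k})$. Inserting this into the polarization formula (\ref{rpol}) and comparing term by term with the polarization formula for $MV_m(P_1,\dots,P_m)$ recalled in Section~2.1, the factorials cancel and the two expressions coincide.

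The main obstacle is the measure-theoretic step in the single-function case: carefully controlling the image of the sub-gradient near $\partial P$, where $\varphi^*$ may fail to be differentiable and where faces of $P$ contribute to $d\varphi$. The point to emphasize is that only the interior $P^{\circ}$ needs to be recovered, since $\partial P$ has Lebesgue measure zero; this is what allows the Legendre duality argument to go through without any regularity assumption on $\varphi$ beyond convexity and the asymptotic control $|\varphi-\varphi_P|\leq C$.
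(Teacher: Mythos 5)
The paper does not actually supply a proof of this proposition; it simply cites \cite[Proposition 3]{PasR} and \cite[Lemma 2.5]{BerBo}, so there is no proof in the text to compare against. Your argument is correct and self-contained, and it is in the same spirit as the cited references (Passare--Rullg{\aa}rd in particular use exactly this kind of Legendre-duality reasoning to identify the image of the subgradient with the Newton polytope). Both halves of your single-function step check out: the ray argument $t\varphi_P(u)+C\geq \varphi(tu)\geq \varphi(x)+t\langle p,u\rangle -\langle p,x\rangle$, divided by $t$ and sent to $\infty$, does give $\langle p,u\rangle\leq \varphi_P(u)$ for all $u$, i.e.\ $d\varphi(\R)\subseteq P$; and since $\varphi$ is finite convex, hence continuous, hence closed, biduality $\varphi^{**}=\varphi$ holds and the Fenchel relation $x\in \partial\varphi^*(p)\Leftrightarrow p\in\partial\varphi(x)$ recovers $P^\circ$ up to a null set, since $\varphi^*$ (trapped between $\varphi_P^*-C$ and $\varphi_P^*+C$, i.e.\ bounded on $P$ and $+\infty$ off $P$) is locally Lipschitz on $P^\circ$ and differentiable a.e.\ there. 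The reduction by polarization is also fine: $\varphi_{j_1}+\cdots+\varphi_{j_k}$ differs from $\varphi_{P_{j_1}+\cdots+P_{j_k}}$ by a bounded amount, the single-function case applies verbatim to each term of (\ref{rpol}), and the resulting alternating sum is $m!$ times the paper's polarization expansion of $\mathrm{Vol}_m$, so the $m!$ cancels against the $1/m!$ in (\ref{rpol}). The only thing worth making explicit in a final write-up is the Aleksandrov fact (implicit in the paper's definition (\ref{defn})) that the set of $p$ lying in $d\varphi|_x$ for more than one $x$ has Lebesgue measure zero, which is what makes $E\mapsto m!\,\mathrm{Vol}_m(d\varphi(E))$ a measure and justifies computing the total mass as $m!\,\mathrm{Vol}_m(d\varphi(\R))$.
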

\subsection{Pluri-potential theory}\label{pp}
We let $\Bbb{C}^*:=\Bbb{C}\setminus\{0\}$ and $\|z\|$ denote the Euclidean norm of $z\in \C.$ For a convex body $P\subset\R,$ we denote
$$H_P(z):=\max_{J\in P}\log|z^{J}|$$ 
where we use the multi-dimensional notation $z^J:=z_1^{j_1}\dots z_m^{j_m}$ and $J=(j_1,\dots,j_m)\in\Z.$ Clearly, $H_P$ is a psh function on $\T.$  Indeed, $H_P$ coincides with $\varphi_P,$ the support function of $P$ in the logarithmic coordinates on $\T.$ Namely, letting 
$$Log:\T:\to\R$$
$$Log(z)=(\log|z_1|,\dots,\log|z_m|)$$
we see that $H_P(z)=\varphi_P\circ Log(z)$ for $z\in \T.$ For instance, if $P=\Sigma$ then $H_{\Sigma}(z)=\displaystyle\max_{i=1,\dots,m}\log^+|z_i|.$

We let $\mathcal{L}(\C)$ (respectively $\mathcal{L}_+(\C)$) denote the \textit{Lelong class} i.e. the set of psh functions $\psi$ on $\C$ such that $\psi(z)\leq \log^+\|z\|+C_{\psi}$ (respectively $\psi(z)-\log^+\|z\|$ is bounded). Following \cite{Ber1}, we also define the following classes of psh functions: 
$$\mathcal{L}_P:=\{\psi\in Psh(\T): \psi\leq H_P+C_{\psi}\ \text{on}\ \T\}$$
$$\mathcal{L}_{P,+}:=\{\psi\in \mathcal{L}_P: \psi\geq H_P+ C'_{\psi}\ \text{on}\ \T\}$$
We say that a function $\psi\in \mathcal{L}_P$ is \textit{$m$-circled} if  $\psi(z)=\psi(|z_1|,\dots,|z_m|),$ i.e. $\psi$ is invariant under the action of the real torus $(S^1)^m.$ We denote the set of all $m$-circled functions in $\mathcal{L}_P$ by $\mathcal{L}_P^c.$ The class $\mathcal{L}_P$ is a generalization of the Lelong class $\mathcal{L}(\C)$ which correspond to the case $P=\Sigma.$ Indeed, since every $\psi\in\mathcal{L}_{\Sigma}$ is locally bounded from above near points of the set $\{z\in\C: z_1\cdots z_m=0\},$ it extends to a psh function $\tilde{\psi}$ on $\C.$ Moreover, since $$\max_{J\in\Sigma}\log|z^J|=\max_{i=1,\dots,m}\log^+|z_i|\leq \log^+\|z\|$$ the extension $\tilde{\psi}\in \mathcal{L}(\C).$ 
 \\ \indent
The following lemma will be useful in the sequel.
\begin{lem}\label{helpy}
Let $P$ be a convex body and $\psi\in \mathcal{L}_{P,+}.$ Then for every $p\in P^{\circ}$ there exists $\kappa,C_{\psi}>0$ such that 
$$\psi(z)\geq \kappa\max_{j=1,\dots,m}\log|z_j|+\log|z^p|-C_{\psi} \ \ \text{for every}\ z\in\T.$$
\end{lem}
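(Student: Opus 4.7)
The plan is to reduce the inequality entirely to a lower bound on the support function $\varphi_P = H_P\circ Log^{-1}$, using that $\mathcal{L}_{P,+}$ already controls $\psi$ from below by $H_P$ up to a constant. By definition of $\mathcal{L}_{P,+}$, there is a constant $C'_\psi\in\Bbb{R}$ with $\psi(z)\geq H_P(z)+C'_\psi$ on $\T$, so it suffices to prove the pointwise inequality
\[
H_P(z)\;\geq\;\kappa\max_{j=1,\dots,m}\log|z_j|+\log|z^p|+C
\]
for a uniform constant $C$ (the final $-C_\psi$ will then be $-(C'_\psi+C)$, chosen to be negative after possibly enlarging).

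The key geometric input is that $p\in P^{\circ}$: since $P$ is a convex body, there exists $\kappa>0$ so small that the closed ball of radius $\kappa$ around $p$ lies in $P$. In particular, $p+\kappa e_j\in P$ for every standard basis vector $e_j$, $j=1,\dots,m$. From the definition of $H_P$ as the maximum of $\log|z^J|$ over lattice-free $J\in P$ (actually over all $J$ in $P$, since $H_P$ equals the support function of $P$ under $Log$), one obtains for each $j$
\[
H_P(z)\;\geq\;\log|z^{p+\kappa e_j}|\;=\;\log|z^p|+\kappa\log|z_j|.
\]
Taking the maximum over $j$ on the right yields $H_P(z)\geq \log|z^p|+\kappa\max_j\log|z_j|$, which is exactly what was needed.

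Combining the two steps gives
\[
\psi(z)\;\geq\;H_P(z)+C'_\psi\;\geq\;\kappa\max_{j=1,\dots,m}\log|z_j|+\log|z^p|+C'_\psi,
\]
and setting $C_\psi:=\max(1,-C'_\psi)>0$ yields the stated bound. There is no real obstacle here; the only small subtlety is being careful that the inclusion $p+\kappa e_j\in P$ uses only convexity of $P$ and $p\in P^{\circ}$, which is immediate from the fact that $p$ admits a Euclidean neighborhood inside $P$.
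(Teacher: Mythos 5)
Your proof is correct and takes essentially the same route as the paper: both arguments reduce the estimate to the lower bound $\psi\geq H_P+C'_\psi$ coming from $\psi\in\mathcal{L}_{P,+}$, and then exploit $p\in P^\circ$ to bound the support function $\varphi_P=H_P\circ Log^{-1}$ from below. The paper phrases the second step via $B(p,\kappa)\subset P$ and the identity $(\varphi_P^\star)^\star=\varphi_P$, obtaining $\varphi_P(x)\geq\langle p,x\rangle+\kappa\|x\|$ and hence $H_P(z)\geq\log|z^p|+\kappa\max_j|\log|z_j||$, whereas you only use the finitely many points $p+\kappa e_j\in P$, which gives precisely $H_P(z)\geq\log|z^p|+\kappa\max_j\log|z_j|$. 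That is exactly the stated inequality, so your proof is complete; one small remark is that the paper's ball argument delivers the stronger bound with $|\log|z_j||$, and it is that stronger form (growth of $H_Q$ also as some $|z_j|\to0$) that gets invoked in the proof of Proposition~\ref{mixma}. If you wanted to recover it with your method you would simply also include the points $p-\kappa e_j\in P$ in the maximum.
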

\begin{proof}
  Let $\varphi_P(x)$ denote the support function of $P.$ Fixing a small ball $B(p,\kappa)\subset P^{\circ},$ by definition we have $\varphi_P^{\star}\equiv0$ on $B(p,\kappa).$ Since $(\varphi_P^{\star})^{\star}=\varphi_P$ this implies that
\begin{eqnarray*} 
\varphi_P(x)\geq \sup_{q\in B(p,\kappa)}\langle q,x\rangle&=& \sup_{y\in B(0,1)}\langle \kappa y,x\rangle+\langle p,x\rangle\\
&=& \kappa\|x\|+\langle p,x\rangle
\end{eqnarray*}
 hence, using $H_P(z)=\varphi_P(Log(z))$ for $z\in\T$ we obtain
$$H_P(z)\geq \kappa\max_{j=1,\dots,m}|\log|z_j||+\log|z^p|$$
which implies the assertion. 
\end{proof}

\subsubsection{Global extremal function}\label{global}In this section, we let $K\subset\T$ be a non-pluripolar compact set and $q:\T\to \Bbb{R}$ be a continuous function. We define the \textit{weighted global extremal function} $V_{P,K,q}^*$ to be the usc regularization of 
$$V_{P,K,q}:=\sup\{\psi\in \mathcal{L}_P: \psi\leq q\ \text{on}\ K\}.$$ 
We remark that in the special case $P=\Sigma$ the function $V^*_{\Sigma,K,q}$ coincides with the weighted global extremal function defined in  \cite[Appendix B]{SaffTotik}. Moreover, specializing further, in the unweighted case (i.e. $q\equiv0$) $V^*_{\Sigma,K}$ is the pluricomplex Green function of $K$ (cf. \cite[\S5]{Klimek}). A standard argument shows that $V^*_{P,K,q}\in \mathcal{L}_{P,+}.$ In particular, $V_{P,K,q}^*\in Psh(\T)\cap L_{loc}^{\infty}(\T).$ The following example is a consequence of standard arguments (cf. \cite[\S 5]{Klimek}):
\begin{example}\label{torus}
For $P=[a,b]\subset \Bbb{R},\ K=S^1$ unit circle and $q\equiv0$ we have  $$V_{P,S^1}(z)=\max\{a\log|z|,b\log|z|\}=H_P(z)\ \text{for}\ z\in\Bbb{C}^*.$$ This implies that (more generally) for a convex polytope $P\subset\R$, $K=(S^1)^m\subset\T$ is the real torus and $q\equiv0$ the (unweighted) global extremal function $$V_{P,(S^1)^m}(z)=H_P(z)=\max_{J\in P}\log|z^J|\ \text{for}\ z\in\T.$$ In particular, $V_{P,(S^1)^m}$ is continuous.
\end{example}
\subsubsection{Complex Monge-Amp\'ere versus Real Monge-Amp\'ere} \label{CMA}
In what follows we denote $d=\partial+\bar{\partial}$ and $d^c=\frac{i}{2\pi}(\bar{\partial}-\partial)$ so that $dd^c=\frac{i}{\pi}\partial\bar{\partial}.$ It is well known that the relation between complex Monge-Amp\'ere of a m-circled psh function and the real Monge-Amp\'re of it (in the logarithmic coordinates) is given by 
\begin{equation}\label{eq1}
Log_*(MA_{\Bbb{C}}(\psi))=MA_{\Bbb{R}}(\varphi).
\end{equation}
That is for a Borel set  $E\subset \Bbb{R}^m$
$$\int_EMA_{\Bbb{R}}(\varphi)=\int_{Log^{-1}(E)}MA_{\Bbb{C}}(\psi).$$ 
Furthermore, by the results of \cite{BAR,BT2} the equality (\ref{eq1}) holds for every locally bounded $m$-circled psh function $\psi$ on $\T.$ Then (\ref{eq1}) together with polarization formula for complex Monge-Amp\'ere implies that
$$\bigwedge_{i=1}^mdd^c\psi_i=\frac{1}{m!}\sum_{j=1}^m\sum_{1\leq i_1\leq \dots\leq i_j}(-1)^{m-j}MA_{\Bbb{C}}(\psi_{i_1}+\dots+\psi_{i_j})$$
and (\ref{rpol}) implies that for locally bounded m-circled psh functions $\psi_1,\dots,\psi_m$
$$Log_*(\bigwedge_{i=1}^mdd^c\psi_i)=MA_{\Bbb{R}}(\varphi_{1},\dots,\varphi_{m})$$
where $\varphi_i(x)=\psi_i(z)$ is the corresponding convex function defined as above. Thus, the following is an immediate consequence of Proposition \ref{PR1}:
\begin{prop}\label{mass}
Let $\psi_i\in \mathcal{L}^c_{P_i,+}$ for $i=1,\dots,m$ then the total mass of the mixed complex Monge-Amp\'ere
$$\int_{\T}\bigwedge_{i=1}^mdd^c\psi_i=MV_m(P_1,\dots,P_m).$$ 
\end{prop}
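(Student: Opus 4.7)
The plan is to convert the statement, via the push-forward identity $(\ref{eq1})$ and its mixed polarization, into the real Monge-Amp\`ere statement of Proposition~\ref{PR1}. Nothing new analytically is required: the work has already been done in assembling the real/complex dictionary for $m$-circled psh functions on $\T$.

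First I would unpack the hypothesis. Since each $\psi_i$ is $m$-circled, there is a unique function $\varphi_i$ on $\R$ with $\psi_i(z) = \varphi_i(Log(z))$, and the plurisubharmonicity of $\psi_i$ forces $\varphi_i$ to be convex. Membership in $\mathcal{L}^c_{P_i,+}$ gives the two-sided bound $H_{P_i} + C'_{\psi_i} \leq \psi_i \leq H_{P_i} + C_{\psi_i}$ on $\T$, which in the logarithmic coordinates reads
$$\varphi_{P_i} + C'_{\psi_i} \leq \varphi_i \leq \varphi_{P_i} + C_{\psi_i} \quad \text{on } \R,$$
so $\varphi_i - \varphi_{P_i}$ is bounded. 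This is exactly the hypothesis required by Proposition~\ref{PR1}. In particular each $\psi_i$ is locally bounded on $\T$, so the mixed complex Monge-Amp\`ere $\bigwedge_{i=1}^m dd^c\psi_i$ is a well-defined positive Borel measure by Bedford--Taylor.

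Next I would invoke the mixed push-forward identity displayed just above the statement,
$$Log_*\!\left(\bigwedge_{i=1}^m dd^c\psi_i\right) = MA_{\Bbb{R}}(\varphi_1,\dots,\varphi_m),$$
and apply it to the full space: taking $E = \R$ (so $Log^{-1}(E) = \T$) gives
$$\int_{\T}\bigwedge_{i=1}^m dd^c\psi_i \;=\; \int_{\R} MA_{\Bbb{R}}(\varphi_1,\dots,\varphi_m) \;=\; MV_m(P_1,\dots,P_m),$$
where the second equality is Proposition~\ref{PR1}. This completes the argument.

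The only point one has to be careful about is whether the push-forward identity is legitimately applicable to the test set $\R$, since one needs to know that no mass of $\bigwedge dd^c\psi_i$ escapes to the boundary divisor $\{z_1 \cdots z_m = 0\}$ along which $H_{P_i}$ blows down to $-\infty$. This is implicit in the polarization of $(\ref{eq1})$ recalled in the preceding paragraph, where the continuity of $\mathcal{MA}_{\Bbb{R}}$ under locally uniform convergence (from \cite{BAR}) is used to transfer the identity from smooth $\psi_i$ to merely locally bounded ones. Once that machinery is in hand, as it is here, the proof of Proposition~\ref{mass} is the short chain of equalities above.
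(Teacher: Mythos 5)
Your proposal is correct and follows essentially the same route as the paper: the paper derives Proposition~\ref{mass} as an immediate consequence of Proposition~\ref{PR1} via the pushed-forward mixed identity $Log_*(\bigwedge dd^c\psi_i)=MA_{\Bbb{R}}(\varphi_1,\dots,\varphi_m)$, which is exactly the chain of equalities you spell out. Your final worry about mass escaping to $\{z_1\cdots z_m=0\}$ is superfluous here, since $\T=(\Bbb{C}^*)^m$ already excludes that locus and $Log:\T\to\R$ is a proper fiber bundle with fiber $(S^1)^m$, so the pushforward identity tautologically preserves total mass.
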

By Example \ref{torus} and Proposition \ref{mass} we obtain:
\begin{example}\label{torus2}
Let $P_i\subset\R$ be convex polytopes for $i=1,\dots,m,$ $K=(S^1)^m$ is the real torus and $q\equiv0.$ Then the mixed complex Monge-Amp\'ere
$$\bigwedge_{i=1}^mdd^c(V_{P_i,K})=\frac{MV_m(P_1,\dots,P_m)}{(2\pi)^m}d\theta_1\dots d\theta_m.$$ 
\end{example}

Recall that the extremal function $V:=V^*_{P,K,q}$ is a locally bounded psh function on $\T$. Thus, by \cite{BT2} its complex Monge-Amp\'ere measure
$$MA_{\Bbb{C}}(V):=dd^c(V)\wedge\dots\wedge dd^c(V)$$ is well defined and does not charge pluripolar subsets of $\T$.
We denote the support of complex Monge-Amp\'ere of the extremal function by $supp(MA_{\Bbb{C}}(V)).$ 
The following result is classical and follows from \cite[Proposition 3]{PasR} and \cite[Lemma 2.5]{BerBo}. 

\begin{prop}\label{support}
Let $P$ be a convex body and $(K,q)$ be a regular weighted compact set. Then $$supp(MA_{\Bbb{C}}(V_{P,K,q}))\subset\{z\in K:V_{P,K,q}(z)=q(z)\}.$$
In particular, if $K$ is circled and $q\in \mathcal{L}_{P,+}^c\cap\mathcal{C}^2(\T)$ then
\begin{equation}
Log(supp(MA_{\Bbb{C}}(V)))\subset \nabla\varphi^{-1}(P^{\circ})
\end{equation} where $\varphi$ is the convex function defined by relation $q(z)=\varphi(Log(z)).$
\end{prop}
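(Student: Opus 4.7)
The statement splits into a contact-set description of the support (Part~1) and a convex-geometric refinement in the $m$-circled case (Part~2).

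\textit{Part 1.} My plan is to show that $V:=V^*_{P,K,q}$ is Bedford--Taylor maximal on the open set $\Omega := (\T \setminus K) \cup \{z \in K : V(z) < q(z)\}$, which is open because $V$ is continuous by regularity of $(K,q)$. Given a ball $B \Subset \Omega$ and a psh competitor $u$ on $B$ with $\limsup u \leq V$ on $\partial B$, I would glue $\widetilde V := \max(V,u)$ on $B$ and $V$ outside. Compactness and continuity provide a uniform slack $V \leq q - c$ on $\overline B \cap K$ for some $c>0$, so a suitable truncation of $u$ keeps $\widetilde V$ in the defining family for $V$. Hence $\widetilde V \leq V$, so $u \leq V$ on $B$, and $V$ is maximal on $\Omega$. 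By \cite{BT2} this gives $MA_{\Bbb{C}}(V) \equiv 0$ on $\Omega$, and therefore $\text{supp}(MA_{\Bbb{C}}(V)) \subset \T \setminus \Omega = \{z \in K : V(z)=q(z)\}$.

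\textit{Part 2.} Since $K$ and $q$ are circled, $V$ is $m$-circled and I may write $V(z)=\Psi(Log(z))$, $q(z)=\varphi(Log(z))$ with $\Psi$ convex and $\Psi-\varphi_P$ bounded. By (\ref{eq1}), $Log_*(MA_{\Bbb{C}}(V))=MA_{\Bbb{R}}(\Psi)$, which reduces the claim to showing $\text{supp}(MA_{\Bbb{R}}(\Psi))\subset \nabla\varphi^{-1}(P^{\circ})$. At any $x_0 \in \text{supp}(MA_{\Bbb{R}}(\Psi))$ interior to the contact set $\{\Psi=\varphi\}$, the two functions agree on a neighborhood, so $MA_{\Bbb{R}}(\Psi)=MA_{\Bbb{R}}(\varphi)$ locally and (\ref{eq}) identifies the density with $m!\det(\nabla^2\varphi)$. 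Positive density at $x_0$ makes $\nabla\varphi$ a local diffeomorphism, so a neighborhood of $x_0$ maps to a full open neighborhood of $\nabla\varphi(x_0)$ in $\R$. Since $\nabla\varphi$ takes values in $P$ by Legendre duality (the conjugate $\varphi^*$ is supported in $P$ because $\varphi\leq \varphi_P+C$), this full neighborhood lies inside $P$, forcing $\nabla\varphi(x_0)\in P^{\circ}$. Boundary points of the contact set that lie in the support are treated by the same real convex-geometric mechanism as in \cite[Proposition 3]{PasR} and \cite[Lemma 2.5]{BerBo}: the subgradient inclusion $d\Psi(x_0)\subset d\varphi(x_0)=\{\nabla\varphi(x_0)\}$ (from $\varphi\in \mathcal{C}^2$ and $\Psi\leq \varphi$ locally, using that the circled nonpluripolar set $K$ has $Log(K)$ with nonempty interior) together with upper semicontinuity of the subdifferential reduces the question to the interior case.

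The main obstacle I expect is the boundary case of Part~2, where one must carefully control the subgradient image of $\Psi$ near a point at which the contact region degenerates. Rather than re-deriving this, the cleanest route is to invoke the cited lemmas of Passare--Rullg{\aa}rd and Berman--Boucksom directly, both of which package precisely this convex analysis in the Newton polytope setting.
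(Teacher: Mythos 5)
Part 1 is the standard maximality argument and is essentially right, but ``a suitable truncation of $u$'' glosses the key point: a minimum of psh functions need not be psh, and the slack $V\le q-c$ on $\overline B\cap K$ does not by itself bound the competitor $u$ there. The clean fix is to shrink $B$ so that $\mathrm{osc}_{\overline B}\,V<c$; then the maximum principle gives $u\le\max_{\partial B}V< V+c\le q$ on $K\cap B$, the glued $\widetilde V$ is a genuine element of the defining family, and maximality on $\Omega$ follows. With that change Part 1 is fine.

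Part 2 has genuine gaps. The claim that a circled nonpluripolar compact $K\subset\T$ has $Log(K)$ with nonempty interior is false: $K=(S^1)^m$ is circled, compact and nonpluripolar (any nonempty circled compact is, by averaging over the torus action), yet $Log(K)=\{0\}$. This removes the mechanism by which you get $\Psi\le\varphi$ near a boundary contact point; note also the direction is delicate, since $\varphi\circ Log$ is itself a competitor in the envelope, so $\Psi\ge\varphi$ holds globally, and $\Psi\le\varphi$ holds only on $Log(K)$. Even in the interior case, $x_0\in\mathrm{supp}\,MA_{\Bbb{R}}(\Psi)$ does not force $\det\nabla^2\varphi(x_0)>0$; the local-diffeomorphism argument applies only at nearby points $x_n$ of strictly positive density, and passing to the limit yields $\nabla\varphi(x_0)\in P$, not $P^{\circ}$. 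These are not cosmetic: take $m=1$, $P=[0,1]$, $K=S^1$, and $\varphi\in\mathcal{C}^2$ convex with $\varphi'\equiv0$ on $(-\infty,0]$, $\varphi'\equiv1$ on $[1,\infty)$, and $\varphi-\varphi_P$ bounded. Then $\Psi=\varphi_P+\varphi(0)$, $MA_{\Bbb{R}}(\Psi)=\delta_0$, and $\nabla\varphi(0)=0\in\partial P$, so the stated inclusion into $\nabla\varphi^{-1}(P^{\circ})$ would fail as written. The second assertion presumably needs a nondegeneracy hypothesis (e.g.\ $\nabla^2\varphi>0$, under which $\nabla\varphi^{-1}(P^{\circ})$ is all of $\Bbb{R}^m$ and the claim is immediate) or is intended for the unbounded case $K=\T$ of Example \ref{SZex}. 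For what it is worth, the paper gives no proof here at all, only a citation to [PasR, Prop.\ 3] and [BerBo, Lemma 2.5], so your closing instinct to invoke those results directly is the safer move; but you should check their exact hypotheses before relying on them, since the boundary case appears to be where both your argument and the bare statement are fragile.
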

A remarkable property of the Lelong class functions $\psi\in \mathcal{L}(\C)\cap L^{\infty}_{loc}(\C)$ is that the total mass $\int_{\C}MA_{\Bbb{C}}(\psi)\leq 1.$ Moreover, if $\psi\in \mathcal{L}_+(\C)$
\begin{equation}\label{cmamv}
\int_{\C}MA_{\Bbb{C}}(\psi)= \int_{\Bbb{C}^m}MA_{\Bbb{C}}(\frac12\log(1+\|z\|^2)))=1
\end{equation}
which was observed in \cite{Taylor}. The equality (\ref{cmamv}) is a consequence of comparison theorem (see \cite[\S 5]{Klimek} for the details and references). 

 In what follows we let $\omega:=\frac12dd^c\log(1+\|z\|^2)$ denote the restriction of the Fubini-Study form to $\T$ and
$$\varpi:=dd^cH_{\Sigma}(z)=\displaystyle dd^c(\max_{i=1,\dots,m}\log^+|z_i|).$$
 We also denote the product of annulli by
$$A_{\rho,R}:=\{z\in\T:  \rho<|z_i|<R\ \text{for each}\ i=1,\dots,m\}\ \text{for}\ \rho,R>0.$$ 
Next, we obtain a generalized version of \cite{Taylor} to our setting:  
\begin{prop}\label{mixma}
Let $P_i\subset \R$ be a convex body and $u_i,v_i\in \mathcal{L}_{P_i}\cap L^{\infty}_{loc}(\T)$ such that
$$u_i(z)\leq v_i(z)+C_i \ \text{for}\ z\in\T$$ for each $i=1,\dots,k.$ Then the total masses
$$\int_{\T}\bigwedge_{i=1}^kdd^cu_i\wedge\varpi^{m-k}\leq \int_{\T}\bigwedge_{i=1}^kdd^cv_i\wedge\varpi^{m-k}.$$
In particular, if $u_i\in\mathcal{L}_{P_i,+}$ each $i=1,\dots,k$ then the total mass of the mixed Monge-Amp\'ere
$$\int_{\T}dd^cu_1\wedge\dots\wedge dd^cu_k\wedge\varpi^{m-k}=MV_m(P_1,\dots,P_k,\Sigma,\dots,\Sigma).$$
\end{prop}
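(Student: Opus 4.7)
The plan is to first establish the inequality by reducing to the $m$-circled case via circularization over the torus action, and then deduce the equality for $u_i\in\mathcal{L}_{P_i,+}$ by a sandwich argument between $u_i$ and $H_{P_i}$, invoking Proposition~\ref{mass}.

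\textbf{Inequality.} Given $u_i,v_i\in\mathcal{L}_{P_i}\cap L^\infty_{loc}(\T)$ with $u_i\leq v_i+C_i$, I would form the circular averages $u_i^{\circ}(z):=\int_{(S^1)^m} u_i(e^{i\theta}z)\,d\mu(\theta)$ and $v_i^{\circ}$; these are $m$-circled psh functions lying in $\mathcal{L}_{P_i}\cap L^\infty_{loc}(\T)$ and satisfy $u_i^{\circ}\leq v_i^{\circ}+C_i$. The torus-invariance of $\varpi=dd^c H_\Sigma$ together with the change-of-variables identity
$$T_\theta^{\ast}\Bigl(\bigwedge_{i=1}^k dd^c u_i\wedge\varpi^{m-k}\Bigr)=\bigwedge_{i=1}^k dd^c(u_i\circ T_\theta)\wedge\varpi^{m-k},$$
combined with Bedford--Taylor continuity of mixed Monge--Amp\`ere along uniformly locally bounded sequences, allows one to interchange the Haar integrations with the wedge product and conclude that $\int_{\T}\bigwedge dd^c u_i\wedge\varpi^{m-k}$ is unchanged when each $u_i$ is replaced by $u_i^{\circ}$. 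Having reduced to the $m$-circled case, the mixed version of \eqref{eq1} together with the polarization formula \eqref{rpol} identifies this total mass with the real mixed Monge--Amp\`ere
$$\int_{\mathbb{R}^m} MA_{\mathbb{R}}(\varphi_1,\dots,\varphi_k,\varphi_\Sigma,\dots,\varphi_\Sigma),$$
where $\varphi_i$ is the convex function on $\mathbb{R}^m$ associated to $u_i^{\circ}$ via $u_i^{\circ}=\varphi_i\circ\mathrm{Log}$. Finally, for convex $\varphi\leq\psi+C$ with the required growth, the Legendre-transform inequality $\varphi^{\star}\geq\psi^{\star}-C$ forces $\mathrm{dom}(\varphi^{\star})\subseteq\mathrm{dom}(\psi^{\star})$; since $d\varphi(\mathbb{R}^m)=\mathrm{dom}(\varphi^{\star})$ and $\int MA_{\mathbb{R}}(\varphi)=m!\,\mathrm{Vol}_m(d\varphi(\mathbb{R}^m))$ by \eqref{defn}, the pure inequality follows, and polarizing via \eqref{rpol} upgrades it to the mixed inequality.

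\textbf{Equality.} When $u_i\in\mathcal{L}_{P_i,+}$, by definition $u_i-H_{P_i}$ is bounded on $\T$, so we have both $u_i\leq H_{P_i}+C_i$ and $H_{P_i}\leq u_i+C_i^{\prime}$. Two applications of the inequality part yield
$$\int_{\T}\bigwedge_{i=1}^k dd^c u_i\wedge\varpi^{m-k}=\int_{\T}\bigwedge_{i=1}^k dd^c H_{P_i}\wedge\varpi^{m-k}.$$
Writing $\varpi^{m-k}=\bigwedge_{j=k+1}^m dd^c H_\Sigma$ and observing that $H_{P_1},\dots,H_{P_k},H_\Sigma,\dots,H_\Sigma$ are $m$-circled members of $\mathcal{L}^c_{P_1,+},\dots,\mathcal{L}^c_{P_k,+},\mathcal{L}^c_{\Sigma,+},\dots,\mathcal{L}^c_{\Sigma,+}$ respectively, Proposition~\ref{mass} applies directly to the $m$-tuple $(H_{P_1},\dots,H_{P_k},H_\Sigma,\dots,H_\Sigma)$ and gives the value $MV_m(P_1,\dots,P_k,\Sigma,\dots,\Sigma)$.

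\textbf{Main obstacle.} The most delicate step is the justification of the circularization argument. Individual invariance of $\int_{\T}\bigwedge dd^c u_i\wedge\varpi^{m-k}$ under a simultaneous rotation $T_\theta$ is a routine consequence of change of variables and the torus-invariance of $\varpi$, but upgrading this to invariance under the independent $(S^1)^{mk}$-averaging (one Haar average per factor) requires a careful iterative procedure that circularizes the $u_i$'s one at a time and repeatedly invokes Bedford--Taylor continuity of mixed Monge--Amp\`ere along continuous families of uniformly locally bounded psh functions.
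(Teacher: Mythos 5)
The circularization step is a genuine gap, and you correctly flagged it as the main obstacle. The problem is that the claim
\[
\int_{\T}\bigwedge_{i=1}^k dd^c u_i^{\circ}\wedge\varpi^{m-k}=\int_{\T}\bigwedge_{i=1}^k dd^c u_i\wedge\varpi^{m-k}
\]
does not follow from torus-invariance together with Bedford--Taylor continuity, iteratively or otherwise. The operator $u\mapsto dd^cu\wedge T$ is linear for a fixed positive closed current $T$, so by multilinearity the left-hand side expands into an integral over $(S^1)^{mk}$ of the quantities $\int_{\T}\bigwedge_i dd^c(u_i\circ T_{\theta_i})\wedge\varpi^{m-k}$ with \emph{independent} rotation parameters $\theta_1,\dots,\theta_k$. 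The change-of-variable identity only identifies this inner integral with $\int_{\T}\bigwedge_i dd^c u_i\wedge\varpi^{m-k}$ when $\theta_1=\dots=\theta_k$; for independent parameters it instead equals $\int_{\T}\bigwedge_i dd^c(u_i\circ T_{\theta_i-\theta_1})\wedge\varpi^{m-k}$, which you have no a priori control over. The same difficulty persists if you circularize one factor at a time: replacing $u_1$ by $u_1^{\circ}$ while keeping $u_2,\dots,u_k$ fixed, the change of variables pushes $T_\theta$ onto the \emph{other} factors, so the inner integral becomes $\int_{\T}dd^cu_1\wedge\bigwedge_{i>1}dd^c(u_i\circ T_\theta)\wedge\varpi^{m-k}$ and you are back where you started. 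In fact the statement you need here — that $u_i$ and $u_i\circ T_{\theta_i}$ (which are \emph{not} within a bounded distance of each other in general) have the same mixed total mass — is essentially equivalent to the inequality you are trying to prove, so the argument as written is circular.

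There is also a smaller error in the reduction to convex analysis: you cannot "polarize the inequality" via \eqref{rpol}, because that formula has alternating signs. What saves the conclusion in the real picture is monotonicity of mixed volumes applied to the nested gradient images $d\varphi_i(\R)\subseteq\overline{d\psi_i(\R)}$, not polarization. This is fixable, but the circularization gap is not.

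For comparison, the paper's proof avoids any passage to circled functions or to the real Monge--Amp\`ere. It reduces by symmetry to changing a single factor ($v_i=u_i$ for $i\geq 2$), perturbs $v_1$ to $v_1'=v_1+\epsilon H_Q$ with $0\in Q^\circ$ so that $u_1-v_1'\to-\infty$ at the ends of $\T$ by Lemma \ref{helpy}, and then uses the max-construction $\psi_N=\max\{u_1,v_1'-N\}$: this coincides with $v_1'-N$ near the boundary of a large $A_{\rho,R}$, so Stokes' theorem gives a mass comparison on $A_{\rho,R}$, and letting $N\to\infty$ together with Bedford--Taylor continuity along \emph{decreasing} sequences yields the inequality after sending $R\to\infty$, $\rho\to0$, $\epsilon\to0$. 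Your treatment of the equality part (sandwiching between $H_{P_i}$ and invoking Proposition \ref{mass}) matches the paper, but it rests on the inequality, so the gap propagates.
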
 
  
\begin{proof}
Since the complex Monge-Amp\'ere is a symmetric operator by replacing $v_i$ with $u_i$ successively in the $i^{th}$ step, it is enough to prove the assertion for the case $v_i=u_i$ for $2\leq i\leq k$. \\ \indent
We fix a convex body $Q\subset\R$ such that $0\in Q^{\circ}$. Then by Lemma \ref{helpy} and replacing $v_1$ by $v_1':=v_1+\epsilon H_Q$ for $\epsilon>0$ if necessary, we may assume that 
 $$u_1-v'_1\to -\infty$$ as $\|z\|\to \infty$ as well as  $|z_j|\to0$ for some $j\in\{1,\dots,m\}.$  Now, we define 
 $$\psi_N=\max\{u_1,v'_1-N\} .$$
 Note that $\psi_N=v'_1-N$ near the boundary of the set $A_{\rho,R}$ for sufficiently large $R>0$ and small $\rho>0.$ Thus,  by Stokes' theorem we obtain
 \begin{eqnarray*}
 \int_{\T}dd^cv'_1\wedge \bigwedge_{i=2}^kdd^cv_i\wedge \varpi^{m-k} &\geq &  \int_{A_{\rho,R}}dd^cv'_1\wedge \bigwedge_{i=2}^kdd^cv_i\wedge\varpi^{m-k} \\
 &=& \int_{A_{\rho,R}} dd^c\psi_N\wedge \bigwedge_{i=2}^kdd^cv_i\wedge \varpi^{m-k}.
 \end{eqnarray*}
Since $\psi_N$ decreases to $u_1$ as $N\to\infty,$ by Bedford-Taylor theorem \cite{BT2} on continuity of Monge-Amp\'ere measures along decreasing  sequences we infer that
 $$ \int_{\T}dd^cv'_1\wedge \bigwedge_{i=2}^kdd^cv_i\wedge\varpi^{m-k}\geq \int_{A_{\rho,R}}dd^cu_1\wedge \bigwedge_{i=2}^kdd^cv_i\wedge\varpi^{m-k}. $$ 
Finally, since $R\gg1,\rho>0 $ and $\epsilon>0$ are arbitrary letting $R\to \infty,\rho\to0$ and $\epsilon \to 0$ in $v_1'=v_1+\epsilon H_Q$ respectively, we obtain the first assertion.\\ \indent
  To prove the second assertion we let $v_i=H_{P_i}$ and apply the first part together with Proposition \ref{mass}. 

\end{proof}
\begin{rem}\label{rem}
We remark that the condition $u_i\in L^{\infty}_{loc}(\T)$ in Proposition \ref{mixma} is used to make sure that the mixed complex Monge-Amp\'ere is well defined. Thus, we infer that for $\psi\in \mathcal{L}_P$ the total mass of $MA_{\Bbb{C}}(\psi)$ is finite as soon as it is well defined on $\T.$ Note that by Bertini's theorem for generic $f_N^i\in Poly(NP_i)$ their zero sets $Z_{f_N^i}$ are smooth and intersect transversely.  It follows from \cite[\S III, Theorem 4.5]{DemBook} that for systems $(f^1_N,\dots,f^k_N)$ in general position the current of integration
$$[Z_{f_N^1,\dots,f_N^k}]=dd^c\log|f_N^1|\wedge\dots \wedge dd^c\log|f_N^k| $$ is well defined and has locally finite mass. Thus, it follows from Proposition \ref{mixma} that
\begin{eqnarray}\label{expo}
\frac{1}{N^k}\int_{\T} [Z_{f^1_N,\dots,f_N^k}]\wedge \omega^{m-k} &\leq&MV_m(P_1,\dots,P_k,\Sigma,\dots,\Sigma) 
\end{eqnarray}
which was also observed in \cite[Cor. 6.1]{Rashkovskii2} when $P\subset \R_{\geq 0}$.
\end{rem}
\subsection{A Siciak-Zaharyuta theorem}
We start with a basic result which is an easy consequence of Cauchy's estimates on the product of annulli 
$$A_{\rho,R}:=\{z\in\T:  \rho<|z_i|<R\ \text{for each}\ i=1,\dots,m\}\ \text{for}\ 0<\rho<R$$ together with a Liouville type argument.
\begin{prop}\label{poly}
Let $P\subset\R$ be an integral polytope and $f\in \mathcal{O}(\T)$ such that
$$\int_{\T}|f(z)|^2e^{-2NH_P(z)}(1+|z|^2)^{-r}dz<\infty$$ for some $0\leq r\ll 1.$ Then $f$ is a Laurent polynomial such that its support $S_f\subset NP.$ 
\end{prop}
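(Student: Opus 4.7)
My plan is to use Parseval's identity on the real torus to reduce the hypothesis to a monomial-wise integrability statement, and then run a convex-analytic Liouville argument to identify which Laurent coefficients can be nonzero.

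Expand $f(z)=\sum_{J\in\Z} c_J z^J$ on $\T$. In polar coordinates $z_j=\rho_j e^{i\theta_j}$ (so $dz=\prod\rho_j\,d\rho_j\,d\theta_j$), Parseval on $(S^1)^m$ gives
\[
\int_{(S^1)^m} |f(\rho e^{i\theta})|^2\,d\theta = (2\pi)^m\sum_J |c_J|^2 \rho^{2J};
\]
combining with the hypothesis and Tonelli, I deduce that every nonzero coefficient $c_J$ satisfies
\[
I_J := \int_{\R_+}\rho^{2J}\,e^{-2N\varphi_P(\log\rho)}\,(1+\|\rho\|^2)^{-r}\prod_{j=1}^m \rho_j\,d\rho < \infty.
\]
This is the ``Cauchy estimate on the product of annuli'' encoded by Parseval.

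The Liouville step analyses $I_J$. Under the substitution $t=\log\rho$,
\[
I_J = \int_{\R} e^{2\langle J+\mathbf{1},t\rangle - 2N\varphi_P(t)}\,(1+\|e^t\|^2)^{-r}\,dt,
\]
a log-concave integral. Along a ray $t=sv$, its integrand behaves like $e^{2s\alpha(v)+O(1)}$ with
\[
\alpha(v) = \langle J+\mathbf{1},v\rangle - N\varphi_P(v) - 2r\varphi_\Sigma(v),
\]
using that $\log(1+\|e^t\|^2)\sim 2s\varphi_\Sigma(v)$ where $\varphi_\Sigma(v)=\max(0,v_1,\dots,v_m)$ is the support function of the standard simplex $\Sigma$. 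If $\alpha(v_0)\geq 0$ at some $v_0$, a radial computation over an open cone around $v_0$ forces $I_J=\infty$; hence $\alpha(v)<0$ for every $v\neq 0$, which by the support-function characterization of convex bodies means $J+\mathbf{1}\in\mathrm{int}(NP+2r\Sigma)$.

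For $r>0$ small and $J+\mathbf{1}$ a lattice point, the slight enlargement of the integer polytope $NP$ by $2r\Sigma$ contains no new lattice points in its interior beyond $NP\cap\Z$; using that $P\subset\R_{\geq 0}$ (so the ``lower'' facets of $NP$ have outward normals $-e_j$ where $\varphi_\Sigma$ vanishes and are therefore not extended by $2r\Sigma$) a direct case analysis of the Jacobian-induced shift by $\mathbf{1}$ then yields $J\in NP$. Since $NP\cap\Z$ is finite, $f$ has finite support and is a Laurent polynomial with $S_f\subset NP$. The hardest step is this last lattice-point reduction: passing from the convex inclusion $J+\mathbf{1}\in\mathrm{int}(NP+2r\Sigma)$ to the exact integer condition $J\in NP$ requires a careful use of the integrality of $P$ together with the geometry of $\Sigma$ coming from the weight $(1+\|z\|^2)^{-r}$.
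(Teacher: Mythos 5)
The Parseval-plus-convexity strategy is exactly the ``Cauchy estimates on a product of annuli plus Liouville'' argument the paper has in mind, and the reduction to the monomial integrals $I_J$ is correct. Two issues, one cosmetic and one substantive. The cosmetic one: after the substitution $t=\log\rho$, the factor $(1+\|e^t\|^2)^{-r}$ contributes $-r\log(1+\|e^t\|^2)\sim -2rs\,\varphi_\Sigma(v)$ to the exponent $2s\alpha(v)$, so the coefficient in $\alpha$ should be $r\varphi_\Sigma(v)$, not $2r\varphi_\Sigma(v)$; this does not affect the convex deduction $J+\mathbf{1}\in\mathrm{int}(NP+r\Sigma)$.

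The substantive gap is the last paragraph, the step you yourself flag as hardest. The implication you assert --- that $J\in\Z^m$ and $J+\mathbf{1}\in\mathrm{int}(NP+r\Sigma)$ for small $r>0$ force $J\in NP$ --- is in fact false, and the hypothesis $P\subset\R_{\geq0}$ (which is not even assumed in the proposition) does not rescue it. Take $m=2$, $N=1$, $P=\mathrm{Conv}\{(0,0),(2,1),(0,1)\}\subset\R^2_{\geq0}$ and $J=(1,0)$, so $J+\mathbf{1}=(2,1)$ is a vertex of $P$. Along every direction $v$ in the normal cone at this vertex one has $\varphi_\Sigma(v)>0$, so $\langle (2,1),v\rangle=\varphi_P(v)<\varphi_P(v)+r\varphi_\Sigma(v)$, and for $v$ outside that cone the inequality is strict already without the $r$-term; hence $(2,1)\in\mathrm{int}(P+r\Sigma)$ for every $r>0$, and one checks directly that $\int_{(\Bbb{C}^*)^2}|z_1|^2e^{-2H_P}(1+\|z\|^2)^{-r}\,dz<\infty$, yet $J=(1,0)\notin P$ since $x-2y\leq0$ fails at $(1,0)$. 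So your lattice-point reduction cannot go through as stated (and, as a side remark, the parenthetical claim that $P\subset\R_{\geq0}$ forces the lower facets to have normals $-e_j$ is also false for this $P$, whose lower facet normal is $(1,-2)$). What is true and provable from your $\alpha(v)<0$ condition is that the set $\{J:c_J\neq0\}$ is contained in the fixed bounded region $(\mathrm{int}(NP+r\Sigma))-\mathbf{1}$, so $f$ is a Laurent polynomial; getting the support exactly inside $NP$ needs either a stronger weight in the hypothesis that compensates the Jacobian shift $+\mathbf{1}$, or a slightly weaker conclusion, and the proof must address this point explicitly rather than appeal to a case analysis that the example above rules out.
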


Throughout this section we denote $V:=V^*_{P,K,q}$ where $K$ and $q$ as in (\ref{global}) and $P\subset \R$ is an integral polytope with non-empty interior. Next, we define
$$\Phi_N:=\sup_{z\in\T}\{|f_N(z)|: f_N\in Poly(NP)\ \text{and}\ \max_{z\in K}|f_N(z)|e^{-Nq(z)}\leq1\}$$
Note that $\Phi_N.\Phi_M\leq \Phi_{N+M}$ which implies that $\lim_{N\to \infty}\frac1N\log\Phi_N(z)$ exists for $z\in \T$. Observe also that for each $f_N\in Poly(NP)$ the function $\frac{1}{N}\log|f_N(z)|$ belongs to $\mathcal{L}_P.$ Hence, $\lim_{N\to \infty}\frac1N\log\Phi_N\leq V$ on $\T.$  If $P$ is the unit simplex $\Sigma$ then it follows from seminal works of Siciak and Zaharyuta (see \cite{Klimek} for details) that $$\lim_{N\to \infty}\frac1N\log\Phi_N= V_{K,q}$$ point-wise on $\C$. We obtain a slightly stronger version of this result in the present setting:
\begin{thm}\label{extremal}
Let $P\subset\R$ be an integral polytope with non-empty interior and $(K,q)$ be a regular weighted compact set. Then
$$V_{P,K,q}=\lim_{N\to\infty}\frac{1}{N}\log\Phi_N$$ locally uniformly on $\T.$
\end{thm}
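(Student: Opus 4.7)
The plan is to prove the matching inequalities $\phi_N := \tfrac{1}{N}\log\Phi_N \le V_{P,K,q}$ and $\liminf_N \phi_N \ge V_{P,K,q}$ pointwise on $\T$, with the lower bound uniform on compacts so that locally uniform convergence follows by combining them.

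The upper bound is immediate. For any $f_N\in Poly(NP)$ with $\max_K|f_N|e^{-Nq}\le 1$, the expansion $f_N=\sum_{J\in NP\cap\Z}c_Jz^J$ gives $|f_N(z)|\le (\sum_J|c_J|)e^{NH_P(z)}$ on $\T$, so $\tfrac{1}{N}\log|f_N|$ lies in $\mathcal{L}_P$ and is $\le q$ on $K$. Hence $\tfrac{1}{N}\log|f_N|\le V_{P,K,q}$, and taking the supremum over admissible $f_N$ yields $\phi_N\le V_{P,K,q}$ on $\T$.

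The lower bound is the heart of the argument and I plan to obtain it by H\"ormander's $L^2$-method on the Stein manifold $\T$, in a form uniform in the base point. Fix a compact $K'\Subset\T$ and $\epsilon>0$. First I would produce a smooth strictly plurisubharmonic $\psi\in\mathcal{L}_P$ with $\psi\ge V-\epsilon$ on $K'$ and $\psi\le q+\epsilon$ on $K$, via Demailly regularization in logarithmic coordinates together with continuity of $V:=V_{P,K,q}$. Then for each $z_0\in K'$ I would solve $\bar\partial u=\bar\partial(\chi_{z_0}\cdot e^{N\psi(z_0)})$ with a cutoff $\chi_{z_0}$ supported near $z_0$ and H\"ormander weight
$$\Phi=2N\psi(z)+2(m+r)\log|z-z_0|\,\chi_{z_0}(z)+2\epsilon NH_Q(z),$$
where $H_Q$ is the support function of a small convex body $Q\ni 0$ (forcing decay of $e^{-\Phi}$ near the Reinhardt boundary and at infinity) and $0<r\ll 1$. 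The candidate $f_N:=\chi_{z_0}\cdot e^{N\psi(z_0)}-u$ is holomorphic on $\T$ with $f_N(z_0)=e^{N\psi(z_0)}\ge e^{NV(z_0)-N\epsilon}$, since the logarithmic singularity in $\Phi$ forces $u(z_0)=0$. Combining the resulting weighted $L^2$ estimate with $\psi\le H_P+O(1)$ matches the hypothesis of Proposition \ref{poly}, whence $f_N\in Poly(NP)$. Finally, a standard submean-value/Bernstein comparison on $K$, using $\psi\le q+\epsilon$ and continuity of $q$, bounds $\max_K|f_N|e^{-Nq}$ by $e^{O(N\epsilon)}$, and all constants can be arranged uniformly in $z_0\in K'$. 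After rescaling, $f_N$ is a valid competitor in the definition of $\Phi_N(z_0)$, giving $\phi_N(z_0)\ge V(z_0)-C\epsilon$ for all $z_0\in K'$ and all $N\ge N_0(K',\epsilon)$.

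Combining both estimates yields $|\phi_N-V|\le C\epsilon$ on $K'$ for $N$ large, which is precisely locally uniform convergence of $\phi_N$ to $V$ on $\T$. The main obstacle is the H\"ormander step: the weight $\Phi$ must be engineered so that a single $L^2$ estimate simultaneously enforces peaking at $z_0$ (via the $\log|z-z_0|$ term), the support condition $S_{f_N}\subset NP$ via Proposition \ref{poly} (via the $NH_P$-sized growth built into $2N\psi$), and the normalization on $K$ (via $\psi\le q+\epsilon$ on $K$); moreover all of these estimates must be arranged uniformly as $z_0$ ranges over $K'$. The small parameter $r$ in Proposition \ref{poly} provides exactly the slack at the Reinhardt boundary required to accommodate the logarithmic singularity at $z_0$.
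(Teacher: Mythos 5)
Your high-level strategy coincides with the paper's: the easy upper bound $\tfrac1N\log\Phi_N\le V_{P,K,q}$, then H\"ormander $L^2$-theory on the Stein manifold $\T$ to manufacture a competitor $f_N\in Poly(NP)$ peaking at $z_0$, with Proposition~\ref{poly} controlling the Newton polytope and a sub-mean-value estimate giving the normalization on $K$. But there is a genuine gap in how you control the support of $f_N$. With the weight $\Phi=2N\psi+2(m+r)\log|z-z_0|\chi_{z_0}+2\epsilon N H_Q$, the $L^2$ estimate gives $\int|f_N|^2 e^{-\Phi}(1+|z|^2)^{-r'}<\infty$; using $\psi\le H_P+O(1)$ and $H_Q\ge 0$ (true since $0\in Q$, so $\varphi_Q\ge 0$), the best this yields is $\int|f_N|^2 e^{-2N(H_P+\epsilon H_Q)}(1+|z|^2)^{-r''}<\infty$, and Proposition~\ref{poly} then places $S_{f_N}$ inside $N(P+\epsilon Q)$, not $NP$. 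Since $\epsilon$ is fixed while $N\to\infty$, the dilate $N\epsilon Q$ acquires new lattice points, so $f_N$ need not lie in $Poly(NP)$ and cannot be used as a competitor in $\Phi_N(z_0)$. The paper's weight introduces no such $\epsilon N H_Q$ term: it instead exploits Lemma~\ref{helpy} (valid because $V_{P,K,q}\in\mathcal{L}_{P,+}$) to extract the needed decay near the Reinhardt boundary and at infinity from the term $(N-\tfrac{m}{\kappa})(V-\tfrac\epsilon2)+\tfrac{m}{\kappa}\log|z^p|$, keeping the total growth at $NH_P+O(\log)$ so that Proposition~\ref{poly} gives exactly $S_{f_N}\subset NP$. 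A smaller issue in the same formula: $\log|z-z_0|\cdot\chi_{z_0}(z)$ is not plurisubharmonic once multiplied by a cutoff; the paper uses the globally psh function $m\max_j\log|z_j-z_{0,j}|$ instead.

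On locally uniform convergence: you assert ``all constants can be arranged uniformly in $z_0\in K'$,'' which is precisely where the paper takes a different and more robust route. Your claim is plausible (translate a fixed bump by $z_0$, invoke uniform continuity of $V$ on a compact neighborhood of $K'$), but the paper deliberately sidesteps this bookkeeping --- which is especially delicate across the smoothing $V_t\to V$ and the extraction of a limiting $f_N$ --- by deriving only a pointwise estimate with a $z_0$-dependent constant, and then promoting it to locally uniform convergence via the submultiplicativity $\Phi_N\Phi_M\le\Phi_{N+M}$ combined with a finite covering of the compact set. If you keep the shortcut, you must actually track that the H\"ormander bound, the order-of-vanishing argument $u_N(z_0)=0$, and the sub-mean-value constant are simultaneously uniform over $K'$ after regularization; this should be done rather than merely asserted.
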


\begin{proof}
For a given compact set $X\subset \T$ we need to show that for every $\epsilon>0$ there exists $N_0\in\Bbb{N}$ such that
$$0\leq V(z)-\frac{1}{N}\log\Phi_N(z)<\epsilon$$ for every $z\in X$ and $N\geq N_0.$ To this end, fix  $z_0\in X,$ and $B(z_0,\delta)\subset\T$ be a small ball centered at $z_0$ such that for every $z\in B(z_0,\delta)$
\begin{equation}\label{end}
|V(z)-V(z_0)|<\frac{\epsilon}{2}.
\end{equation}
First, we assume that $V$ is a smooth function on $\T.$ We also let $\chi$ be a test function with compact support in $B(z_0,\delta)$ such that $\chi\equiv1$ on $B(z_0,\frac{\delta}{2}).$ For a fixed point $p\in P^{\circ},$ we define  
$$\psi_N(z):=(N-\frac{m}{\kappa})(V(z)-\frac{\epsilon}{2})+\frac{m}{\kappa}\log|z^p|+m\max_{j=1,\dots,m}\log|z_j-z_{0,j}|$$ where $\kappa>0$ is as in Lemma \ref{helpy} and $\frac{m}{\kappa}\ll N.$ Note that $\psi_N$ is psh on $\T,$ smooth away $z_0$ and its Lelong number $\nu(\psi_N,z_0)=m.$ 
Since $\T$ is pseudoconvex by H\"ormander's $L^2$-estimates \cite[\S VIII]{DemBook} for every $r\in(0,1]$ there exists a smooth function $u_N$ on $\T$ such that $\bar{\partial}u_N=\bar{\partial}\chi$ and  
$$\int_{\T}|u_N|^2e^{-2\psi_N}(1+|z|^2)^{-r}dz\leq \frac{4}{r^2}\int_{\T}|\bar{\partial}\chi|^2e^{-2\psi_N}(1+|z|^2)dz.$$
Note that the $(0,1)$ form $\bar{\partial}\chi$ is supported in $B(z_0,\delta)\backslash B(z_0,\frac{\delta}{2})$ therefore both integrals are finite. Since $\nu(\psi_N,z_0)= m$ this implies that $u_N(z_0)=0.$ Moreover, by Lemma \ref{helpy} we obtain
\begin{equation}
\int_{\T}|u_N|^2e^{-2N(V-\frac{\epsilon}{2})}(1+|z|^2)^{-r}dz\leq C_1e^{-2N(V(z_0)-\epsilon)}
\end{equation}
 where $C_1>0$ does not depend on $N.$
Next, we let $f_N:=\chi-u_N.$ Then $f_N$ is a holomorphic function on $\T$ such that $f(z_0)=1.$ Furthermore,
\begin{equation}\label{holo}
\int_{\T}|f_N|^2e^{-2N(V-\frac{\epsilon}{2})}(1+|z|^2)^{-r}dz\leq C_2e^{-2N(V(z_0)-\epsilon)}
\end{equation}
where $C_2>0$ is independent of $N.$ Then using $V\in\mathcal{L}_{P,+}$ again we see that 
$$\int_{\T}|f_N|^2e^{-2NH_P}(1+|z|^2)^{-r}dz<\infty $$ and taking $r>0$ small, Proposition \ref{poly} implies that $f_N$ is a polynomial such that $S_{f_N}\subset NP.$\\ \indent
Finally, if $V$ is not smooth on $\T$ then we approximate $V$ by smooth psh functions $V_{t}:=\varrho_{t}\star V\geq V$ (where $\varrho_t$ is an approximate identity) on an increasing sequence of pseudoconvex domains $\Omega_{t}\Subset \T$ as $t\to0.$ Since $V$ is continuous, $V_{t}$ converges to $V$ locally uniformly. Thus, we obtain functions $f_{N,t}\in\mathcal{O}(\Omega_{t})$ satisfying (\ref{holo}) and extract a convergent subsequence $f_{N,t_k}\to f_N$ as $t_k\to 0$ where $f_N$ is a holomorphic function which satisfies (\ref{holo}) and hence $f_N\in Poly(NP)$.\\ \indent
 Next, we show that a multiple of $f_N$ satisfies the necessary growth condition on $K$. Since $V-q$ is continuous, by compactness of $K\subset \T$ there exists $\rho>0$ such that $K_{\rho}:=\{z\in \T: dist(z,K)<\rho\}\subset \T$ and for every $z\in K$  
 $$|q(y)-q(z)|<\frac{\epsilon}{2}$$
 and
 $$q(y) > V(y)-\frac{\epsilon}{2}$$
 for every $y\in B(z,\rho)\subset \T .$
 Now, let $$C_r:=\min_{z\in K_{\rho}}(1+|z|^2)^{-r}$$
 then applying sub-mean value inequality to subharmonic function $|f_N(z)|^2$ on $B(z,\rho)$  we obtain
 \begin{eqnarray*}
 C_r|f_N(z)|^2e^{-2Nq(z)} &\leq& C_3\int_{B(z,\rho)}|f_N(y)|^2e^{-2N(V(y)-\frac{\epsilon}{2})+(q(z)-q(y))}(1+|y|^2)^{-r}dy\\
 &\leq&C_3\int_{\T}|f_N(y)|^2e^{-2N(V(y)-\epsilon)}(1+|y|^2)^{-r}dy\\
 &\leq& C_4e^{-2N(V(z_0)-\epsilon)} 
 \end{eqnarray*}
where $C_4>0$ is as above independent of $N.$ Thus, replacing $f_N$ by $F_N:=\sqrt{\frac{C_r}{C_4}}e^{N(V(z_0)-\epsilon)}f_N$ we see that 
 $$\max_{z\in K}|F_N(z)e^{-q(z)}|\leq 1.$$ and 
 \begin{equation}\label{end2}
 \frac{1}{N}\log|F_N(z_0)|=V(z_0)-\epsilon+\frac{1}{2N}\log (\frac{C_r}{C_4}).
 \end{equation}
It remains to show uniform convergence on the compact set $X$. To this end, we utilize some ideas from \cite{BloomS}. Choosing $N_0$ large enough such that for $N\geq N_0$
$$ \frac{1}{2N}\log (\frac{C_r}{C_4})<\epsilon,\ \ \ \frac{1}{N}V(z_0)<\epsilon\ \text{and}\
-\epsilon<\frac1N\log\Phi_1(z)<\epsilon$$ for $z\in B(z_0,\delta)$ where  $\delta,\epsilon> 0$ as in (\ref{end}). 
Moreover, by shrinking $B(z_0,\delta)$ if necessary we may assume that 
$$\frac{1}{N_0}\log\Phi_{N_0}(z_0)-\frac{1}{N_0}\log\Phi_{N_0}(z)<\frac{\epsilon}{2}$$ 
on $B(z_0,\delta).$ Then for $N\geq N^2_0$ writing $N=kN_0+j$ with $0\leq j< N_0$ we obtain 
\begin{eqnarray*}
\frac1N\log\Phi_N &\geq& \frac{1}{kN_0+j}\log\Phi_{kN_0}+\frac{1}{kN_0+j}\log \Phi_j\\
&\geq& \frac{k}{kN_0+j}\log\Phi_{N_0}+\frac{j}{kN_0+j}\log\Phi_1\\
&\geq& \frac{1}{N_0+\frac{j}{k}}\log\Phi_{N_0}-\epsilon 
\end{eqnarray*}
on $B(z_0,\delta).$ Then 
\begin{eqnarray*}
V(z)-\frac{1}{N}\log\Phi_N &\leq& V(z)- \frac{1}{N_0+\frac{j}{k}}\log\Phi_{N_0}+\epsilon \\
&\leq& V(z)- \frac{1}{N_0}\log\Phi_{N_0}+ \frac{j}{N_0(kN_0+j)}V(z)+\epsilon \\
&\leq& V(z)-\frac{1}{N_0}\log \Phi_{N_0}+2\epsilon .
\end{eqnarray*}
Now, by (\ref{end2}) for $N\geq N_0$ 
\begin{eqnarray*}
0&\leq& V(z)-\frac{1}{N}\log\Phi_{N}(z) \\
&\leq& (V(z_0)-\frac{1}{N_0}\log\Phi_{N_0}(z_0))+(V(z)-V(z_0)) + (\frac{1}{N_0}\log\Phi_{N_0}(z_0)-\frac{1}{N_0}\log\Phi_{N_0}(z))+2\epsilon\\
&\leq& 2\epsilon+\frac{\epsilon}{2}+\frac{\epsilon}{2}+2\epsilon=5\epsilon
\end{eqnarray*}
for $z\in B(z_0,\delta)$ and $N\geq N^2_0.$ Finally, covering the compact set $X$ with finitely many $B(z_i,\delta_i)$ we see that 
$$0\leq V-\frac1N\log\Phi_N \leq 5\epsilon\ \text{for every}\ N\geq\max_iN^2_i.$$
on $X.$ This finishes the proof.
\end{proof}

\subsection{Bernstein-Markov measures}\label{BMp}
 Next, we turn our attention to $L^2$ space of weighted polynomials. A measure $\tau$ supported in $K$ is called a \textit{Bernstein-Markov measure} for the triple $(P,K,q)$ if it satisfies the weighted Bernstein-Markov inequality: there exists constants $M_N>0$ such that for every $f_N\in Poly(NP)$
$$\max_K|f_Ne^{-Nq}|\leq M_N\|f_Ne^{-Nq}\|_{L^2(\tau)}$$
and $\limsup_{N\to \infty}(M_N)^{\frac1N}=1.$ This roughly means that $\sup$-norm and $L^2(\tau)$-norm on $Poly(NP)$ are asymptoticly equivalent. 
We remark that if $P\subset p\Sigma$ then any BM measure (for polynomials of degree at most N) induces a BM measure for our setting. For instance, for $P=\Sigma$ it follows from  \cite{NgZ} that the complex Monge-Amp\'ere of the unweighted (i.e. $q\equiv0$) global extremal function $V^*_{K}$  of a regular compact set $K$ satisfies BM inequality.\\ \indent 
 Next, we fix an orthonormal basis $\{F_j\}_{j=1}^{d_N}$ for $Poly(NP)$ with respect the inner product induced from $L^2(e^{-2Nq}\tau).$ Then associated Bergman kernel is given by
$$S_N(z,w)=\sum_{j=1}^{d_N}F_j(z)\overline{F_j(w)}$$ where $d_N=\dim Poly(NP)$.

The following result was proved in \cite[Lemma 3.4]{BloomS} for the case $P=\Sigma.$ Their argument generalizes to our setting mutatis-mutandis.

\begin{prop}\label{BS}
Let $P$ be an integral polytope with non-empty interior, $K\subset \T$ be a compact set and $q:\T\to\Bbb{R}$ be continuous weight function such that $V:=V_{P,K,q}$ is continuous. If $\tau$ be a BM measure supported on $K$ then $$\frac{1}{2N}\log S_N(z,z)\to V_{P,K,q}(z)$$ uniformly on compact subsets of $\T$
\end{prop}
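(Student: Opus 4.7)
The plan is to sandwich $\frac{1}{2N}\log S_N(z,z)$ between two expressions involving $\frac{1}{N}\log \Phi_N$, where $\Phi_N$ is the Siciak--Zaharyuta type extremal quantity from Theorem \ref{extremal}, and then conclude using the locally uniform convergence $\frac{1}{N}\log\Phi_N\to V_{P,K,q}$ together with the Bernstein--Markov hypothesis $M_N^{1/N}\to 1$. The starting point is the standard extremal characterization of the diagonal Bergman kernel,
\[
S_N(z,z)=\sup\bigl\{|f(z)|^2 : f\in Poly(NP),\ \|f\|_{L^2(e^{-2Nq}\tau)}\leq 1\bigr\},
\]
which follows from the reproducing property $f(z)=\langle f,S_N(\cdot,z)\rangle$ and Cauchy--Schwarz, with equality achieved by the normalized reproducing kernel.

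For the upper bound, I would take any $f\in Poly(NP)$ with $\|f\|_{L^2(e^{-2Nq}\tau)}\leq 1$. The weighted BM inequality yields $\max_K|fe^{-Nq}|\leq M_N$, so $f/M_N$ is admissible in the definition of $\Phi_N$ and hence $|f(z)|\leq M_N\Phi_N(z)$ for every $z\in\T$. Taking the supremum over all such $f$ gives $S_N(z,z)\leq M_N^2\,\Phi_N(z)^2$. For the lower bound, for each fixed $z\in\T$ I would choose $f_N\in Poly(NP)$ nearly attaining the sup defining $\Phi_N(z)$, i.e.\ with $\max_K|f_Ne^{-Nq}|\leq 1$ and $|f_N(z)|\geq(1-\epsilon)\Phi_N(z)$. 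Then $\|f_N\|_{L^2(e^{-2Nq}\tau)}^2\leq \tau(K)\bigl(\max_K|f_Ne^{-Nq}|\bigr)^2\leq \tau(K)$, so the normalization of $f_N$ to unit $L^2$-norm is admissible in the extremal characterization of $S_N(z,z)$ and one obtains $(1-\epsilon)^2\Phi_N(z)^2\leq \tau(K)S_N(z,z)$. Letting $\epsilon\to 0$ gives $\Phi_N(z)^2\leq \tau(K)S_N(z,z)$.

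Combining, for every $z\in\T$ one has
\[
\frac{1}{N}\log\Phi_N(z)-\frac{\log \tau(K)}{2N}\leq \frac{1}{2N}\log S_N(z,z)\leq \frac{1}{N}\log\Phi_N(z)+\frac{\log M_N}{N}.
\]
Since $M_N^{1/N}\to 1$ by the definition of BM and Theorem \ref{extremal} gives $\frac{1}{N}\log\Phi_N\to V_{P,K,q}$ locally uniformly on $\T$, the squeeze yields $\frac{1}{2N}\log S_N(z,z)\to V_{P,K,q}(z)$ locally uniformly. The genuine analytical content has already been extracted in Theorem \ref{extremal}; the present argument is a direct sandwich in the spirit of \cite{BloomS}, and no serious obstacle arises beyond bookkeeping the BM constants and confirming that the upper bound inherits its local uniformity from $\frac{1}{N}\log\Phi_N$ via a pointwise comparison.
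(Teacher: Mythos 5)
Your proof is correct and follows exactly the approach the paper invokes: the paper simply cites \cite[Lemma 3.4]{BloomS} ``mutatis-mutandis,'' and your sandwich argument — comparing $\sqrt{S_N(z,z)}$ to $\Phi_N(z)$ from above via the Bernstein--Markov constant $M_N$ and from below via $\tau(K)^{1/2}$ (which is finite and positive since $K$ is compact and $\tau$ is a BM measure), then applying Theorem~\ref{extremal} — is precisely the standard Bloom--Shiffman argument being referenced.
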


\section{Expected distribution of zeros}
Recall that if $P\subset\R$ is an integral polytope then 
\begin{equation}\label{Eh}
\#(NP\cap\Z)=\dim(Poly(NP))=Vol(P)N^m+o(N^m)
\end{equation} where the latter is known as Ehrhart polynomial of $P$ \cite{Eh}.   \\ \indent  We identify each $f_N\in Poly(NP)$ with a point in $\Bbb{C}^{d_N}$ by 
$$\Psi_N:Poly(NP)\to \Bbb{C}^{d_N}$$
$$f_N=\sum_{j=1}^{d_N}a_j^NF_j\to a^N:=(a_j^N).$$
First, we prove that conditions $(A1),(A2)$ and $(A3)$ hold for random sparse polynomials with iid coefficients under a mild moment condition:
\begin{prop}[iid coefficients]\label{iid} Assume that $a_j$ are iid complex (or real) valued random variables whose distribution law $\textbf{P}$ is of the form $\textbf{P}=\phi(z)dz$ (or $\textbf{P}=\phi(x)dx$) where $\phi$ is a real valued bounded function satisfying 
$$\textbf{P}\{z\in\C: \log|z|>R\}\leq \frac{C}{R^{\rho}}\ \text{for}\ R\geq1$$ for some $\rho>m+1.$ Then the $d_N$-fold product measure $\sigma_N$ on $\Bbb{C}^{d_N}$ induced by $\textbf{P}$ satisfies conditions $(A1),(A2)$ and $(A3).$
\end{prop}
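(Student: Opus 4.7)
The strategy is to verify $(A1)$, $(A2)$, $(A3)$ separately, leveraging three inputs: the dimension bound $d_N=O(N^m)$ from (\ref{Eh}); the tail hypothesis, rewritten as $\p(\log|a_j|>s)\leq C s^{-\rho}$ for $s\geq 1$; and a small-ball estimate on linear functionals that follows from the boundedness of $\phi$. For any unit vector $u\in S^{2d_N-1}$, pick a coordinate $j_0$ with $|u_{j_0}|^{2}\geq 1/d_N$; conditioning on the remaining coordinates, $\la a,u\ra$ is an affine function of $a_{j_0}$ whose density is bounded by $\|\phi\|_{\infty}/|u_{j_0}|^{2}\leq \|\phi\|_{\infty}d_N$ (the real case is analogous). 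This yields
\begin{equation*}
\sigma_N(|\la a,u\ra|<r)\leq C_1\,d_N\,r^{2}\qquad (r>0)
\end{equation*}
uniformly in $u$.

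Condition $(A3)$ is then immediate: $\sigma_N(\log|\la a,u\ra|<-Nt)\leq C_1 d_N e^{-2Nt}=O(N^m e^{-2Nt})$ is summable for every $t>0$. For $(A2)$, use $\|a\|\leq\sqrt{d_N}\max_j|a_j|$ together with a union bound: for $N$ large enough that $N\epsilon-\tfrac12\log d_N\geq N\epsilon/2$,
\begin{equation*}
\sigma_N(\log\|a\|>N\epsilon)\leq d_N\,\p\!\left(\log|a_j|>N\epsilon-\tfrac12\log d_N\right)\leq C\,d_N(N\epsilon/2)^{-\rho}=O(N^{m-\rho}),
\end{equation*}
which is summable precisely because $\rho>m+1$.

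For $(A1)$, split $\log|\la a,u\ra|=\log^{+}|\la a,u\ra|-\log^{-}|\la a,u\ra|$. The positive part is dominated by $\log^{+}\|a\|\leq\tfrac12\log d_N+\max_j\log^{+}|a_j|$; the tail bound gives $\p(\max_j\log^{+}|a_j|>s)\leq d_N Cs^{-\rho}$, and integrating yields $\Bbb{E}[\max_j\log^{+}|a_j|]=O(d_N^{1/\rho})=O(N^{m/\rho})=o(N)$, since $\rho>m+1$ forces $m/\rho<1$. The negative part is handled by the small-ball estimate:
\begin{equation*}
\int\log^{-}|\la a,u\ra|\,d\sigma_N=\int_0^{\infty}\sigma_N(|\la a,u\ra|<e^{-s})\,ds\leq \int_0^{\infty}\min(1,C_1 d_N e^{-2s})\,ds=O(\log d_N)=o(N).
\end{equation*}
Both bounds are uniform in $u\in S^{2d_N-1}$, establishing $(A1)$. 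The main delicate step is the $O(N^{m/\rho})$ moment estimate for $\max_j\log^{+}|a_j|$: the assumption $\rho>m+1$ is exactly sharp here and also for the summability of $N^{m-\rho}$ in $(A2)$, while $(A3)$ and the $\log^{-}$ half of $(A1)$ are considerably more robust in $\rho$.
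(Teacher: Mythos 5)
Your proof is correct, and for conditions $(A2)$ and $(A3)$ it follows essentially the same route as the paper: a union bound combined with the tail hypothesis for $(A2)$, and the bounded-density small-ball estimate for $(A3)$ (the paper phrases the latter as a change of variables integrating out the large coordinate $a_1$; your conditioning argument is the same thing in different clothing). The one genuine difference is $(A1)$: the paper dispatches it by citing \cite[Lemma 3.1]{B6} as a black box, whereas you give a self-contained argument by splitting $\log|\la a,u\ra|=\log^{+}|\la a,u\ra|-\log^{-}|\la a,u\ra|$, bounding $\Bbb{E}[\log^{+}|\la a,u\ra|]\leq\frac12\log d_N+\Bbb{E}[\max_j\log^{+}|a_j|]=O(d_N^{1/\rho})=o(N)$ via the tail hypothesis, and bounding $\Bbb{E}[\log^{-}|\la a,u\ra|]=O(\log d_N)=o(N)$ by reusing the small-ball estimate already derived for $(A3)$. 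This makes the proposition self-contained and cleanly exposes where each part of the hypothesis enters: $\rho>m$ already suffices for the $\log^{+}$ moment bound, $\rho>m+1$ is needed only for the summability in $(A2)$, and the boundedness of $\phi$ is used only for the small-ball estimate. One minor correction: in the real case the Jacobian factor is $1/|u_{j_0}|$ rather than $1/|u_{j_0}|^2$, so the small-ball bound degrades to $O(\sqrt{d_N}\,r)$; this weaker estimate still gives $O(\sqrt{d_N}\,e^{-Nt})$ in $(A3)$ and $O(\log d_N)$ for the $\log^{-}$ half of $(A1)$, so none of the conclusions are affected.
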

\begin{proof}
$(A1)$ is a direct consequence of \cite[Lemma 3.1]{B6}. In order to show $(A2)$ holds, we note that
 for $N\gg 1$ and $\epsilon>0$ 
\begin{eqnarray*}
\sigma_N\{a\in \Bbb{C}^{d_N}: \|a\|>e^{\epsilon N}\} &\leq& \sigma_N\{a\in \Bbb{C}^{d_N}:\|a\|>\sqrt{d_N}e^{\frac{\epsilon N}{2}}\}\\
&\leq & \sigma_N\{a\in \Bbb{C}^{d_N}:|a_j|>e^{\frac{\epsilon}{2}N}\ \text{for some}\ j\}\\
&\leq& \frac{C_{\epsilon}d_N}{N^{\rho}}
\end{eqnarray*}
where the latter is summable. 

\par Finally, for $u\in S^{2d_N-1}$ fixed we may assume that $|u_1|\geq \frac{1}{\sqrt{d_N}}$ and applying the change of variables $w_1=\sum_{j=1}^{d_N}a_ju_j, w_2=a_2,\dots, w_{d_N}=a_{d_N}$ we see that
\begin{eqnarray*}
\sigma_N\{a\in \Bbb{C}^{d_N}: |\la a,u\ra|<e^{-tN}\} &=&\int_{\Bbb{C}^{d_N-1}}\int_{|w_1|\leq e^{-tN}}\frac{1}{|u_1|^2}\phi(\frac{w_1-w_2 u_2-\dots -w_{d_N} u_{d_N}}{u_1})d\lambda(w_1) d\sigma_{N-1}\\
&\leq & C\pi d_Ne^{-2tN}.
\end{eqnarray*}
Since the latter is summable $(A3)$ follows.
\end{proof}
Let $X$ be a complex manifold and $\sigma$ be a positive measure on $X$. Following \cite{DNS}, we say that $\sigma$ is (locally) moderate if for any open set $U\subset X,$ compact set $K\subset U$ and a compact family $\mathscr{F}$ of psh functions there exists constants $c,\alpha>0$ such that
\begin{equation}\label{mode1}
\int_Ke^{-\alpha\psi}d\sigma\leq c\ \ \forall \psi\in\mathscr{F}.
\end{equation}  
The existence of $c,\alpha$ in (\ref{mode1}) is equivalent to existence of $c'\alpha'>0$ satisfying
\begin{equation}\label{mode2}
\sigma\{z\in K: \psi(z)<-t\}\leq c'e^{-\alpha't}
\end{equation}
for $t\geq0$ and $\psi\in\mathscr{F}.$
Next, we observe that moderate measures also fall into frame work of our main results.
\begin{prop}[Moderate measures]\label{moderate}
Let $\sigma_N$ be a moderate measure supported on $S^{2d_N-1}$ then $\sigma_N$ satisfies conditions $(A1),(A2)$ and $(A3)$.
\end{prop}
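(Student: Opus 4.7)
The plan is to treat the three conditions separately: since each $\sigma_N$ is supported on the unit sphere $S^{2d_N-1}\subset\Cdn$, the function $a\mapsto\log\|a\|$ vanishes $\sigma_N$-almost everywhere, so $\{\log\|a\|>N\epsilon\}$ has $\sigma_N$-measure zero for every $\epsilon>0$ and $N$, and $(A2)$ is immediate; conditions $(A1)$ and $(A3)$ will then both follow from a single application of the moderate property to a well-chosen compact family of plurisubharmonic functions on $\Cdn$.

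For $(A3)$, fix $u\in S^{2d_N-1}$ and set $\psi_u(a):=\log|\la a,u\ra|$, a psh function on $\Cdn$ whose restriction to $S^{2d_N-1}$ is nonpositive by Cauchy--Schwarz. As $u$ ranges over the compact sphere the family
$$\mathscr{F}_N:=\{\psi_u:u\in S^{2d_N-1}\}$$
is compact (in $L^1_{loc}$), so applying the moderate property of $\sigma_N$ in its tail form (\ref{mode2}) yields constants $c,\alpha>0$, uniform in $u$, such that
$$\sigma_N\{a\in\Cdn:\psi_u(a)<-t\}\leq c\,e^{-\alpha t}.$$
Specializing $t=Nt_0$ gives a summable bound $c\,e^{-\alpha Nt_0}$ for any fixed $t_0>0$, proving $(A3)$. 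For $(A1)$, the layer-cake identity combined with $\psi_u\leq0$ on $\mathrm{supp}(\sigma_N)$ yields
$$\Bigl|\int_{\Cdn}\log|\la a,u\ra|\,d\sigma_N(a)\Bigr|=\int_0^\infty\sigma_N\{-\psi_u>t\}\,dt\leq\int_0^\infty c\,e^{-\alpha t}\,dt=\frac{c}{\alpha},$$
a bound independent of $u$ and of $N$, hence trivially $o(N)$.

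The main obstacle is the uniformity of the moderateness constants $c,\alpha$ as the ambient dimension $d_N$ grows. The definition in (\ref{mode1})--(\ref{mode2}) produces constants attached to each individual pair $(\sigma_N,\mathscr{F}_N)$, but one needs them controlled across $N$ (sublinear growth of $c/\alpha$ suffices for $(A1)$, while any exponential tail with $\alpha$ bounded away from zero suffices for $(A3)$). This is handled by the structure of $\mathscr{F}_N$: its members are normalized linear logarithms parameterized by a unit sphere, a form that is essentially invariant across $N$, so the moderateness bounds extend uniformly and the estimates above close the proof.
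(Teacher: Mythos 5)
Your argument follows essentially the same route as the paper: $(A2)$ is immediate from the support lying on the unit sphere, $(A3)$ comes from applying the tail form (\ref{mode2}) of moderateness to the compact family $\mathscr{F}_N=\{\psi_u:u\in S^{2d_N-1}\}$ of psh functions $\psi_u(a)=\log|\la a,u\ra|$, which are nonpositive on the sphere, and $(A1)$ then follows from a layer-cake integration of the resulting exponential tail (your version is in fact cleaner than the paper's displayed inequality, which is off by a factor of $N$ in the substitution $s=Nt$, though this does not change the conclusion).

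What you flag explicitly in your final paragraph — that the moderateness constants $c,\alpha$ are a priori attached to each pair $(\sigma_N,\mathscr{F}_N)$ separately, while both $(A1)$ and $(A3)$ genuinely require control of $c_N/\alpha_N$ across $N$ — is a real concern, and the paper silently passes over it: its proof produces $C,\alpha$ for each $N$ and then simply asserts that $(A3)$ and $(A1)$ follow. However, your closing sentence (``the moderateness bounds extend uniformly because the structure of $\mathscr{F}_N$ is essentially invariant across $N$'') is not a proof; the families $\mathscr{F}_N$ live on spheres $S^{2d_N-1}$ in spaces of growing dimension, and nothing in the definition (\ref{mode1})--(\ref{mode2}) forces the constants for distinct ambient dimensions to be comparable. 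If the intended hypothesis is that the $\sigma_N$ are uniformly moderate (a common convention in the Dinh--Nguyen--Sibony framework), say so and the argument closes; as it stands, you have correctly located the gap but have only gestured at, rather than supplied, its resolution.
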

\begin{proof}
Since $supp(\sigma_N)\subset S^{2d_N-1}$ condition $(A2)$ is automatically satisfied. Moreover, for every $u\in S^{2d_N-1}$ the function $\psi_u:\Bbb{C}^{d_N}\to \R$
$$ \psi_u(w)=\log|\langle w,u\rangle|$$ is psh and $\sup_{S^{2d_N-1}}\psi_u=0.$ Since $\sigma_N$ is moderate, letting $\mathscr{F}=\{\psi_u:u\in S^{2d_N-1}\}$ it follows from (\ref{mode2}) that there exists $C,\alpha>0$ such that
$$\sigma_N\{w\in \Bbb{C}^{d_N}:\log |\langle w,u\rangle|<-R\}\leq Ce^{-\alpha R}\ \text{for}\ R>0$$ for every $u\in S^{2d_N-1}.$ This verifies $(A3).$ 

Since,
$$\int_{\Bbb{C}^{d_N}}|\log|\la a,u\ra||d\sigma_N(a)\leq 1+\int_0^{\infty} \sigma_N\{a\in \Bbb{C}^{d_N}: |\la a,u\ra|<e^{-tN}\} dt $$
$(A1)$ follows.
\end{proof}

For a complex manifold $Y$ we denote the set of bidegree $(m-k,m-k)$ test forms i.e. smooth forms with compact support by $\mathcal{D}_{m-k,m-k}(Y).$ Then a bidegree $(k,k)$ current is a continuous linear functional on $\mathcal{D}_{m-k,m-k}(Y)$ with respect to the weak topology. We denote the set of bidegree $(k,k)$ currents by $\mathcal{D}^{k,k}(Y).$ We refer the reader to the manuscript \cite{DemBook} for detailed information regarding the theory of currents.\\ \indent
For each $f_N\in Poly(NP)$ we let $[Z_{f_N}]$ denote the current of integration along regular points of the zero locus of $f_N$ and denote the action of it on a test form $\Theta\in \mathcal{D}_{m-1,m-1}(Y)$ by $\langle [Z_{f_N}],\Theta\rangle.$ Then the \textit{expected zero current} of random Laurent polynomials $f_N\in Poly(NP)$ was defined in the introduction by
$$\langle\Bbb{E}[Z_{f_N}],\Theta\rangle=\int_{Poly(NP)}\langle[Z_{f_N}],\Theta\rangle d\sigma_N(f_N).$$

Next lemma provides a link between Bergman kernels and expected distribution of zeros of random sparse polynomials. 
\begin{prop}\label{helpex}
Let $P\subset\R$ be an integral polytope with non-empty interior then there exists a real closed $(1,1)$ current $T_N \in\mathcal{D}^{(1,1)}(\T)$ such that
for every test form $\Theta\in\mathcal{D}_{(m-1,m-1)}(\T)$ 
$$\frac1N\langle\Bbb{E}[Z_{f_N}],\Theta\rangle=\frac{1}{2N}\langle dd^c(\log S_N(z,z)),\Theta\rangle+\langle T_N,\Theta\rangle$$
and $T_N\to 0$ weakly as $N\to \infty.$ In particular,
$$\frac1N\Bbb{E}[Z_{f_N}] \to dd^cV_{P,K,q}$$ weakly as $N\to \infty.$
\end{prop}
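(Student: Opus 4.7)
The plan is to combine the Poincar\'e--Lelong formula with the Bergman kernel asymptotics of Proposition \ref{BS}, using condition $(A1)$ to kill the coefficient-dependent fluctuation term.

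\textbf{Step 1 (decomposition).} Fix an ONB $\{F_j\}_{j=1}^{d_N}$ of $Poly(NP)$ and let $F(z):=(F_1(z),\dots,F_{d_N}(z))$, so that $S_N(z,z)=\|F(z)\|^2$. For $z\in\T$ with $S_N(z,z)\neq 0$, set
$$u(z):=\overline{F(z)}/\|F(z)\|\ \in\ S^{2d_N-1}.$$
Then $f_N(z)=\langle a,u(z)\rangle\,\|F(z)\|$, so away from the (pluripolar) set where $S_N$ vanishes,
$$\log|f_N(z)|=\log|\langle a,u(z)\rangle|+\tfrac{1}{2}\log S_N(z,z).$$
Applying the Poincar\'e--Lelong formula $[Z_{f_N}]=dd^c\log|f_N|$ gives
$$[Z_{f_N}]=dd^c_z\log|\langle a,u(z)\rangle|+\tfrac{1}{2}dd^c_z\log S_N(z,z).$$

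\textbf{Step 2 (averaging).} Set
$$I_N(z):=\int_{\mathbb{C}^{d_N}}\log|\langle a,u(z)\rangle|\,d\sigma_N(a).$$
Using Fubini (justified by the fact that $\log|\langle a,u(z)\rangle|$ is locally integrable jointly in $(a,z)$, e.g.\ because $a\mapsto\log|\langle a,u(z)\rangle|$ is psh with logarithmic singularities on a hyperplane and $\sigma_N$-integrable by $(A1)$), one obtains
$$\tfrac{1}{N}\mathbb{E}[Z_{f_N}]=\tfrac{1}{N}dd^c I_N(z)+\tfrac{1}{2N}dd^c\log S_N(z,z)$$
as real closed $(1,1)$-currents on $\T$. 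Define
$$T_N:=\tfrac{1}{N}dd^c I_N,$$
which furnishes the current in the statement.

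\textbf{Step 3 (vanishing of $T_N$).} Since $u(z)\in S^{2d_N-1}$ pointwise, hypothesis $(A1)$ gives
$$\sup_{z\in\T}|I_N(z)|\ \leq\ \sup_{u\in S^{2d_N-1}}\left|\int_{\mathbb{C}^{d_N}}\log|\langle a,u\rangle|\,d\sigma_N(a)\right|\ =\ o(N),$$
so $\tfrac{1}{N}I_N\to 0$ uniformly on $\T$. For any test form $\Theta\in\mathcal{D}_{m-1,m-1}(\T)$, integration by parts yields
$$\langle T_N,\Theta\rangle=\langle\tfrac{1}{N}I_N,\,dd^c\Theta\rangle\longrightarrow 0,$$
so $T_N\to 0$ weakly.

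\textbf{Step 4 (conclusion).} By Proposition \ref{BS}, $\tfrac{1}{2N}\log S_N(z,z)\to V_{P,K,q}(z)$ locally uniformly on $\T$; since $dd^c$ is weakly continuous on locally uniformly convergent sequences of psh (or merely $L^1_{loc}$) functions, $\tfrac{1}{2N}dd^c\log S_N(z,z)\to dd^cV_{P,K,q}$ weakly. Combining with $T_N\to 0$ gives $\tfrac{1}{N}\mathbb{E}[Z_{f_N}]\to dd^c V_{P,K,q}$ weakly on $\T$.

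\textbf{Expected obstacle.} The main technical point is the interchange of $dd^c$ with the $\sigma_N$-integral in Step 2 and the corresponding pointwise-everywhere meaning of $I_N(z)$: one must check that $(a,z)\mapsto\log|\langle a,u(z)\rangle|$ is locally integrable against $\sigma_N\otimes dV$, which requires controlling the logarithmic singularity of the random function $a\mapsto\log|\langle a,u(z)\rangle|$ uniformly in $z$ on compacta. This is where the tail/moment behavior of $\sigma_N$ implicit in $(A1)$ is used; once this local integrability is in hand, the rest of the argument is essentially formal.
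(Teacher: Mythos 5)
Your proof is correct and follows essentially the same route as the paper: decompose $\log|f_N|=\log|\langle a,u_N(z)\rangle|+\frac12\log S_N(z,z)$ via Poincar\'e--Lelong, apply Fubini, identify $T_N$ as $\frac1N dd^c$ of the averaged fluctuation term $I_N(z)$, and bound $\sup_z|I_N(z)|$ by condition $(A1)$ before invoking Proposition~\ref{BS}. The paper likewise leaves the joint integrability needed for Fubini implicit, so your remark flagging it as the one nontrivial point is apt but does not signal a departure.
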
 
\begin{proof}
It follows from Poincar\'e-Lelong formula that
$$[Z_{f_N}]=dd^c\log|f_N|.$$
 Writing $f_N=\sum_{j=1}^{d_N}a_jF^N_j=:\langle (a^N),(F^N_j)\rangle$ where $\{F^N_j\}$ is the fixed ONB for $Poly(NP)$ and letting $u_N(z):=\big(\frac{F^N_1(z)}{\sqrt{S_N(z,z)}},\dots, \frac{F^N_{d_N}(z)}{\sqrt{S_N(z,z)}}\big)$ for $z\in\T,$ by Fubini's theorem we obtain
\begin{eqnarray*}
\frac1N\langle\Bbb{E}[Z_{f_N}],\Theta\rangle&=& \int_{\Bbb{C}^{d_N}}\langle \frac{1}{2N}dd^c\log S_N(z,z),\Theta\rangle d\probn(a^N)+\frac1N\int_{\T}dd^c\Theta\int_{\Bbb{C}^{d_N}}\log|\langle a^N,u_N(z)\rangle| d\probn(a^N) \\
&=:& \frac{1}{2N}\langle dd^c(\log S_N(z,z)),\Theta\rangle+\langle T_N,\Theta\rangle
\end{eqnarray*}
Moreover,
$$|\langle T_N,\Theta\rangle| \leq \frac1N\|dd^c\Theta\|_{\infty} \sup_{u\in S^{2d_N-1}}|\int_{\Bbb{C}^{d_N}}\log|\la a,u\ra|d\sigma_N(a)|$$ where $\|dd^c\Theta\|_{\infty}$ denotes the sum of the sup norms of the coefficients of the smooth form $dd^c\Theta$. Thus, the first assertion follows from (A1).
\par Now, the second assertion is an immediate consequence of Proposition \ref{BS}
\end{proof}
For an algebraic submanifold $Y\subset \T$ we let $Z_{f_{|Y}}:=\{z\in Y:f(z)=0\}$ denote the restriction of the zero locus of $f$ on $Y.$ The following is a well known probabilistic version of Poincar\'e-Lelong formula (see \cite[\S 5]{SZ1} and \cite[\S 3]{B6}): 
\begin{prop}\label{propex}
The expected zero current of independent random Laurent polynomials $f^i_N\in Poly(NP_i)$, $1\leq i\leq k$ is given by
$$\Bbb{E}[Z_{f_N^1,\dots,f_N^k}]= \bigwedge_{i=1}^k\Bbb{E} [Z_{f_N^i}]$$
\end{prop}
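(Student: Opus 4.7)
The plan is to prove the identity by induction on $k$ by a Fubini-plus-integration-by-parts argument, exploiting independence of the $f_N^i$ under the product measure and the Bedford--Taylor regularity of the intersection for generic systems. The case $k=1$ is immediate from the Poincar\'e--Lelong formula (as already recorded in the proof of Proposition \ref{helpex}). Suppose the statement holds for $k-1$. By Bertini's theorem, off a set of measure zero in $\prod_{i}Poly(NP_i)$ the zero divisors $Z_{f_N^i}$ are smooth and intersect transversally, so
$$[Z_{f_N^1,\dots,f_N^k}] = dd^c\log|f_N^1|\wedge \cdots \wedge dd^c\log|f_N^k|$$
on $\T$, with locally finite mass controlled by Remark \ref{rem}.

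Fix $\Theta\in \mathcal{D}_{m-k,m-k}(\T)$ with compact support $K_\Theta\Subset \T$. For generic $(f_N^2,\dots,f_N^k)$ set
$$T:=dd^c\log|f_N^2|\wedge\cdots\wedge dd^c\log|f_N^k|,$$
a closed positive current of bidegree $(k-1,k-1)$ with locally finite mass. Because $T$ is closed and $\Theta$ has compact support, Stokes' theorem gives
$$\langle dd^c\log|f_N^1|\wedge T,\Theta\rangle=\langle \log|f_N^1|\cdot T,dd^c\Theta\rangle=\int_{\T}\log|f_N^1|\,T\wedge dd^c\Theta.$$
The right-hand side is absolutely integrable in $f_N^1$ over $Poly(NP_1)$ with respect to $\sigma_N$: the Bernstein--Markov property together with the continuity of $V_{P_1,K,q}$ yields a uniform $L^1_{loc}$ bound for $\log|f_N^1|$ on $K_\Theta$, while condition $(A1)$ provides integrability of the $\log|\cdot|$ against $\sigma_N$ (as used in Proposition \ref{helpex}). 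This legitimizes Fubini, so integrating against $\sigma_N(f_N^1)$ yields
$$\int_{Poly(NP_1)}\langle dd^c\log|f_N^1|\wedge T,\Theta\rangle\,d\sigma_N(f_N^1)=\int_{\T}\Bbb{E}[\log|f_N^1|]\,T\wedge dd^c\Theta.$$

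The function $u(z):=\Bbb{E}[\log|f_N^1|]$ is plurisubharmonic on $\T$ (a $\sigma_N$-average of psh functions, locally bounded above by the same BM/extremal-function estimates), and reversing Stokes yields $dd^c u = \Bbb{E}[Z_{f_N^1}]$ in the sense of currents on $\T$, which is exactly the content of Proposition \ref{helpex}. Consequently
$$\int_{\T}u\,T\wedge dd^c\Theta=\langle dd^c u\wedge T,\Theta\rangle=\langle \Bbb{E}[Z_{f_N^1}]\wedge T,\Theta\rangle,$$
where the wedge is well defined because $\Bbb{E}[Z_{f_N^1}]$ has a locally bounded psh potential $u$ and $T$ is a closed positive current to which Bedford--Taylor theory applies. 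Using the product structure of the probability measure and integrating now over $(f_N^2,\dots,f_N^k)$, we obtain
$$\langle \Bbb{E}[Z_{f_N^1,\dots,f_N^k}],\Theta\rangle=\langle \Bbb{E}[Z_{f_N^1}]\wedge \Bbb{E}[Z_{f_N^2,\dots,f_N^k}],\Theta\rangle,$$
and the inductive hypothesis completes the proof.

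The main obstacle I expect is the legitimacy of the two interchanges above: the Fubini step requires joint integrability of $\log|f_N^1|\cdot T\wedge dd^c\Theta$, which rests on controlling $\log|f_N^1|$ in $L^1$ uniformly on $K_\Theta$ (via the BM/Bergman estimates and $(A1)$), and the reverse Stokes step needs $dd^c u\wedge T$ to make Bedford--Taylor sense. Both ingredients are at hand: $u$ is bounded psh locally (its $dd^c$ is the finite-mass current $\Bbb{E}[Z_{f_N^1}]$ produced in Proposition \ref{helpex}), and $T$ is closed positive with locally bounded potentials on the generic set where the $f_N^i$, $i\ge 2$, are smooth and transverse.
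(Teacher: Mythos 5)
The paper records Proposition \ref{propex} with only a citation to \cite[\S5]{SZ1} and \cite[\S3]{B6} and gives no proof, so your attempt is a reconstruction rather than a comparison with the paper's own argument. As such it is essentially correct and provides exactly the independence/Fubini/integration-by-parts argument one expects: the induction passes through the key observation that $u:=\Bbb{E}[\log|f_N^1|]=\frac{1}{2}\log S_N(z,z)+\int\log|\la a,u_N(z)\ra|d\sigma_N(a)$ is a \emph{locally bounded} psh potential for $\Bbb{E}[Z_{f_N^1}]$ (bounded by $(A1)$, psh as an integrable average of psh functions), which is precisely what licenses the Bedford--Taylor wedge products $\Bbb{E}[Z_{f_N^1}]\wedge T$ and $\Bbb{E}[Z_{f_N^1}]\wedge \Bbb{E}[Z_{f_N^2,\dots,f_N^k}]$ and the Stokes identities that turn them into scalar integrals. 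Your tracking of the mass bound from Remark \ref{rem} to control the trace measure $T\wedge dd^c\Theta$ uniformly in $(f_N^2,\dots,f_N^k)$ is the right ingredient for the final Fubini.

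One point you should flag more explicitly: both Fubini steps require \emph{absolute} integrability of $\log|f_N^1(z)|$ against $d\sigma_N$ (uniformly on $supp\,\Theta$), whereas condition $(A1)$ as literally stated only bounds $\sup_u|\int\log|\la a,u\ra|d\sigma_N(a)|$, not $\sup_u\int|\log|\la a,u\ra||\,d\sigma_N(a)$. This is not a new gap you have introduced --- the paper's own proof of Proposition \ref{helpex} invokes Fubini under the same implicit strengthening, and the concrete ensembles covered (Propositions \ref{iid} and \ref{moderate}) do satisfy the stronger $L^1$ bound --- but a careful write-up should say so. Apart from that caveat, the argument is sound.
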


Now, we are ready to prove Theorem \ref{main}.
\begin{proof}[Proof of Theorem \ref{main}]
 Note that for every continuous $(m-1,m-1)$ form $\Theta$ with compact support in $\T$
$$ |\la Z_{f_N},\Theta\ra|\leq \la Z_{f_N},\omega^{m-1}\ra \|\Theta\|_{\infty}\leq MV_m(P_1,\Sigma,\dots,\Sigma)\|\Theta\|_{\infty}$$ by approximating $\Theta$ with smooth forms it is enough to consider test forms on $\T.$  

\par We prove the theorem by induction on bidegrees. The case $k=1$ was obtained in Proposition \ref{helpex}.

\par Let us denote
 \begin{equation}\label{an}
 \alpha^j_N:=\frac{1}{2N}dd^c\log S^j_N(z,z)
 \end{equation} 
where $S_N^j(z,w)$ is the Bergman kernel for $Poly(NP_j).$ We claim that for every test form $\Theta\in\mathcal{D}_{m-k,m-k}(\T)$
$$\frac{1}{N^k}\la \Bbb{E}[Z_{f^1,\dots,f^k}],\Theta\ra=\la \bigwedge_{j=1}^k\alpha^j_N,\Theta\ra+ \langle T_N^k,\Theta\rangle$$ where $T_N^k$ is a real closed $(k,k)$ current such that $T^k_N\to0$ weakly as $N\to \infty.$
Assume that the the claim holds for $k-1.$ By Bertini's theorem for generic $f^k_N\in Poly(NP_k)$ its zero locus $Z_{f^k_N}$ is smooth and has codimension one in $\T.$ Then, using the notation in Proposition \ref{helpex} and by applying induction hypothesis
\begin{eqnarray*}
 \frac{1}{N^k}\int_{Z_{f_N^k}}\la [Z_{f_N^1,\dots,f_N^{k-1}}],\Theta\ra d\sigma_N(f_N^1)\dots d\sigma_N(f_N^{k-1}) &=&\frac{1}{N^k} \int_{Z_{f^k_N}}\la \Bbb{E}[Z_{f_N^1,\dots,f_N^{k-1}}],\Theta\ra  \\
 &=& \int_{Z_{f^k_N}} (\bigwedge_{j=1}^{k-1}\alpha_N^{j} \wedge \Theta + \la T^{k-1}_N,\Theta\ra) 
\end{eqnarray*}
where 
\begin{eqnarray*}
|\la (T^{k-1}_N)_{|_{Z_{f^k_N}}},\Theta_{|_{Z_{f^k_N}}}\ra| &\leq&  \|T^{k-1}_N\|\|dd^c \Theta_{|_{Z_{f^k_N}}}\|_{\infty}\\ 
&\leq& \|T^{k-1}_N\|\|dd^c\Theta\|_{\infty}\int_{Z_{f^k_N}}\omega^{m-1}\\
&\leq&  \|T^{k-1}_N\|\|dd^c\Theta\|_{\infty}MV_m(P_k,\Sigma,\dots,\Sigma)
\end{eqnarray*}
and $\|T^{k-1}_N\|$ denotes the mass of $T^{k-1}_N.$ Now, taking the average over $f^k_N\in Poly(NP_k)$ and using the estimate for the case $k=1$ we obtain
\begin{eqnarray*}
\frac{1}{N^k}\la \Bbb{E}[Z_{f_N^1,\dots,f_N^k}],\Theta\ra 
&=& \la \bigwedge_{j=1}^k \alpha^j_N,\Theta\ra+ \la T_N^1, \bigwedge_{j=1}^{k-1}\alpha_N^{j} \wedge \Theta\ra +\int_{Poly(NP_k)}\la (T^{k-1}_N)_{|_{Z_{f^k_N}}},\Theta_{|_{Z_{f^k_N}}}\ra d\sigma_N(f^k_N)\\
&=& \la \bigwedge_{j=1}^k\alpha^j_N,\Theta\ra+ C_{\Theta,N}
\end{eqnarray*}
where  $$ C_{\Theta,N} = \la T_N^1, \bigwedge_{j=1}^{k-1}\alpha_N^{j} \wedge \Theta\ra +\int_{Poly(NP_k)}\la (T^{k-1}_N)_{|_{Z_{f^k_N}}},\Theta_{|_{Z_{f^k_N}}}\ra d\sigma_N(f^k_N)$$ then by Proposition \ref{helpex} we have 
\begin{eqnarray*}
|C_{\Theta,N}| &\leq& |\la T_N^1, \bigwedge_{j=1}^{k-1}\alpha_N^{j} \wedge \Theta\ra| +|\int_{Poly(NP_k)}\la T^{k-1}_N,\Theta_{|_{Z_{f^k_N}}}\ra d\sigma_N(f^k_N)|\\
&\leq& \|T^1_N\|\|dd^c\Theta\|_{\infty} MV_m(P_1,\dots,P_{k-1},\Sigma,\dots,\Sigma) +\|T^{k-1}_N\|\|dd^c{\Theta}\|_{\infty}MV_m(P_k,\Sigma,\dots,\Sigma).
\end{eqnarray*}
 Thus, the assertion follows from the above estimate, induction hypothesis and the uniform convergence of Bergman kernels to weighted global extremal function (Proposition \ref{BS}) together with a theorem of Bedford and Taylor \cite{BT2} on convergence of Mong\'e-Ampere measures along uniformly convergent sequences of psh functions.
 \end{proof}

\section{Self-averaging}\label{ae}
 In this section we prove Theorem \ref{sa}. Let $\Bbb{P}^m$ denote the complex projective space and $\omega_{FS}$ is the Fubini-Study form. We also let $dV$ denote the volume form induced by $\omega_{FS}.$ Recall that an usc function $\varphi\in L^1(\Bbb{P}^m,dV)$ is called $\omega_{FS}$-psh if $\omega_{FS}+dd^c\varphi\geq0$ in the sense of currents. It is well know that (see eg \cite{DemBook}) there is a 1-1 correspondence between Lelong class psh function $\mathcal{L}(\C)$ and the set of $\omega_{FS}$-psh functions which is given by the natural identification
\begin{equation}\label{LL}u\in \mathcal{L}(\C) \to \varphi(z):=
\begin{cases}
u(z)-\frac{1}{2}\log(1+\|z\|^2) & \text{for}\ z\in\C\\
\limsup_{w\in\C\to z}  u(w)-\frac{1}{2}\log(1+\|w\|^2) & \text{for}\ z\in H_{\infty}
\end{cases}  
 \end{equation}  where $\Bbb{P}^m=\C\cup H_{\infty}$ and $H_{\infty}$ denotes the hyperplane at infinity.
Note that since $\Bbb{P}^m$ is compact there is no global psh functions other than the constant ones. On the other hand, we can associate each $\omega_{FS}$-psh function $\varphi$ to its curvature current $\omega_{FS}+dd^c\varphi$ which yields compactness properties of $\omega_{FS}$-psh functions. We use the later properties quite often in this section.  In addition, working in the compact setting makes the usage of integration by parts more simple since there is no boundary.\\ \indent
  We denote the hyperplane bundle $L\to \Bbb{P}^m$ by $L:=\mathcal{O}(1)$ which is endowed with the Fubini-Study metric $h_{FS}$ In the sequel, we identify $\C$ with the affine piece in $\Bbb{P}^m.$ Then the elements of $H^0(\Bbb{P}^m,\mathcal{O}(N))$ can be identified with the homogenous polynomials in $m+1$ variables of degree $N.$ Thus, restricting them to $\Bbb{C}^m,$ we may identify $H^0(\Bbb{P}^m,\mathcal{O}(N))$ with the space of polynomials $Poly(N\Sigma)$ of total degree at most $N$ and the smooth metric $h_{FS}$ can be represented by the weight function $\frac12\log(1+\|z\|^2)$ on $\Bbb{C}^m.$ For each $s_N\in H^0(\Bbb{P}^m,\mathcal{O}(N))$ we let $\|s_N(z)\|_{Nh_{FS}}$ denote the point-wise norm of $s_N$ evaluated with respect to the metric $h_{FS}.$ Then by (\ref{LL}) for each $f_N\in Poly(N\Sigma)$ the function $\frac1N\log|f_N|$ can be naturally identified with $\frac1N\log\|s_N\|_{Nh_{FS}}.$

For $P\subset\R_{\geq0}$ denoting $p=\max\{p_1+\dots+p_m: (p_1,\dots,p_m)\in P\}$ (so that $P\subset p\Sigma$), we may identify $Poly(NP)$ with a subspace of $H^0(\Bbb{P}^m,\mathcal{O}(pN))$ and denote it by $\Pi_{NP}.$  The BM measure $\tau$ induce in inner product on the space $H^0(\Bbb{P}^m,\mathcal{O}(pN))$ defined by $$\|s_N\|^2:=\int_K\|s_N(z)\|^2_{pNh_{FS}}d\tau(z). $$ For a fixed ONB $\{S_j^N\}$ we also let 
$$\mathcal{S}_N(z,z)=\sum_{j=1}^{d_N}\|S^N_j(z)\|^2_{Nh_{FS}}$$ denote restriction of the associated Bergman kernel to diagonal. We remark that the Bergman kernel asymptotics generalize the current setting (see \cite[Proposition 2.9]{B6}). We can endow $\Pi_{NP}$ with $d_N$-fold product measure $\sigma_N$ and we endow the product space $\mathscr{P}=\prod_{N=1}^{\infty}\Pi_{NP}$   with the product measure $\mathcal{P}_{\infty}.$ Note that the elements of the probability space $(\mathscr{P}, \mathcal{P}_{\infty})$ are sequences of random holomorphic sections.
For each $s_N\in\Pi_{NP}$ denoting its zero divisor by $Z_{s_N},$ it follows form Poincar\'e-Lelong formula that $$[Z_{s_N}]=pN\omega_{FS}+dd^c\log\|s_N\|_{pNh_{FS}}.$$  
We remark that $[Z_{s_N}]$ coincides with the (unique) extension of the current of integration $dd^c\log|f_N|$ through the hyperplane at infinity $H_{\infty}.$ Finally,  by (\ref{LL}) the function $V_{P,K,q}$ also extends to a $p\omega_{FS}$-psh function on $\Bbb{P}^m$ which we denote by $V_{P,p\omega_{FS}}$ and define its curvature current by
$$T_{P,K,q}:=p\omega_{FS}+dd^cV_{P,p\omega_{FS}}.$$
  {\bf{Slicing and regularization of currents:}}
  Let $Y$ be a complex manifold of dimension $n$ and $\pi_Y:Y\times\Pm \to Y, \pi_{\Pm}:Y\times \Pm\to \Pm$ denote the projections onto the factors. Given a positive closed $(k,k)$ current $\U$ on $Y\times \Pm$ it follows from \cite{Federer} (se also \cite{DSgeom}) that the slices $\U_y:=\la\U,\pi_Y,y\ra$ exist for a.e. $y\in Y.$ The currents $\U_y$ (if it exists) is a positive closed $(k,k)$ current on $\{y\}\times\Pm.$ For instance, if $\U$ is a continuous form then $\U_y$ is just restriction of $\U$ on $\{y\}\times\Pm.$ We can identify $\U_y$ with a positive closed $(k,k)$ current on $\Pm$ whose mass is independent of $y$ \cite[Lemma 2.4.1]{DS11}.    \\ \indent
  Following \cite{DS11}, we say that the map $y\to \U_y$ defines a \textit{structural variety} in the set of positive closed $(k,k)$ currents on $\Pm.$ We also say that a structural variety is \textit{special} if the slice $\U_y$ exists for every $y\in Y$ and the map $y\to \U_y$ is continuous with respect to weak topology of currents. In this work, we will focus on the following special structural disc: Consider the holomorphic map
  $$H:Aut(\Pm)\times\Pm\to \Pm$$ defined by $H(\tau,z)=\tau^{-1}(z).$ Given a positive closed $(k,k)$ current $R$ on $\Pm$ we define $\U:=H^*(R).$ Then it is easy to see that the slice $\U_{\tau}=\tau_*(R)$ for each $\tau\in Aut(\Bbb{P}^m)$. This in particular implies that $\tau\to R_{\tau}$ is continuous and $\{R_{\tau}\}_{\tau}$ defines a special structural variety \cite[Proposition 2.5.1]{DS11}. \\ \indent
  We let $\Delta\subset \Bbb{C}$ denote the unit disc. We fix a holomorphic chart $Y$ for $Aut(\Pm)$ and denote the local holomorphic coordinates by $y$  where $\|y\|<1$ and $y=0$ corresponds to the identity map $id\in Aut(\Pm).$  We also let $\tau_y\in Aut(\Pm)$ denote the automorphism that correspond to local coordinate $y.$ Next, we fix a positive smooth function $\psi$ with compact support in $\{\|y\|<1\}$ such that $\int\psi(y) dy=1$ and define $\psi_{\theta}(y):=|\theta|^{-2n}\psi(\frac{y}{|\theta|})$ for $\theta\in\Delta.$ Note that $\psi_{\theta}(y)dy$ is an approximate identity for the Dirac mass at 0. Finally, we define the current $ \U\wedge  \pi_{Y}^*(\psi_{\theta}dy)$ by
\begin{eqnarray*} \la \U\wedge  \pi_{Y}^*(\psi_{\theta}dy),\Psi\ra: &=&\int \la \U_y,\Psi\ra \psi_{\theta}(y)dy\\
&=& \int \la \U_{\tau_{\theta y}},\Psi\ra \psi(y)dy
\end{eqnarray*}
  where $\Psi$ is a $(m-k,m-k)$ test form on $Y\times \Pm$
  Note that the slice of $\U\wedge  \pi_{Y}^*(\psi_{\theta}dy)$ can be identified with the current $R_{\theta}$ whose action on the $(m-k,m-k)$ test form $\Theta$ on $\Pm$ defined by 
  $$\la R_{\theta},\Theta\ra:=\int \la (\tau_y)_*R,\Theta\ra\psi_{\theta}(y)dy=\int \la (\tau_{\theta y})_*R,\Theta\ra \psi(y)dy $$ 
 by setting $\Psi=\pi_{\Pm}^*(\Theta).$  
 \begin{prop}\label{shc}
 Let $R$ be a positive closed $(k,k)$ current on $\Pm$ and $\Theta$ is a smooth $(m-k,m-k)$ form on $\Pm$ such that $dd^c\Theta\geq 0.$ Then
 \begin{itemize}
 \item[(i)] $R_{\theta}$ is a smooth positive $(k,k)$ form for $\theta\in\Delta^*.$ The current $R_{\theta}$ depends continuously on $R.$ Moreover, $R_{\theta}\to R$ weakly as $\theta \to 0.$
 \item[(ii)] There exists $C>0$ such that $|\la R_{\theta},\Theta\ra|\leq C\|\Theta\|_{\infty}\|R\|$ for every $\theta\in\Delta.$
 \item[(iii)] $\varphi(\theta):= \la R_{\theta},\Theta\ra$ is a continuous subharmonic function on $\Delta.$ 

 \end{itemize}
 \end{prop}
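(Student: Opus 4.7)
The plan is to work with two equivalent forms of the definition: $\la R_\theta, \Phi \ra = \la R, \Phi_\theta \ra$ with $\Phi_\theta(z) := \int \tau_y^* \Phi(z)\, \psi_\theta(y)\,dy$, which makes (i) and (ii) direct, and $\la R_\theta, \Theta \ra = \int \la R, \tau_{\theta y}^* \Theta \ra \psi(y)\,dy$, which is the form adapted to the subharmonicity analysis in (iii).

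For (i), the key observation is that for $\theta \in \Delta^*$ the averaging $\Phi \mapsto \Phi_\theta$ admits a smooth integral kernel $K_\theta(z,w)$ on $\Pm \times \Pm$: changing variables $w = \tau_y(z)$ (for fixed $z$) inside the integral and using that $y \mapsto \tau_y(z)$ is a local diffeomorphism on the chart turns $\psi_\theta$ into a smooth function of $(z,w)$. Pairing $R$ against this smooth kernel yields a smooth representative for $R_\theta$. Positivity is immediate from $(\tau_y)_* R \geq 0$ and $\psi_\theta \geq 0$; continuity in $R$ is inherited from the smoothness of $\Phi_\theta$; and $R_\theta \to R$ weakly as $\theta \to 0$ since $\psi_\theta(y)\,dy$ is an approximate identity at $y = 0$, $y \mapsto (\tau_y)_* R$ is weakly continuous, and $\tau_0 = \mathrm{id}$. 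For (ii), since $\mathrm{Aut}(\Pm)$ is connected each $\tau_y$ is homotopic to the identity, so $(\tau_y)_* R$ is cohomologous to $R$ on $\Pm$ and hence has mass $\|R\|$; the standard bound $|\la S, \Theta \ra| \leq C_m \|\Theta\|_\infty \|S\|$ for a positive closed $(k,k)$-current $S$ against a smooth $(m-k,m-k)$-form, integrated against the probability density $\psi$, then yields the desired estimate.

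For (iii), I would first show that for each fixed $y$ the function $f_y(\theta) := \la R, \tau_{\theta y}^* \Theta \ra$ is subharmonic on $\Delta$; then $\varphi(\theta) = \int f_y(\theta)\,\psi(y)\,dy$ is subharmonic as a non-negatively weighted average, and continuous by the joint continuity of $(\theta, y) \mapsto \la R, \tau_{\theta y}^* \Theta \ra$ (since $\tau_{\theta y}^* \Theta$ depends smoothly on $(\theta, y)$) combined with dominated convergence. To obtain subharmonicity of $f_y$, introduce the holomorphic map
\[
G : \Delta \times \Pm \to \Pm, \qquad G(\theta, z) := \tau_{\theta y}(z),
\]
with projections $\pi : \Delta \times \Pm \to \Delta$ and $q : \Delta \times \Pm \to \Pm$. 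Fiber integration over $\Pm$ gives $f_y = \pi_*(q^* R \wedge G^* \Theta)$. Since $\pi_*$ commutes with $dd^c$ and $q^* R$ inherits closedness from $R$ --- so the Leibniz cross-terms in $dd^c(q^* R \wedge G^* \Theta)$ vanish --- I obtain
\[
dd^c f_y \;=\; \pi_* \bigl( q^* R \wedge G^* dd^c \Theta \bigr).
\]
This is the fiber integration of the wedge of the positive closed current $q^* R$ with the positive smooth form $G^* dd^c \Theta$ (holomorphic pullback preserves the strong positivity of $dd^c \Theta$), hence a non-negative measure on $\Delta$; therefore $dd^c f_y \geq 0$.

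The main technical obstacle is justifying the slicing identity $f_y = \pi_*(q^* R \wedge G^* \Theta)$, the commutation $dd^c \pi_* = \pi_* dd^c$, and the positivity of the wedge on the product manifold. These are standard points in the Dinh--Sibony theory of currents on products: $G^* \Theta$ is smooth so its wedge with the positive closed current $q^* R$ is unambiguously defined; compactness of $\Pm$ ensures that $\pi$ is proper over compacta in $\Delta$, so the fiber integral is well-defined and commutes with $dd^c$; and strong positivity is preserved under holomorphic pullback.
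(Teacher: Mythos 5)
Your proof is correct and follows the same approach as the paper: part (i) is the content of the Dinh--Sibony regularization result which the paper cites directly from \cite{DS11}, part (ii) rests on the same observation that each $(\tau_y)_*R$ is cohomologous to $R$ and hence has fixed mass, and part (iii) uses the same fiber-integration and $dd^c$-commutation idea on a product with $\Pm$, the only cosmetic difference being that you fix $y$ first and fiber-integrate over $\Delta\times\Pm$, while the paper fiber-integrates over $Y\times\Pm$ to get a psh function $\Phi$ on $Y$ and then restricts to the line $\theta\mapsto\theta y$. If anything your final step of (iii) is slightly more carefully phrased, since you keep the average $\varphi(\theta)=\int f_y(\theta)\psi(y)\,dy$ explicit before invoking positivity-preservation of averages, where the paper abbreviates this to the single restriction $\Phi(\theta y)$.
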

 \begin{proof}
 Part (i) is proved in \cite[Proposition 2.1.6]{DS11}. Adding a large multiple of $\omega_{FS}$ to $\Theta$ we may assume that $0\leq \Theta\leq C\omega_{FS}$ for some $C>0.$ Since each $R_{\theta}$ is positive closed and its mass is independent of $\theta,$ (ii) follows. For part (iii) let $\Psi:=\pi_{\Pm}^*(\Theta)$ and observe that $\Phi=(\pi_Y)_*(\U\wedge \Psi) $ is of bidegree $(0,0)$ on $Y$ satisfying 
 $$dd^c\Phi=(\pi_Y)_* (\U\wedge dd^c\Psi) \geq 0. $$ This implies that $\Phi$ coincides with a psh function on $Y.$ Note that for fixed $y\in Y$ we have $\varphi(\theta)=\Phi(\theta y)$ for $\theta\in \Delta$ thus $\varphi$ is subharmonic. Continuity follows from (i).
 \end{proof}

\begin{proof}[Proof of Theorem \ref{sa}]
   The proof is based on induction. 
   
\par   \textbf{Case $k=1:$} It is enough to show that $\frac1N\log|f_N(z)| \to V_{P,K,q} $ in $L^1_{loc}(\T).$ First observe that for every $\epsilon>0$ by $(A2)$ and Borel-Cantelli lemma there exists a set $\mathscr{A}\subset \mathscr{P}$ of probability one such that for every sequence $\{f_N\}\in \mathscr{A}$ we have 
\begin{eqnarray*}\label{El}
\log|f_N(z)| &=& \log|\la a^N,u^N(z)\ra|+\frac12\log S_N(z,z)\\
&\leq & \epsilon N+ \frac12\log S_N(z,z)
\end{eqnarray*}
which implies that $$(\limsup_{N\to \infty}\frac1N\log|f_N(z)|)^*\leq V_{P,K,q}(z).$$ 
Note that by $(A3)$, Borel-Cantelli lemma and Proposition \ref{BS} for every $z\in \T$ there exists a set $\mathscr{A}_z\subset \mathscr{P}$ of probability one such that for every $\{f_N\}\in \mathscr{A}_z$ 
\begin{equation}\label{Er}
\liminf_{N\to \infty}\frac1N\log|f_N(z)|\geq V_{P,K,q}(z).
\end{equation} Next, we fix a countable dense subset $z_k\in \T$ and define 
$\mathscr{B}:=\mathscr{A}\cap (\cap_{k=1}^{\infty}\mathscr{A}_{z_k}).$ Clearly, $\mathscr{B}$ has probability one. 
To finish the proof let $\{f_N\}\in\mathscr{B}$ and we assume on the contrary that $\frac1N\log|f_N(z)| \not\to V_{P,K,q} $ in $L^1_{loc}(\T).$ Then there exist a subsequence $f_{N_k}$ and open set $U\Subset \T$ such that $\|f_{N_k}-V_{P,K,q}\|_{L^1(U)}>\epsilon.$ Since $V_{P,K,q}$ is locally bounded above so is $\frac1N\log|f_{N_k}|.$ Then by Hartogs Lemma either $\frac1N\log|f_{N_k}|$ converges uniformly to $-\infty$ or it has a subsequence that converges in $L^1(U).$ If the former occurred than there would exists $n_0\in \Bbb{N}$ such that for $N\geq n_0$ and $z\in U$
$$\frac1N\log|f_N(z)|\leq V_{P,K,q}(z).$$
However, this contradicts  (\ref{Er}). Hence, there exists a subsequence such that $\frac{1}{N_k}\log|f_{N_k}| \to v$ in $L^1(U).$ Then by (\ref{El}) we have $v^*$ is psh,
$v^*\leq V_{P,K,q}$ on $U$ and $v^*\not=V_{P,K,q}.$ Since $V_{P,K,q}$ is continuous the set $U':=\{z\in U: v^*(z)<V_{P,K,q}(z)\}$ is an open set. Hence there exists $z_k\in U'$ but this contradicts (\ref{Er}). 

\par \textbf{Case $k>1$:} We assume that the the claim holds for $k-1.$ By Bertini's theorem for generic $f^k_N\in Poly(NP_k)$ their zero loci $Z_{f^k_N}$ are smooth and intersect transversally. In particular, denoting $\textbf{f}_N^k:=(f_N^1,\dots,f_N^k),$ the current of integration$[Z_{\textbf{f}^k_N}]$ has locally finite mass and 
$$[Z_{\textbf{f}^k_N}]=[Z_{f_N^1}]\wedge[Z_{f_N^2,\dots,f_N^{k}}].$$
Let $\Phi$ be a smooth $(m-k,m-k)$ form on $\Pm.$ Writing the test form $\Phi$ as $\Phi=\Phi^+-\Phi^-$ for some smooth forms $\Phi^{\pm}$ where $dd^c\Phi^{\pm}\geq0$ we may and we do assume that $dd^c\Phi\geq 0.$ We also denote by $[Z_{f_N^2,\dots,f_N^k}]_{\theta}$ the $\theta$-regularization of the current of integration $[Z_{f_N^2,\dots,f_N^k}].$ It follows from Proposition \ref{shc} that
$$u_N(\theta):= \frac{1}{N^k}\la [Z_{f^1_N}]\wedge [Z_{f_N^2,\dots,f_N^k}]_{\theta}, \Phi\ra= \frac{1}{N^k}\la  [Z_{f_N^2,\dots,f_N^k}]_{\theta},  [Z_{f^1_N}]\wedge \Phi\ra$$
defines a continuous subharmonic function on $\Delta.$ Moreover, by $(A2)$, Borel-Cantelli lemma and Cauchy-Schwarz inequality we have
\begin{eqnarray*}
u_N(\theta) &=& \frac{1}{N^k} \la  [Z_{f_N^2,\dots,f_N^k}]_{\theta}, pN\omega_{FS} \wedge \Phi\ra+ \frac{1}{N^k} \la  [Z_{f_N^2,\dots,f_N^k}]_{\theta}, \log\|s^1_N\|_{pNh_{FS}} \Phi\ra \\
&\leq & \frac{1}{N^k} \la  [Z_{f_N^2,\dots,f_N^k}]_{\theta}, pN\omega_{FS} \wedge \Phi\ra+\frac{ \epsilon}{N^{k-1}}  \la  [Z_{f_N^2,\dots,f_N^k}]_{\theta}, \Phi\ra + \frac{1}{N^k} \la  [Z_{f_N^2,\dots,f_N^k}]_{\theta}, \log \sqrt{\mathcal{S}_N(z,z)},\Phi\ra.
\end{eqnarray*}
Then by \cite[Proposition 4.2.6]{DS11}, induction hypothesis and uniform convergence of Bergman functions $\mathcal{S}_N(z,z)$ implies that
$$(\limsup_{N\to \infty}u_N(\theta))^*\leq v(\theta):=\la T_{P,K,q}\wedge (T^{k-1}_{P,K,q})_{\theta}, \Phi\ra\ \text{for}\ \theta\in \Delta$$ where $(T^{k-1}_{P,K,q})_{\theta}$ denotes $\theta$-regularization of $T^{k-1}_{P,K,q}.$ 
In particular, 
$$\limsup_{N\to \infty}\la\frac{1}{N^k} [Z_{\textbf{f}^k_N}],\Phi\ra\leq \la T^k_{P,K,q},\Phi\ra.$$ 
On the other hand, $ [Z_{f_N^2,\dots,f_N^k}]_{\theta}$ is a smooth positive current and since $\frac1N[Z_{f_N^1}]\to T_{P,K,q}$ weakly by Proposition \ref{shc} we have 


\begin{equation}\label{son}
\lim_{N\to \infty}u_N(\theta) =v(\theta)\ \text{for every}\ \theta\in \Delta^*.
\end{equation}
We claim that the equality holds on $\Delta.$ Indeed, if not then there exists a subsequence $N_k$ and a subharmonic function $\varphi$ such that $u_{N_k}\to \varphi$ in $L^1_{loc}(\Delta)$ and
$$\varphi(0)=(\limsup_{N_k\to \infty}u_{N_k}(0))^*<v(0).$$ By above argument $\varphi(\theta)\leq v(\theta)$ for $\theta\in \Delta.$ Hence, by continuity of $v$ the set  
$$\mathcal{O}:=\{\theta\in \Delta: \varphi(\theta)<v(\theta)\}$$ is open. But this contradicts (\ref{son}).
 
   \end{proof}
\section{Unbounded case}
In this section, we obtain generalizations of Theorem \ref{main} and \ref{sa} for certain unbounded closed subsets $K\subset \T.$ Throughout this section we assume that $P\subset \R_{\geq0}$ is an integral polytope with non-empty interior. In the sequel we let $p:=\max\{p_1+\dots+p_m: (p_1,\dots,p_m)\in P\}$ so that $P\subset p\Sigma.$  \\ \indent
 A lower semi-continuous function $q:\C\to\Bbb{R}$ for which $\{z\in K: q(z)<\infty\}$ is non-pluripolar, is called \textit{weakly admissible} if there exists $M\in (-\infty,\infty)$ such that
$$\liminf_{z\in K, \|z\|\to\infty}q(z)-\frac{p}{2}\log(1+\|z\|^2) =M.$$
We say  that  $q$ is a \textit{continuous weakly admissible weight} function for $K$ if it is weakly admissible and it extends to a continuous $p\omega_{FS}$-psh function. In particular, $q$ induces of a continuous metric on $\mathcal{O}(p).$ A weighted closed set $(K,q)$ is called \textit{regular weighted closed set} if the global extremal function $V_{P,K,q}$ extends to a continuous $p\omega_{FS}$-psh function on $\Bbb{P}^m.$ 
We remark that if $q$ is a weakly admissible weight function for $K=\T$ then the set of polynomials $Poly(NP)\subset L^2(e^{-2Nq}dV)$ where $dV=h(z)dz$ denotes a probability volume form on $\C$ (eg. $dV=\frac{1}{m!}\omega_{FS}^m$). 
Then Theorem \ref{extremal} carries over to the present setting and we obtain:
\begin{thm}
Let $P\subset \R_{\geq0}$ be an integral polytope with non-empty interior, $(K,q)$ be a regular weighted closed set and $q:\C\to\Bbb{R}$ be weakly admissible continuous weight function. Then
$$V_{P,K,q}=\lim_{N\to\infty}\frac{1}{N}\log\Phi_N$$ locally uniformly on $\T.$
\end{thm}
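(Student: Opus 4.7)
The plan is to adapt the proof of Theorem~\ref{extremal} from the compact to the unbounded setting, using the projective compactification $\Bbb{P}^m$ to substitute for the missing compactness of $K$. By the definition of a regular weighted closed set, $V := V_{P,K,q}$ extends to a continuous $p\omega_{FS}$-psh function on $\Bbb{P}^m$; by continuous weak admissibility, $q$ does as well. The upper bound $\frac{1}{N}\log \Phi_N \leq V$ on $\T$ is immediate: for any admissible $f_N \in Poly(NP)$, the function $\frac{1}{N}\log|f_N|$ lies in $\mathcal{L}_P$ and is bounded above by $q$ on $K$, so it is dominated by $V_{P,K,q}$.

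For the reverse inequality, fix a compact set $X\subset\T$, a point $z_0\in X$, and $\epsilon>0$. The H\"ormander $L^2$-estimate step of Theorem~\ref{extremal} carries over essentially verbatim: regularizing $V$ by $V_t:=\varrho_t\star V$ on a pseudoconvex exhaustion of $\T$, solving a $\bar{\partial}$-equation with the same auxiliary weight $\psi_N$ as in the compact case, and extracting a subsequential limit, one obtains $f_N\in \mathcal{O}(\T)$ with $f_N(z_0)=1$ and
$$\int_{\T}|f_N|^2\,e^{-2N(V-\epsilon/2)}(1+\|z\|^2)^{-r}\,dz \;\leq\; C\,e^{-2N(V(z_0)-\epsilon)}.$$
Since $V\in\mathcal{L}_{P,+}$, Proposition~\ref{poly} forces $f_N\in Poly(NP)$.

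The genuinely new step is the sub-mean value bound on $K$, which in the compact case used a uniform collar $K_\rho\Subset\T$. In the present setting one exploits that both $V$ and $q$ extend to continuous functions on the compact manifold $\Bbb{P}^m$: the difference $q-V$ is then uniformly continuous on $\Bbb{P}^m$, and in affine coordinates the Fubini-Study balls around points of $K$ have uniformly comparable Euclidean size. Choosing a uniform $\rho>0$ so that $|q(y)-q(z)|<\epsilon/2$ and $q(y)>V(y)-\epsilon/2$ on each such ball, the sub-mean value inequality applied to the subharmonic function $|f_N|^2$ yields
$$|f_N(z)|^2\, e^{-2Nq(z)} \;\leq\; C_4\, e^{-2N(V(z_0)-\epsilon)}$$
uniformly in $z\in K$ and in $N$. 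Normalizing $F_N:=\sqrt{C_r/C_4}\,e^{N(V(z_0)-\epsilon)}f_N$ places $F_N$ in the admissible class for $\Phi_N$ with $\frac{1}{N}\log|F_N(z_0)|\geq V(z_0)-\epsilon+o(1)$, and the passage from pointwise to locally uniform convergence on $X$ follows from submultiplicativity $\Phi_N\cdot\Phi_M\leq\Phi_{N+M}$ exactly as in the compact case.

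The main obstacle is this uniform sub-mean value step: without compactness of $K$ one cannot directly thicken $K$ inside $\T$, and a sequence in $K$ may escape to $\partial\T$ inside $\Bbb{P}^m$. The precise role of weak admissibility, together with the continuous extension of $q$ as a $p\omega_{FS}$-psh metric on $\mathcal{O}(p)$, is to transfer the required equicontinuity of $q$ and $V$ to the compact ambient manifold $\Bbb{P}^m$, thereby making the radius $\rho$ uniform in $z\in K$ despite the unboundedness of $K$ inside $\T$; once this is in hand, the remaining pluripotential arguments from Theorem~\ref{extremal} proceed unchanged.
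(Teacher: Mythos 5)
The paper gives no proof of this theorem --- it simply asserts that the argument of Theorem~\ref{extremal} ``carries over'' --- so you are right to try to flesh it out, and your high-level picture (the only genuinely new step is the sub-mean-value estimate, and the relevant compactness is supplied by $\Bbb{P}^m$) is correct. However, two points in your execution are problematic, and the second is a real gap.

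First, the intermediate claim that ``in affine coordinates the Fubini-Study balls around points of $K$ have uniformly comparable Euclidean size'' is false and in fact backwards: the Fubini--Study metric at $z\in\C$ is comparable to $(1+\|z\|^2)^{-1}$ times the Euclidean metric, so a fixed FS-geodesic ball around $z$ has Euclidean radius growing like $1+\|z\|^2\to\infty$. What is actually true (and what you need) is the reverse: Euclidean balls $B(z,\rho)$ of fixed radius have FS-radius tending to $0$ as $\|z\|\to\infty$, so the uniform continuity of $q-V$ on $\Bbb{P}^m$ does transfer to a uniform bound on the oscillation of $q$ and $V$ over $B(z,\rho)$, $z\in K$. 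Your argument goes through once this is stated the right way around, but as written the claim is wrong.

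Second, and more seriously, the assertion that ``the remaining pluripotential arguments from Theorem~\ref{extremal} proceed unchanged'' once equicontinuity is available is not correct. In the compact proof one multiplies the sub-mean-value bound by $C_r:=\min_{z\in K_\rho}(1+|z|^2)^{-r}$, a positive constant because $K_\rho$ is compact. When $K$ is unbounded (e.g.\ $K=\T$) this constant is $0$, so the inequality $C_r|f_N(z)|^2e^{-2Nq(z)}\le\cdots$ becomes vacuous; equivalently, the factor $\sup_{y\in B(z,\rho)}(1+|y|^2)^r\sim(1+\|z\|^2)^r$ that must be absorbed when passing from the weighted $\bar\partial$-estimate to a pointwise bound on $K$ is unbounded. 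Equicontinuity of $q$ and $V$ controls the exponential factors $e^{\pm 2N(\cdot)}$ but does nothing for this polynomial Jacobian factor. To close the argument one has to genuinely rework the sub-mean-value step in the projective picture --- for example by covering $\Bbb{P}^m$ by finitely many affine charts, carrying out the sub-mean-value estimate in each chart against the bounded local trivializing potentials of the metric on $\mathcal{O}(pN)$, and gluing --- which is precisely where the hypothesis that $q$ extends to a continuous metric on $\mathcal{O}(p)$ is used in an essential way. Your proof names that hypothesis but never actually deploys it to repair the step that breaks.
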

Next, we fix an ONB $\{F_N^j\}$ for $Poly(NP)$ with respect to the inner product induced from 
$$\la f,g\ra:=\int_{\T} f(z)\overline{g(z)}e^{-2Nq(z)}dV .$$  We also let $S_N(z,w)$ denote the associated Bergman kernel (cf. \cite[\S 1.1]{B6}). We remark that volume form $dV$ satisfies the weighted Berstein-Markov inequality on $\T$ and the argument in \cite{BloomS} (see also \cite[Proposition 4.2]{SZ1}) generalizes to our setting and we obtain: 
\begin{prop}
Let $P\subset\R_{\geq0}$ be an integral polytope with non-empty interior, $(K,q)$ be a regular weighted  closed set and $q:\C\to\Bbb{R}$ be weakly admissible continuous weight function. Then $$\frac{1}{2N}\log S_N(z,z)\to V_{P,K,q}$$ uniformly on compact subsets of $\T.$
\end{prop}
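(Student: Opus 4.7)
My plan is to prove matching upper and lower bounds on $\frac{1}{2N}\log S_N(z,z)$ and then upgrade pointwise convergence to locally uniform convergence via Hartogs-type arguments. The key input is the extended Theorem~\ref{extremal} (stated just before the proposition) which gives locally uniform convergence $\frac{1}{N}\log\Phi_N\to V_{P,K,q}$ on $\T$. The strategy is simply to compare the weighted $L^2$-extremal function $S_N(z,z)^{1/2}$ with the weighted $L^\infty$-extremal function $\Phi_N$, using the weighted Bernstein-Markov property of $dV$ in one direction and a direct $L^\infty \Rightarrow L^2$ bound in the other. This is precisely the scheme carried out in \cite[Lemma 3.4]{BloomS} and \cite[Proposition 4.2]{SZ1}, which I will adapt to the current setting.

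For the upper bound, I would use the extremal characterization
$$S_N(z,z)^{1/2}=\sup\{|f(z)|:f\in Poly(NP),\ \|f\|_{L^2(e^{-2Nq}dV)}\leq 1\}.$$
The volume form $dV$ is a weighted Bernstein-Markov measure for $(P,K,q)$: there exist constants $M_N>0$ with $M_N^{1/N}\to 1$ such that $\max_K|fe^{-Nq}|\leq M_N\|f\|_{L^2(e^{-2Nq}dV)}$ for all $f\in Poly(NP)$. Given any $f$ in the $L^2$ unit ball, $f/M_N$ then lies in the admissible class defining $\Phi_N$, hence $|f(z)|\leq M_N\Phi_N(z)$. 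Taking the supremum over $f$ and applying Theorem~\ref{extremal} yields $\frac{1}{2N}\log S_N(z,z)\leq \frac{1}{N}\log M_N+\frac{1}{N}\log\Phi_N(z)\to V_{P,K,q}(z)$ locally uniformly on $\T$.

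For the lower bound, fix $z_0\in\T$ and $\epsilon>0$. Theorem~\ref{extremal} produces $f_N\in Poly(NP)$ with $\max_K|f_N|e^{-Nq}\leq 1$ and $|f_N(z_0)|\geq e^{N(V_{P,K,q}(z_0)-\epsilon)}$. Since $|f_N|^2e^{-2Nq}\leq 1$ pointwise on $K$ and $dV$ is a probability volume form, one has $\|f_N\|_{L^2(e^{-2Nq}dV)}^2\leq 1$. The extremal property of the Bergman kernel therefore gives $S_N(z_0,z_0)^{1/2}\geq|f_N(z_0)|\geq e^{N(V_{P,K,q}(z_0)-\epsilon)}$, so $\liminf_N\frac{1}{2N}\log S_N(z_0,z_0)\geq V_{P,K,q}(z_0)-\epsilon$, and $\epsilon$ is arbitrary.

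Finally, to upgrade to locally uniform convergence, observe that $z\mapsto \frac{1}{2N}\log S_N(z,z)$ is psh on $\T$ and locally uniformly bounded above (by the upper bound). Since the pointwise limit $V_{P,K,q}$ is continuous (by hypothesis of regular weighted closed set) and the upper bound already holds locally uniformly, a standard Hartogs-lemma argument converts the pointwise lower bound into a locally uniform one: any failure would produce, by taking a subsequence and an $L^1_{loc}$ limit, a psh function $\leq V_{P,K,q}$ equal to $V_{P,K,q}$ almost everywhere, hence equal to $V_{P,K,q}$ by continuity. The main technical hurdle is the unbounded nature of $K$, which is handled at two distinct points: the BM inequality for $dV$ on the noncompact set $\T$ relies on the weak admissibility of $q$ to guarantee integrability (this is where the argument of \cite{BloomS}, \cite[Prop.~4.2]{SZ1} must be transferred to the sparse/$P$-polytope setting), and the lower-bound estimate uses that $dV$ is a probability measure so that sup-norm control on $K$ gives automatic $L^2$ control without polynomial blow-up.
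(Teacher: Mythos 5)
Your scheme---compare the $L^2$-extremal $S_N(\cdot,\cdot)^{1/2}$ with the $L^\infty$-extremal $\Phi_N$ in both directions via Bernstein--Markov, then feed in Theorem~\ref{extremal}---is exactly what the paper intends; it defers to \cite[Lemma~3.4]{BloomS} and \cite[Proposition~4.2]{SZ1}, and your outline reconstructs that argument. Two remarks, one a genuine flaw in a step, one a caveat.

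The closing ``Hartogs'' step is both unnecessary and, as formulated, does not close the argument. Your lower bound is already locally uniform for free: any $f\in Poly(NP)$ with $\max_K|f|e^{-Nq}\leq 1$ has $\|f\|^2_{L^2(e^{-2Nq}dV)}\leq 1$ (since $dV$ is a probability measure), hence $\Phi_N(z)\leq S_N(z,z)^{1/2}$ for \emph{every} $z\in\T$ simultaneously, and the locally uniform lower bound on $\frac{1}{2N}\log S_N(z,z)$ follows directly from the locally uniform convergence $\frac{1}{N}\log\Phi_N\to V_{P,K,q}$ supplied by Theorem~\ref{extremal}. There is no need to pass to a pointwise statement and then try to upgrade. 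Moreover the Hartogs argument you sketch points in the wrong direction: Hartogs' lemma controls the $\limsup$ of an $L^1_{loc}$-convergent psh sequence \emph{from above}, so concluding that the $L^1_{loc}$-subsequential limit equals $V_{P,K,q}$ does not contradict the hypothesized failure $u_{N_k}(z_{N_k})<V(z_{N_k})-\delta$ --- a psh sequence converging in $L^1_{loc}$ to a continuous function may still dip far below it on shrinking exceptional sets near $z_0$.

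A caveat on the estimate $\|f_N\|^2_{L^2(e^{-2Nq}dV)}\leq 1$: the inner product in \S5 integrates over all of $\T$, while the sup-norm constraint is imposed only on $K$. The inference is immediate when $K=\T$ (the setting of the remark just before the proposition, of Example~\ref{SZex}, and of \cite{SZ1}), or more generally when $V_{P,K,q}\leq q$ on all of $\T$; for a general closed $K\subsetneq\T$ one would need to argue separately that $\int_{\T\setminus K}|f_N|^2e^{-2Nq}\,dV$ grows at most sub-exponentially (e.g.\ via $|f_N|\leq e^{NV_{P,K,q}}$ on $\T$). The paper's own one-line proof glosses over this as well, so this is not a defect of your write-up relative to the paper, but it should be flagged.
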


Hence, following the arguments in proofs of Theorem \ref{main} and Theorem \ref{sa} we obtain:
\begin{thm}\label{unbounded}
Let $P_j\subset \R_{\geq0}$ be an integral polytope with non-empty interior, $(K,q_j)$ be a regular weighted  closed set and $q_j:\C\to\Bbb{R}$ be weakly admissible continuous weight function for each $1\leq j\leq k$.  If condition $(A1)$ holds then
$$N^{-k}\Bbb{E}[Z_{f^1_N,\dots,f^k_N}] \to dd^c(V_{P_1,K,q_1})\wedge\dots \wedge dd^c(V_{P_k,K,q_k})$$ weakly as $N\to \infty.$ 

Moreover, if $(A2)$ and $(A3)$ hold then almost surely
$$N^{-k}Z_{f^1_N,\dots,f^k_N} \to dd^c(V_{P_1,K,q_1})\wedge\dots \wedge dd^c(V_{P_k,K,q_k})$$ weakly as $N\to \infty.$ 
\end{thm}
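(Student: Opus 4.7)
The plan is to mimic the proofs of Theorems \ref{main} and \ref{sa} in this unbounded setting, with the two new analytic inputs being the Siciak--Zaharyuta theorem and the Bergman kernel asymptotics just established. The essential structural move is to exploit $P_j \subset p\Sigma$ (which holds because $P_j \subset \R_{\geq 0}$) to identify $Poly(NP_j)$ with a subspace $\Pi_{NP_j}\subset H^0(\Bbb{P}^m,\mathcal{O}(pN))$. Each zero current then extends across $H_\infty$ to a positive closed current on $\Bbb{P}^m$, and by Poincar\'e--Lelong
$$[Z_{f_N^j}] = pN\,\omega_{FS} + dd^c\log\|s_N^j\|_{pN h_{FS}},$$
so the whole argument can be run on the compact manifold $\Bbb{P}^m$, bypassing the global mass issues that non-compactness of $K$ would otherwise raise.

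For the expected zero current under $(A1)$, I would first prove the analog of Proposition \ref{helpex}. Writing $f_N=\la a^N,(F_j^N)\ra$ and $u^N(z)=(F_j^N(z)/\sqrt{S_N(z,z)})_j\in S^{2d_N-1}$, Fubini gives
$$\tfrac{1}{N}\la\Bbb{E}[Z_{f_N}],\Theta\ra=\tfrac{1}{2N}\la dd^c\log S_N(z,z),\Theta\ra+\la T_N,\Theta\ra,$$
with $|\la T_N,\Theta\ra|\leq N^{-1}\|dd^c\Theta\|_\infty\sup_{u\in S^{2d_N-1}}|\int\log|\la a,u\ra|\,d\sigma_N|$, which tends to zero by $(A1)$. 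The Bergman asymptotic $\tfrac{1}{2N}\log S_N(z,z)\to V_{P,K,q}$ proved above then yields codimension one. The induction on $k$ is exactly as in Theorem \ref{main}: for generic $f_N^k$ slice by $Z_{f_N^k}$ (mass controlled by $MV_m(P_k,\Sigma,\dots,\Sigma)$ via Remark \ref{rem}), apply the induction hypothesis on the slice, average over $f_N^k\in Poly(NP_k)$, and close up via Bedford--Taylor continuity of mixed Monge--Amp\`ere under uniform convergence of psh functions.

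For the almost sure statement under $(A2)$ and $(A3)$, the case $k=1$ follows the scheme of Theorem \ref{sa}: $(A2)$ plus Borel--Cantelli gives a full-measure set $\mathscr{A}$ on which $\tfrac{1}{N}\log|f_N|\leq\epsilon+\tfrac{1}{2N}\log S_N(z,z)$, hence $(\limsup\tfrac{1}{N}\log|f_N|)^*\leq V_{P,K,q}$; $(A3)$ applied at each point of a countable dense $\{z_k\}\subset\T$ together with the Bergman asymptotics gives $\liminf\tfrac{1}{N}\log|f_N(z_k)|\geq V_{P,K,q}(z_k)$ almost surely, and continuity of $V_{P,K,q}$ combined with Hartogs' lemma forces $L^1_{loc}$ convergence. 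For $k>1$, I would transplant the Dinh--Sibony structural-disc argument from Section \ref{ae} verbatim onto $\Bbb{P}^m$: set $u_N(\theta):=N^{-k}\la[Z_{f_N^1}]\wedge[Z_{f_N^2,\dots,f_N^k}]_\theta,\Phi\ra$, observe it is continuous subharmonic on $\Delta$ by Proposition \ref{shc}, dominate it using $(A2)$ and Cauchy--Schwarz as in Section \ref{ae}, apply the induction hypothesis together with \cite[Proposition 4.2.6]{DS11} to get $(\limsup u_N(\theta))^*\leq v(\theta):=\la T_{P_1,K,q_1}\wedge(T^{k-1})_\theta,\Phi\ra$, verify equality on $\Delta^*$ using smoothness of the regularization, and then use continuity of $v$ to propagate the equality to $\theta=0$.

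The main obstacle is precisely the non-compactness of $K$, which on its own would destroy the pluripotential manipulations (finiteness of masses, integration by parts, control at infinity). Weak admissibility of $q_j$ is engineered exactly to neutralize this: it forces the weighted $L^2$ inner product to control polynomials at infinity, it guarantees that $V_{P_j,K,q_j}$ extends continuously as a $p\omega_{FS}$-psh function across $H_\infty$, and it places all relevant currents on the compact manifold $\Bbb{P}^m$ where masses are automatically finite and the slicing/regularization machinery applies globally. Once this transfer is in hand, the proofs of Theorems \ref{main} and \ref{sa} go through with only cosmetic adjustments.
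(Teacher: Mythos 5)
Your proposal is correct and follows the paper's own approach: establish the Siciak--Zaharyuta theorem and the Bergman kernel asymptotics in the weakly admissible setting, transfer the problem to $\Bbb{P}^m$ via $P_j\subset p\Sigma$ (so that zero currents extend across $H_\infty$ and masses are automatically finite), and then rerun the arguments of Theorems \ref{main} and \ref{sa} essentially verbatim. Your explicit unpacking of the Fubini/induction step and the Dinh--Sibony structural-disc step is exactly what the paper compresses into the phrase ``following the arguments in proofs of Theorem \ref{main} and Theorem \ref{sa}.''
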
   
Next, we provide an example (from \cite{SZ1}) which falls in the framework of Theorem \ref{unbounded}:
\begin{example}\label{SZex}
Let $P\subset \R_{\geq0}$ be an integral polytope with non-empty interior, $K=\T$ and $q(z)=\frac{p}{2}\log(1+\|z\|^2)$ where $p:=\max\{p_1+\dots+p_m: (p_1,\dots,p_m)\in P\}.$ For each $x\in P$ we denote the \textit{normal cone} to $P$ at $x$ by
$$C_x:=\{u\in\R:\la u,x\ra=\varphi_P(u)\}$$ where $\varphi_P$ is the support function of $P.$ Then by \cite[Lemma 4.3]{SZ1} for every $z\in \T$ there exists unique $\tau_z\in \R$ and $r(z)\in P$ such that 
$$\mu_p(e^{-\frac{\tau_z}{2}}\cdot z)=r(z)\ \text{and}\ \tau_z\in C_{r(z)}$$
where $x\cdot z:=(x_1z_1,\dots,x_mz_m)$ denotes $\R_+$ action on $\T$ and $\mu_P$ denotes the moment map defined in the introduction. Furthermore, by \cite[Theorem 4.1]{SZ1}
 \[
V_{P, p\omega_{FS}} (z)= \begin{cases}
0 & \text{for}\ z\in \mathcal{A}_P\\
\frac12\la r(z),\tau_z\ra-\frac{p}{2}\log[\frac{1+\|z\|^2}{1+\|e^{-\frac{\tau_z}{2}}\cdot z\|^2}] & \text{for}\ z\in\T\backslash \mathcal{A}_P
\end{cases}
\]
extends as a continuous $p\omega_{FS}$-psh function on $\Bbb{P}^m.$ In particular, the weighted global extremal function is given by

\begin{equation}\label{cform}
V_{P,q}(z) = \begin{cases}
\frac{p}{2}\log(1+\|z\|^2) & \text{for}\ z\in \mathcal{A}_P\\
\frac12\la r(z),\tau_z\ra+\frac{p}{2}\log[1+\|e^{-\frac{\tau_z}{2}}\cdot z\|^2] & \text{for}\ z\in\T\backslash \mathcal{A}_P.
\end{cases}
\end{equation}
Letting 
\begin{eqnarray*}
\la f ,g\ra: &=& \int_{\T} f(z)\overline{g(z)}e^{-2Nq(z)}\omega_{FS}^m
\end{eqnarray*}
we see that $$c_Jz^J:=(\frac{(N+m)!}{m!(N-|J|)!j_1!\dots j_m!})^{\frac12}z_1^{j_1}\dots z_m^{j_m} \ for\ J\in NP$$
(where $|J|=j_1+\dots+j_m$) form an ONB for $Poly(NP)$ and a random Laurent polynomial in this context is of the form
$$f_N(z)=\sum_{J\in NP}a_Jc_Jz^J.$$ Thus, Theorem \ref{unbounded} applies (with $P=P_1=P_2$) and almost surely
$$N^{-m}\sum_{\zeta\in Z_{f^1_N,\dots,f^m_N}}\delta_{\zeta}\to MA_{\Bbb{C}}(V_{P,q})\ \text{weakly as}\ N\to \infty $$ 
\end{example}

 \bibliographystyle{alpha}
\bibliography{biblio}
\end{document}